%
%
\documentclass[12pt]{amsart}
\usepackage{amsmath,amsthm,amsfonts,latexsym,amssymb,amscd,color}
\usepackage{chemarr}
\pagestyle{headings}
\setlength{\textwidth}{36true pc}
\setlength{\headheight}{8true pt} 
\setlength{\oddsidemargin}{0 truept}
\setlength{\evensidemargin}{0 truept}
\setlength{\textheight}{572true pt}
%
%
\newtheorem{thm}{Theorem}[subsection]
\newtheorem{cor}[thm]{Corollary}
\newtheorem{lm}[thm]{Lemma}

\newtheorem{clm}[thm]{Claim}
\newtheorem*{clm*}{Claim}
\theoremstyle{definition}
\newtheorem{df}[thm]{Definition}

\newtheorem{exmp}[thm]{Example}
\newtheorem{remark}[thm]{Remark}

\numberwithin{equation}{section}
\newcommand{\sprf}{\noindent{\it Proof.}} 
\newcommand{\sqed}{\hfill\rule{1.3mm}{3mm}\medskip}

\newcommand{\cproof}{\noindent{\it Proof of claim.}\ } 
\newcommand{\cqed}{\hfill\rule{1.3mm}{3mm}}

%
%

 
  



%
%
\DeclareMathOperator{\im}{im}         

%


%
\newcommand{\wec}[1]{{\mathbf{#1}}}  
\newcommand{\wrel}[1]{\;#1\;}     
%
%
%
%
%
\newcommand{\m}[1]{{\mathbf{\uppercase{#1}}}}
\newcommand{\vr}[1]{{\mathcal{\uppercase {#1}}}}
%

%



%
%
%
%

%

%

%

%
%
%
%
\newcommand{\bd}{\begin{description}}
\newcommand{\ed}{\end{description}}
\newcommand{\lb}{\langle} 
\newcommand{\rb}{\rangle}

%

%

\newcommand{\Aut}{\textrm{Aut}}
\newcommand{\Inn}{\textrm{Inn}}

\newcommand{\F}{{F}}

\newcommand{\supp}{{\rm supp}}

\begin{document}

\title[Growth Rates of Algebras]{Growth Rates of 
Algebras, I:\\
\vspace{2mm} {\normalsize\rm pointed cube terms}}

\author{Keith A. Kearnes}
\address[Keith Kearnes]{Department of Mathematics\\
University of Colorado\\
Boulder, CO 80309-0395\\
USA}
\email{Keith.Kearnes@Colorado.EDU}
\author{Emil W. Kiss}
\address[Emil W. Kiss]{
Lor\'{a}nd E{\"o}tv{\"o}s University\\
Department of Algebra and Number Theory\\
H--1117 Budapest, P\'{a}zm\'{a}ny P\'{e}ter sétány 1/c.\\
Hungary}
\email{ewkiss@cs.elte.hu}
\author{\'Agnes Szendrei}
\address[\'Agnes Szendrei]{Department of Mathematics\\
University of Colorado\\
Boulder, CO 80309-0395\\
USA}
\email{Agnes.Szendrei@Colorado.EDU}
\thanks{This material is based upon work supported by
the Hungarian National Foundation for Scientific Research (OTKA)
grant no.\ K77409, K83219, and K104251.
}
\subjclass{08A40 (08A55, 08B05)}
\keywords{Growth rate, basic identity, pointed cube term}

\begin{abstract}
We investigate the function $d_{\m a}(n)$,
which gives the size of a least size generating
set for $\m a^n$.
\end{abstract}

\maketitle

\section{Introduction}\label{intro_sec}
For a finite algebra $\m a$, write $d_{\m a}(n)=g$
if $g$ is the least size of a generating set
for $\m a^n$, and write $h_{\m a}(g)=n$ if the
largest power of $\m a$ that is $g$-generated is $\m a^n$.
The functions $d_{\m a}$ and 
$h_{\m a}$ map natural numbers
to natural numbers
and are related by
\[
d_{\m a}(n) \leq g 
\wrel{\Longleftrightarrow} 
\textrm{$\m a^n$ is $g$-generated}
\wrel{\Longleftrightarrow} 
n \leq h_{\m a}(g),
\]
which asserts that $d_{\m a}$
is the lower adjoint of $h_{\m a}$
and $h_{\m a}$
is the upper adjoint of $d_{\m a}$. 
It follows that 
$d_{\m a}, h_{\m a}\colon \omega\to\omega$ 
are increasing functions,
which are inverse bijections between their images:
\[
\im(d_{\m a}) \xrightleftharpoons[d]{h} \im(h_{\m a});
\]
and, moreover, each determines the other.
These functions make sense for partial algebras
and infinite algebras, too.

The study of the functions $d_{\m a}$ and $h_{\m a}$
has a long history, which we briefly survey.

\subsection{The $\phi$-function of a group}
In the 1936 paper \cite{hall}, Philip Hall 
generalizes the Euler $\phi$-function
from number theory by defining $\phi_k(G)$ to be the number of $k$-tuples
$\wec{t}=(t_1,\ldots,t_k)$ for which 
$\{t_1,\ldots,t_k\}$
is a generating set of the group $G$. 
The classical
Euler $\phi$-function is therefore $\phi(k)=\phi_1(\mathbb Z_k)$.
Hall calls two generating $k$-tuples $\wec{t}_1$
and $\wec{t}_2$ ``equivalent''
if there is an automorphism $\alpha$ of $G$ 
which applied coordinatewise to 
$\wec{t}_1$ yields $\wec{t}_2$. 
The automorphism group of $G$
acts freely on generating $k$-tuples, hence the number
of equivalence classes of generating $k$-tuples 
is $\phi_k(G)/|\Aut(G)|$. Hall denotes 
$\phi_k(G)/|\Aut(G)|$ by $d_k(G)$, an unfortunate conflict
with more recent notation since 
$\phi_k(G)/|\Aut(G)|$ is closer to the $h$-function 
than to the $d$-function.
Indeed, if $G$ is a finite simple nonabelian group, then
$h_G(k) = \phi_k(G)/|\Aut(G)|$. 

Hall calls the function $\phi_k(G)/|\Aut(G)|$ 
``intrinsically more interesting'' than $\phi_k(G)$, 
and derives a formula for it in the case where $G$
is a finite simple nonabelian group, namely
\begin{equation}\label{hall_formula}
h_{G}(k) = \frac{1}{|\Aut(G)|}\sum_{H\leq G} \mu(H)|H|^k
\end{equation}
where $\mu$ is the M\"obius function of the subgroup
lattice of $G$. This calculation 
is the first result of our topic.

\subsection{Non-Hopf kernels}
A group is \emph{Hopfian} if every surjective endomorphism is
an isomorphism, and non-Hopfian otherwise. A group $N$
is a \emph{non-Hopf kernel} of $G$ if it is isomorphic to
the kernel of a surjective endomorphism of $G$ that is not
an isomorphism. In the 1969 paper \cite{dey},
I.~M.~S.~Dey investigates
the problem of determining which groups are non-Hopf kernels.
Dey notes that every nontrivial group is a non-Hopf kernel,
since, for example, the kernel of the shift
\[
N^{\omega}\to N^{\omega}\colon (n_0,n_1,n_2,\ldots)\mapsto (n_1,n_2,n_3\ldots)
\]
is isomorphic to $N$.
Dey restricts attention to non-Hopf kernels of finitely
generated groups, and notes the following:
a finite complete group is not a non-Hopf kernel
of a finitely generated group. ($N$ is complete
if it is centerless and $\Aut(N)=\Inn(N)$.)
His reasoning goes like this: if $N$ is complete
and a non-Hopf kernel of $G$, then 
$C_G(N)$ is a normal complement to $N$. 
By the non-Hopf property, $C_G(N)\cong G$, so 
\[
G\cong N\times G\cong N^2\times G\cong N^3\times G\cong 
\cdots.
\]
If $G$ is finitely generated,
say by $g$ elements, then so are the quotient groups
$N^n$ for all finite $n$. But this contradicts the 
local finiteness of the variety ${\mathcal V}(N)$.
Specifically, the $g$-generated groups in this 
variety have size at most $|N|^{|N|^g}$.
Thus, Dey's paper draws attention to the (easy) fact that
if $N$ is finite, then the number of elements
required to generate $N^n$ goes to infinity
as $n$ goes to infinity. (In symbols,
$\lim_{n\to\infty}(d_{N}(n))=\infty$.)

\subsection{Growth rates of groups}
In the 1974 paper \cite{wiegold1},
James Wiegold cites Dey's work on non-Hopf
kernels as the inspiration for his investigation
into the question 
``What are the ways in which \ldots [$d_G(n)$]
\ldots can tend to infinity [when $G$ is a finite group]?''
Wiegold inverts Hall's formula (\ref{hall_formula})
to show that, for $n>0$, $d_{G}(n)$ is one of the three
natural numbers nearest
\[
\log_{|G|}(n) + \log_{|G|}(|\Aut(G)|)
\]
when $G$ is a finite simple nonabelian group,
so in this case $d_G(n)$ is asymptotically equivalent to $\log(n)$.
He shows that $d_G(n)$ has logarithmic upper and lower
bounds whenever $G$ is a finite perfect
group. ($G$ is perfect if $[G,G]=G$.)
He shows also that $d_G(n)$
agrees with a linear function for large $n$ if $G$
is a finite imperfect group.
Thus, he establishes that $d_G(n)$ tends to infinity
as a logarithmic or linear function when $G$ is a finite group.

\subsection{Growth rates of groups, semigroups and group expansions}
Wiegold's paper initiated a program of research
into growth rates of groups including, for example,
\cite{erfanian1,erfanian2,erfanian3,erfanian4,erfanian5,
erfanian_rezaei, erfanian_wiegold, 
kimmerle, lennox_wiegold, meier_wiegold, 
obraztsov, stewart_wiegold, wiegold2, wiegold3, wiegold4, 
wiegold_wilson,wise}.
The program expanded to include the 
investigation of growth rates of semigroups, in
\cite{pollak, wiegold_sem}, and later to 
include the investigation of
growth rates of more general algebraic structures,
in \cite{glass_riedel, riedel}. 
Some of the questions being investigated about growth rates of
finite algebras are related to the following theorems of Wiegold:
\begin{enumerate}
\item[(I)] 
A finite perfect group
has growth rate 
that is logarithmic ($d_{\m a}(n)\in \Theta(\log(n))$),
while a finite imperfect group has 
growth rate that is linear ($d_{\m a}(n)\in \Theta(n)$).
\item[(II)] 
A finite semigroup with identity
has growth rate that is logarithmic or linear, while a finite
semigroup without identity has
growth rate that is exponential 
($d_{\m a}(n)\in 2^{\Theta(n)}$), \cite{wiegold_sem}.
\end{enumerate}

\noindent
Herbert Riedel partially extends Item (I) to 
congruence uniform varieties in \cite{riedel}
by proving that finite algebras 
in such varieties 
that are perfect
(in the sense of modular commutator theory)
have logarithmic growth rate.
The paper \cite{quick_ruskuc} by 
Martyn Quick and Nikola Ru\v skuc
extends Item (I)
to any variety of rings, modules, $k$-algebras
or Lie algebras, but 
also falls short of extending Item (I)
to arbitrary congruence uniform varieties.

\subsection{Our work}\label{our_work}
We got interested in growth rates of finite
algebras after reading Remark 4.15 of 
\cite{quick_ruskuc}, which 
states that 
\emph{``At present no finite algebraic structure is known
for which the $d$--sequence does not have 
one of logarithmic, linear or exponential growth.''}
We found some of these missing algebras.
(Theorem~\ref{example}.)

Our interest in growth rates
was later strengthened upon learning about
paper \cite{chen}, by Hubie Chen,
which links growth rates with the constraint satisfaction
problem by giving a polynomial time reduction
from the quantified constraint satisfaction problem
to the ordinary constraint satisfaction problem
for algebras with $d_{\m a}(n)\in O(n^k)$ for some $k$.
Our new algebras are relevant to this investigation.

Our work is currently a 3-paper series, of which this is the first.

\subsubsection{
This paper}
The results from \cite{quick_ruskuc}, about growth rates
in varieties of classical algebraic structures,
can be presented in a stronger way.
Let $\Sigma$ be a set of identities.
If $\m a$ is an algebra in a language
$\mathcal K$, then say that $\m a$
\emph{realizes} $\Sigma$ 
if there is a way to interpret the function symbols 
occurring in $\Sigma$ as $\mathcal K$-terms
in such a way that each identity in $\Sigma$ holds in $\m a$.
What is really proved in \cite{quick_ruskuc}
is that if $\Sigma_{\text{Grp}}$ is the set of identities
axiomatizing the variety of groups and $\m a$ is a finite
algebra realizing $\Sigma_{\text{Grp}}$,
then $\m a$ has
a logarithmic growth rate if it is perfect
and has a linear growth rate if it is imperfect.
Although the results of \cite{quick_ruskuc} 
are stated for only a few specific varieties of group
expansions, the results hold for 
\emph{any} variety of group expansions.

The main results of this paper
are also best expressed in the terminology
of algebras realizing a set of identities.
Call a term \emph{basic} if it contains at most one 
nonnullary function symbol. 
An identity $s\approx t$ is basic if the terms
on both sides are. This paper is an investigation into
the restrictions imposed on growth rates of finite algebras 
by a set $\Sigma$ of basic identities.
A new concept that emerges from this investigation is 
the notion of a pointed cube term.
If $\Sigma$ is a set of identities in 
a language $\mathcal L$, then an
$\mathcal L$-term $F(x_1,\ldots,x_m)$ is a
\emph{$p$-pointed, $k$-cube term}
for the variety axiomatized by $\Sigma$
if there is a $k\times m$ matrix $M$
consisting of variables and $p$ distinct constant symbols,
with every column of $M$ containing a symbol different 
from $x$, such that 
\begin{equation}\label{cube}
\Sigma\models 
F(M)\approx 
\left(
\begin{matrix}
x\\
\vdots\\
x
\end{matrix}
\right).
\end{equation}
(\ref{cube}) is meant to be a compact representation
of a sequence of $k$ row identities of a special kind.
For example, 
\begin{equation}\label{m}
\Sigma\models 
{m}\left(\begin{matrix}
x&y&y\\
y&y&x
\end{matrix}
\right) \approx
\left(\begin{matrix}
x\\
x
\end{matrix}
\right),
\end{equation}
which is the assertion that 
$\Sigma\models m(x,y,y)\approx x$ and 
$\Sigma\models m(y,y,x)\approx x$,
witnesses that $m(x_1,x_2,x_3)$ is a
$3$-ary, $0$-pointed, $2$-cube term.
The basic identities (\ref{m})
define what is called a \emph{Maltsev term}.
For another example, 
\begin{equation}\label{b}
\Sigma\models 
{B}\left(\begin{matrix}
1&x\\
x&1
\end{matrix}
\right) \approx
\left(\begin{matrix}
x\\
x
\end{matrix}
\right),
\end{equation}
which is the assertion that 
$\Sigma\models B(1,x)\approx x$ and 
$\Sigma\models B(x,1)\approx x$,
witnesses that $B(x_1,x_2)$ is a
$2$-ary, $1$-pointed, $2$-cube term.
As a final example, 
\begin{equation}\label{majority}
\Sigma\models 
{M}\left(\begin{matrix}
y&x&x\\
x&y&x\\
x&x&y
\end{matrix}
\right) \approx
\left(\begin{matrix}
x\\
x\\
x
\end{matrix}
\right),
\end{equation}
which is the assertion that $M$ is a majority term
for the variety axiomatized by $\Sigma$,
witnesses that $M(x_1,x_2,x_3)$ is a
$3$-ary, $0$-pointed, $3$-cube term.

To state our main results, 
let $\Sigma$ be a set of basic identities.
We show that
\begin{enumerate}
\item[(1)]
The growth rate of any partial algebra
can be realized as the growth rate of a total algebra
(Corollary~\ref{partial_cor}).
If the partial algebra is finite, then the total algebra
can be taken to be finite.
\item[(2)]
A function $D\colon \omega\to\omega^+$
arises as the $d$-function
of a countably infinite algebra if and only if
(i)~$D$ is increasing and satisfies
(ii)~$D(0)=0$ or $1$, and (iii)~\mbox{$D(2)>0$}
(Theorem~\ref{big}). 
\item[(3)]
If $\Sigma$ does not entail the existence of a pointed cube
term, then $\Sigma$ imposes no restriction on growth
rates of algebras (Theorem~\ref{nonrestrictive_thm}).
That is, for every algebra $\m a$ there is
an algebra $\m b$ realizing
$\Sigma$ such that $d_{\m b}=d_{\m a}$.
The algebra $\m b$ can be taken to be finite if 
$\m a$ is finite and the set $\Sigma$ involves
only finitely many distinct constants.
\item[(4)]
If $\Sigma$ entails the existence of a $p$-pointed
cube term, $p\geq 1$, then any algebra $\m a$
realizing $\Sigma$ such that $\m a^{p+k-1}$
is finitely generated 
has growth rate that is bounded above
by a polynomial (Theorem~\ref{pointed_polynomial}).
This is a nontrivial restriction.
\item[(5)]
There exist finite algebras
with pointed cube terms whose growth rate is 
asymptotically equivalent to a polynomial 
of any prescribed degree (Theorem~\ref{example}).
\item[(6)]
Any function that arises as the growth rate
of an algebra with a pointed cube term also arises
as the growth rate of an algebra without a pointed cube term
(Theorem~\ref{avoid_thm}).
\end{enumerate}

In addition to these items we give a new proof of 
Kelly's Completeness Theorem for basic identities
(Theorem~\ref{kelly}). 
We give a procedure, based on this theorem, 
for deciding if a finite set of basic
identities implies the existence of a pointed cube term
(Corollary~\ref{decide_cor}). 

\subsubsection{Our second paper, \cite{paper2}}
We investigate growth rates of algebras
with a $0$-pointed $k$-cube term, which we shall just call
a ``$k$-cube term''.
Such terms were first identified in
\cite{bimmvw} in connection with investigations
into constraint satisfaction problems, while an equivalent
type of term was identified independently in
\cite{parallelogram} in connection with 
investigations into compatible relations of algebras.

We show in \cite{paper2}
that if $\m a$ has a $k$-cube term and
$\m a^{k}$ is finitely generated,
then $d_{\m a}(n)\in O(\log(n))$ if $\m a$ is perfect,
while $d_{\m a}(n)\in O(n)$ if $\m a$ is imperfect.
One can strengthen `Big Oh' to `Big Theta'
if $\m a$ is finite.
This extends Wiegold's result (I) for groups 
to a setting that includes, as special cases, any 
finite algebra with a Maltsev term 
(in particular, any finite algebra in a congruence uniform variety)
or any finite algebra with a majority term. 

\subsubsection{Our third paper, \cite{paper3}}
We investigate growth rates of finite solvable algebras.
Our original aim was to show that the only growth rates
exhibited by such algebras are linear or exponential functions.
We do prove this for finite nilpotent algebras 
and we prove it for finite solvable algebras
with a pointed cube term, but the general case 
of a finite solvable algebra without a pointed cube term
remains open.

\section{Preliminaries}\label{prelim_sec}

\subsection{Notation}\label{notation}

$[n]$ denotes the set $\{1,\ldots,n\}$.
A tuple in $A^n$ may be denoted 
$(a_1,\ldots,a_n)$ or $\wec{a}$, and
may be viewed as a function
$\wec{a}\colon [n]\to A$.
A tuple $(a,a,\ldots,a)\in A^n$ with all coordinates equal to $a$
may be denoted $\hat{a}$.
The size of a set $A$, the 
length of a tuple $\wec{a}$, and the length of a string $\sigma$
are denoted $|A|$, $|\wec{a}|$ and $|\sigma|$.
Structures are denoted in bold face font,
e.g. $\m a$, while the universe of a structure
is denoted by the same character 
in italic font, e.g., $A$. 
The subuniverse of $\m a$ generated by a subset $G\subseteq A$
is denoted $\lb G\rb$.

We will use Big Oh notation.
If $f$ and $g$ are real-valued functions defined
on some subset of the real numbers, then 
$f\in O(g)$ and $f=O(g)$ both mean that there are 
positive constants
$M$ and $N$ such that $|f(x)|\leq M|g(x)|$ for all $x>N$.
We write $f\in \Omega(g)$ and $f=\Omega(g)$ to mean that there are 
positive constants
$M$ and $N$ such that $|f(x)|\geq M|g(x)|$ for all $x>N$.
Finally, $f\in \Theta(g)$ and $f=\Theta(g)$ mean that both
$f\in O(g)$ and $f\in \Omega(g)$ hold.

\subsection{Easy estimates}\label{estimates}

\begin{thm}\label{basic_estimates}
Let $\m a$ be an algebra.
\begin{enumerate}
\item[(1)] $d_{\m a^k}(n) = d_{\m a}(kn)$.
\item[(2)] If $\m b$ is a homomorphic image of 
$\m a$, then $d_{\m b}(n) \leq d_{\m a}(n)$.
\item[(3)] If $\m b$ is an expansion of 
$\m a$ (equivalently, if $\m a$ is a reduct of $\m b$), 
then $d_{\m b}(n) \leq d_{\m a}(n)$.
\item[(4)] {\rm (From \cite{quick_ruskuc})}
If $\m b$ is 
the expansion of $\m a$ obtained by adjoining all constants, then
\[
d_{\m a}(n) - d_{\m a}(1)\leq d_{\m b}(n) \leq d_{\m a}(n).
\]
\end{enumerate}
\end{thm}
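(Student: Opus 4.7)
Parts (1)--(3) are routine definition-chasing. For (1), the canonical bijection $(A^k)^n \to A^{kn}$ is an isomorphism of algebras, so the minimum size of a generating set is the same on both sides, giving $d_{\m a^k}(n)=d_{\m a}(kn)$. For (2), a surjective homomorphism $\pi\colon \m a\to\m b$ induces a coordinatewise surjection $\pi^n\colon \m a^n\to \m b^n$, and $\pi^n(G)$ is a generating set for $\m b^n$ of size at most $|G|$ whenever $G$ generates $\m a^n$. For (3), the basic operations of $\m a^n$ are among the basic operations of $\m b^n$, so any subset that generates $\m a^n$ also generates $\m b^n$; this observation also supplies the upper bound in (4).

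The only item with real content is the lower bound in (4). The main idea is that, when $\m b$ is obtained from $\m a$ by naming a constant $c_a$ for each $a\in A$, the extra expressive power of $\m b^n$ over $\m a^n$ amounts to having every diagonal tuple $\hat a$ available as a nullary term. But the diagonal $\Delta=\{\hat a : a\in A\}$ is itself a subuniverse of $\m a^n$, isomorphic to $\m a$ via $a\mapsto\hat a$, so $\Delta$ can be generated inside $\m a^n$ by $d_{\m a}(1)$ elements. I would pick a minimum generating set $G$ of $\m b^n$ and a generating set $D\subseteq A$ of $\m a$ of size $d_{\m a}(1)$, and then argue that $G\cup\{\hat d : d\in D\}$ generates $\m a^n$: any $\m b^n$-term built on elements of $G$ can be rewritten as an $\m a^n$-term on this expanded set by replacing each invocation of a nullary symbol $c_a$ with the tuple $\hat a$, which lies in $\langle\{\hat d : d\in D\}\rangle_{\m a^n}\supseteq\Delta$. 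This yields $d_{\m a}(n)\le d_{\m b}(n)+d_{\m a}(1)$.

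There is no serious obstacle. The only conceptual point worth flagging is the use of the diagonal embedding: without it one would pay an additive error of $|A|$ (one constant per element), while the diagonal argument collapses this cost to $d_{\m a}(1)$, which is precisely what the statement of (4) demands.
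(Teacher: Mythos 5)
Your proposal is correct and follows essentially the same route as the paper: parts (1)--(3) by the same direct observations, and for the lower bound in (4) the paper likewise adjoins the diagonal tuples $\hat a$ for $a$ in a least-size generating set of $\m a$ and rewrites $\m b$-terms (equivalently, polynomial operations of $\m a$ acting coordinatewise) as $\m a$-terms over the enlarged set. No gaps.
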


\begin{proof}
For $(1)$, both 
$d_{\m a^k}(n)$ and $d_{\m a}(kn)$
represent
the number of elements in a smallest size
generating set for $(\m a^k)^n\cong \m a^{kn}$.

For $(2)$, if $\varphi\colon \m a\to \m b$ is surjective and
$G\subseteq A^n$ is a smallest size generating set for $\m a^n$,
then $\varphi(G)$ is a generating set for $\m b^n$.
Hence $d_{\m b}(n)\leq |\varphi(G)|\leq |G| = d_{\m a}(n)$.

For $(3)$, if 
$G\subseteq A^n$ is a smallest size generating set for $\m a^n$,
then $G$ is also a generating set for $\m b^n$.
Hence $d_{\m b}(n)\leq |G| = d_{\m a}(n)$.

For $(4)$, the right-hand
inequality $d_{\m b}(n) \leq d_{\m a}(n)$ follows from $(3)$.
Now let $G\subseteq A^n$ be a smallest size generating set
for $\m b^n$ and let $H\subseteq A$ be a smallest 
size generating set for $\m a$.
For each $a\in H$ let $\hat{a} = (a,a,\ldots,a)\in A^n$ be the associated
constant tuple, and let $\widehat{H}$ be the set of these.
Every tuple of $A^n$ is generated from $G$ by polynomial operations
of $\m a$ acting coordinatewise, hence is generated from 
$G\cup \widehat{H}$ by term operations of $\m a$ acting
coordinatewise. This proves
$
d_{\m a}(n) \leq |G| + |H| = d_{\m b}(n) + d_{\m a}(1),
$ 
from which the left-hand inequality follows.
\end{proof}

The next theorem will not be used later in the paper,
except that in Section~\ref{problems} one should
know that the $d$-function of a finite
algebra is bounded below by a logarithmic function
and above by an exponential function.

\begin{thm}\label{first_bounds}
If $\m a$ is a finite algebra of
more than one element and $n>0$, then 
\[
\lceil \log_{|A|}(n) \rceil\leq d_{\m a}(n) \leq |A|^n
\]
and
\[
\lfloor \log_{|A|}(n) \rfloor\leq h_{\m a}(n) \leq |A|^n.
\]
Hence $d_{\m a}(n), h_{\m a}(n)\in \Omega(\log(n))\cap 2^{O(n)}$.
Moreover,
\begin{enumerate}
\item[(1)] $d_{\m a}(n) \in O(\log(n))$ iff 
$h_{\m a}(n) \in 2^{\Omega(n)}$.
\item[(2)] $d_{\m a}(n) \in O(n)$ iff 
$h_{\m a}(n) \in \Omega(n)$, and 
$d_{\m a}(n) \in \Omega(n)$ iff 
$h_{\m a}(n) \in O(n)$.
\item[(3)] $d_{\m a}(n) \in 2^{\Omega(n)}$ iff
$h_{\m a}(n) \in O(\log(n))$.
\end{enumerate}
\end{thm}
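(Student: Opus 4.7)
The plan is to establish the four explicit numerical bounds first and then derive the asymptotic equivalences (1)--(3) from the Galois adjunction $d_{\m a}(n)\le g \iff n\le h_{\m a}(g)$. The only step requiring real content is the lower bound $d_{\m a}(n)\ge\lceil\log_{|A|}(n)\rceil$; everything else is routine consequence.

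For that lower bound I will use a column-distinctness argument. Let $G=\{g_1,\dots,g_m\}\subseteq A^n$ be any generating set for $\m a^n$, and consider the $n$ column functions $c_i\colon G\to A$, $g_j\mapsto (g_j)_i$. Each $c_i$ lies in $A^G$, so there are only $|A|^m$ possibilities, and it suffices to show all $n$ of them are pairwise distinct. If $c_i=c_j$ for some $i\ne j$, then $g_i=g_j$ for every $g\in G$, so $G$ is contained in the subuniverse $\{t\in A^n : t_i=t_j\}$ of $\m a^n$; but then this subuniverse equals $A^n$, contradicting $|A|>1$. Hence $n\le|A|^m$, so $m\ge\log_{|A|}(n)$. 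The other three numerical bounds now follow: $d_{\m a}(n)\le|A|^n$ because $A^n$ generates $\m a^n$; $h_{\m a}(n)\le|A|^n$ because $h_{\m a}(n)=k$ means $d_{\m a}(k)\le n$, so $\log_{|A|}(k)\le n$ by what we just proved; and $h_{\m a}(n)\ge\lfloor\log_{|A|}(n)\rfloor$ because, setting $k=\lfloor\log_{|A|}(n)\rfloor$, we have $|A^k|\le n$, so $A^k$ is itself a generating set of size at most $n$ for $\m a^k$.

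For (1)--(3) the strategy is to convert asymptotic bounds on one function into bounds on the other by ``functional inversion'' through the adjunction. To illustrate (1): if $d_{\m a}(n)\le C\log n$ for $n>N$, then for large $g$ set $n=\lfloor 2^{g/C}\rfloor$ to obtain $d_{\m a}(n)\le g$, whence $h_{\m a}(g)\ge n\in 2^{\Omega(g)}$; conversely, plugging $g=\lceil(\log n)/C\rceil$ into a hypothesis $h_{\m a}(g)\ge 2^{Cg}$ gives $d_{\m a}(n)\le g\in O(\log n)$. Part (3) is the same argument with ``logarithmic'' and ``exponential'' swapped. For (2), since a linear function is asymptotically its own functional inverse up to constants, a bound $d_{\m a}(n)\le Cn$ yields $h_{\m a}(g)\ge\lfloor g/C\rfloor$ by plugging $n=\lfloor g/C\rfloor$, and the other three implications of (2) are symmetric. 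No step here presents any real obstacle; the single creative ingredient is the column-distinctness observation, and the remainder amounts to careful but unsurprising bookkeeping between the two adjoint functions.
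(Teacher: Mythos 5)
Your proof is correct, but it reaches the four numerical bounds by a genuinely different route than the paper. The paper first invokes monotonicity of $d$ under expansion (its Theorem on basic estimates, item (3)) to reduce to the two extremal clones on $A$: for the algebra with only projections it computes $d(n)=|A|^n$ exactly, and for the algebra with \emph{all} finitary operations it computes $h(n)=|A|^n$ exactly via Foster's theorem that the $n$-generated free algebra of a primal algebra is $\m a^{|A|^n}$; the remaining two bounds then fall out by adjointness. You instead argue directly for an arbitrary $\m a$: the lower bound $d_{\m a}(n)\geq\lceil\log_{|A|}(n)\rceil$ comes from your column-distinctness observation (if two coordinates of every generator agree, the generated subuniverse lies inside the proper subuniverse $\{t: t_i=t_j\}$, so a size-$m$ generating set forces $n\leq |A|^m$), and the lower bound on $h$ comes from exhibiting $A^k$ itself as a generating set of $\m a^k$. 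Your version is more elementary and self-contained (no appeal to Foster's theorem or to the comparison with extremal clones), and your column argument is essentially the same counting principle underlying Dey's bound $|N|^{|N|^g}$ quoted in the introduction; what the paper's version buys is the observation that both extremes are actually attained, by the trivial clone and the full clone respectively. For items (1)--(3) the two arguments coincide in substance: both pass bounds through the adjunction, the paper via a general claim about floors and ceilings of inverse functions, you via explicit substitutions; your remark on (3) implicitly uses the reversed form $h_{\m a}(k)<n\Leftrightarrow k<d_{\m a}(n)$ of the adjunction, which the paper spells out and which is immediate for maps between total orders, so there is no gap there.
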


\begin{proof}
It follows from Theorem~\ref{basic_estimates}~(3) that,
among all algebras with universe $A$, the algebra 
with only projection operations for its term operations
has the smallest $d$-function and the
algebra 
with all finitary operations as term operations has
the largest $d$-function. These two algebras are
also extremes for the $h$-function.

If $\m a$ has no nontrivial term operations, then every element of $A^n$
is a required generator, so $d_{\m a}(n) = |A|^n$.
In this case, $h_{\m a}(n)= \lfloor \log_{|A|}(n)\rfloor$ for $n>0$,
since $h$ is the upper adjoint of $d$.

Now assume that $\m a$ has 
all finitary operations as term operations.
The $n$-generated free algebra in the variety generated by $\m a$
is isomorphic to $\m a^{|A|^n}$ (Theorem 3 of \cite{foster}).
Since the largest $n$-generated algebra in this variety
is a power of $\m a$, it is also the largest $n$-generated 
power of $\m a$ in the variety; we obtain that
$h_{\m a}(n) = |A|^n$. 
In this case, $d_{\m a}(n)= \lceil \log_{|A|}(n)\rceil$ for $n>0$,
since $d$ is the lower adjoint of $h$.

The fact that $d_{\m a}$ is the lower adjoint 
of $h_{\m a}$ suggests an asymmetry, in that
\begin{equation}\label{adjoint1}
d_{\m a}(n)\leq k\Longleftrightarrow n\leq h_{\m a}(k),
\end{equation}
relates an upper bound of $d_{\m a}$ to a lower bound
of $h_{\m a}$. But the fact that these functions
are defined between totally ordered sets allows us to
rewrite (\ref{adjoint1}) as
\begin{equation}\label{adjoint2}
h_{\m a}(k) < n\Longleftrightarrow k < d_{\m a}(n),
\end{equation}
which almost exactly reverses condition (\ref{adjoint1}) on 
$d_{\m a}$ and $h_{\m a}$.
Using this fact and the following claim, 
one easily verifies Items (1)--(3).

\begin{clm}
If $f, g\colon [a,\infty)\to \mathbb R$ are increasing
functions that tend to infinity as $x$ tends to infinity,
then 
$\lfloor f(n)\rfloor < d_{\m a}(n)\leq \lceil g(n)\rceil$ 
holds for all large $n$
iff $\lfloor g^{-1}(n)\rfloor \leq h_{\m a}(n) < \lceil f^{-1}(n)\rceil$ 
holds for all large $n$.
\end{clm}

\cproof
Allow ``$\forall^{\infty} N$'' to stand for ``for all large $n$'',
i.e., for ``$(\exists N)(\forall n>N)$''.
We have
\[
\begin{array}{rl}
\forall^{\infty} N(d_{\m a}(n)\leq \lceil g(n)\rceil) 
&\Longrightarrow\;\;
\forall^{\infty} N(n\leq h_{\m a}(\lceil g(n)\rceil))\\
&\Longrightarrow\;\;
\forall^{\infty} N(\lfloor g^{-1}(n)\rfloor \leq 
h_{\m a}(\lceil g(\lfloor g^{-1}(n)\rfloor)\rceil))\\
&\Longrightarrow\;\;
\forall^{\infty} N(\lfloor g^{-1}(n)\rfloor \leq h_{\m a}(n)),
\end{array}
\]
because the monotonicity of $g$ guarantees that 
$\lceil g(\lfloor g^{-1}(n)\rfloor)\rceil\leq n$.
The reverse implication is proved the same way, as are 
both implications in 
$\lfloor f\rfloor < d \Leftrightarrow h < \lceil f^{-1}\rceil$.~\cqed
\end{proof}

Recall that the \emph{free spectrum} of a variety
$\mathcal V$ is the function $f_{\mathcal V}(n):=|F_{\mathcal V}(n)|$
whose value at $n$ is the cardinality of the
$n$-generated free algebra in $\mathcal V$.

\begin{thm}\label{spec1}
If $\m a$ is a nontrivial finite algebra and $f_{\mathcal V}$
is the free spectrum of the variety $\mathcal V = {\mathcal V}(\m a)$,
then 
$
h_{\m a}(n)\leq \log_{|A|}(f_{\mathcal V}(n))
$
for $n>0$.
In particular,
\begin{enumerate}
\item[(1)] 
if $f_{\mathcal V}(n)\in O(n^k)$ for some fixed $k\in\mathbb Z^+$, 
then $d_{\m a}(n)\in 2^{\Theta(n)}$;
\item[(2)] 
if $f_{\mathcal V}(n)\in 2^{O(n)}$,
then $d_{\m a}(n)\in \Omega(n)$. 
\end{enumerate}
\end{thm}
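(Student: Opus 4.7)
The plan is to first establish the main inequality by a direct counting argument, and then read off the two corollaries by combining it with the bounds and adjunction equivalences from Theorem~\ref{first_bounds}.

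For the main inequality, I would argue as follows. Set $m = h_{\m a}(n)$, so that $\m a^m$ is, by definition, $n$-generated. Since $\m a^m \in \mathcal V = \mathcal V(\m a)$ and is generated by $n$ elements, it is a homomorphic image of the free algebra $F_{\mathcal V}(n)$. Consequently
\[
|A|^m = |\m a^m| \leq |F_{\mathcal V}(n)| = f_{\mathcal V}(n),
\]
and taking logarithms base $|A|$ (which is permissible since $\m a$ is nontrivial, so $|A|\geq 2$) yields $h_{\m a}(n) = m \leq \log_{|A|}(f_{\mathcal V}(n))$, as claimed.

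For part (1), suppose $f_{\mathcal V}(n) \in O(n^k)$. Then $\log_{|A|}(f_{\mathcal V}(n)) \in O(\log(n))$, so by the main inequality $h_{\m a}(n) \in O(\log(n))$. By Theorem~\ref{first_bounds}(3), this is equivalent to $d_{\m a}(n) \in 2^{\Omega(n)}$. But Theorem~\ref{first_bounds} also gives the universal upper bound $d_{\m a}(n) \in 2^{O(n)}$ for any nontrivial finite $\m a$, so in fact $d_{\m a}(n) \in 2^{\Theta(n)}$. For part (2), suppose $f_{\mathcal V}(n) \in 2^{O(n)}$. Then $\log_{|A|}(f_{\mathcal V}(n)) \in O(n)$, so $h_{\m a}(n) \in O(n)$. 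By Theorem~\ref{first_bounds}(2) this is equivalent to $d_{\m a}(n) \in \Omega(n)$.

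There is no real obstacle here; the content of the theorem is the observation that an $n$-generated power of $\m a$ must embed into $F_{\mathcal V}(n)$ as a quotient target, and the rest is bookkeeping with the adjunction between $d$ and $h$. The only subtlety worth noting is that the inequality $h_{\m a}(n) \leq \log_{|A|}(f_{\mathcal V}(n))$ requires $n > 0$ so that $F_{\mathcal V}(n)$ is actually nontrivial (for $n = 0$ the free algebra could be empty or a single element, breaking the log estimate), which is why the hypothesis $n > 0$ appears in the statement.
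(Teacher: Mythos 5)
Your proof is correct and follows essentially the same route as the paper: the $n$-generated algebra $\m a^{h_{\m a}(n)}$ is a quotient of $\m f_{\mathcal V}(n)$, giving $|A|^{h_{\m a}(n)}\leq f_{\mathcal V}(n)$, and the two corollaries then follow from the bounds and equivalences in Theorem~\ref{first_bounds} exactly as you describe. No issues.
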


\begin{proof}
Assume that $n>0$.

The algebra $\m a^{h_{\m a}(n)}$ is $n$-generated, hence
a quotient of the $n$-generated free algebra
$\m f_{\mathcal V}(n)$. This proves that 
$|A|^{h_{\m a}(n)}\leq f_{\mathcal V}(n)$, or 
$h_{\m a}(n)\leq \log_{|A|}(f_{\mathcal V}(n))$.

If $f_{\mathcal V}(n)\in O(n^k)$ for some fixed $k\in\mathbb Z^+$, 
then $\log(f_{\mathcal V}(n))\in O(\log(n))$,
hence $h_{\m a}(n)\in O(\log(n))$. 
Theorem~\ref{first_bounds} proves that 
$d_{\m a}(n)\in 2^{\Omega(n)}$ holds when $h_{\m a}(n)$
is bounded like this and that
$d_{\m a}(n)\in 2^{O(n)}$ holds just because $\m a$ is finite,
so $d_{\m a}(n)\in 2^{\Theta(n)}$. 

If $f_{\mathcal V}(n)\in 2^{O(n)}$, then
$\log(f_{\mathcal V}(n))\in O(n)$,
hence $h_{\m a}(n)\in O(n)$. It follows from 
Theorem~\ref{first_bounds}~(2) that 
$d_{\m a}(n)\in \Omega(n)$.
\end{proof}

\begin{cor}\label{abelian_cor}
Let $\m a$ be a nontrivial finite algebra and let $\m b$
be a nontrivial homomorphic image of $\m a^k$ for some $k$.
\begin{enumerate}
\item[(1)] 
If $\m b$ is strongly abelian (or even just strongly rectangular), 
then $d_{\m a}(n)\in 2^{\Theta(n)}$.
\item[(2)] 
If $\m b$ is abelian, then $d_{\m a}(n)\in \Omega(n)$.
\end{enumerate}
\end{cor}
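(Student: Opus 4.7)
The plan is to reduce both parts to Theorem~\ref{spec1} applied to $\m b$ itself. Since $\m b$ is a homomorphic image of $\m a^k$, parts~(1) and~(2) of Theorem~\ref{basic_estimates} yield
\[
d_{\m b}(n)\;\leq\; d_{\m a^k}(n)\;=\;d_{\m a}(kn),
\]
so any asymptotic lower bound on $d_{\m b}$ transfers to a lower bound on $d_{\m a}$ of the same order (the constant factor $1/k$ is absorbed by the $\Omega$). In particular, $d_{\m b}(n)\in \Omega(n)$ gives $d_{\m a}(n)\in \Omega(n)$, and $d_{\m b}(n)\in 2^{\Omega(n)}$ gives $d_{\m a}(n)\in 2^{\Omega(n)}$; combined with the $2^{O(n)}$ upper bound from Theorem~\ref{first_bounds} (valid since $\m a$ is finite), this upgrades to $d_{\m a}(n)\in 2^{\Theta(n)}$. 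The task therefore reduces to establishing the corresponding lower bounds for $d_{\m b}$.

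For $\m b$ itself, Theorem~\ref{spec1} converts upper bounds on the free spectrum $f_{\mathcal V(\m b)}$ into the required lower bounds on $d_{\m b}$. To handle~(1), I would invoke the classical fact that a finite strongly rectangular algebra (and so, \emph{a fortiori}, a finite strongly abelian one) generates a variety whose free spectrum is polynomially bounded; Theorem~\ref{spec1}(1) then gives $d_{\m b}(n)\in 2^{\Theta(n)}$. To handle~(2), I would invoke the bound $f_{\mathcal V(\m b)}(n)\in 2^{O(n)}$ for a finite abelian algebra (the class-one case of the classical exponential free-spectrum bound for finite nilpotent algebras); Theorem~\ref{spec1}(2) then gives $d_{\m b}(n)\in \Omega(n)$.

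The main obstacle is not the asymptotic bookkeeping but locating the right free-spectrum estimates, since these are the nontrivial classical inputs. The polynomial bound in the strongly rectangular case reflects that any essentially $n$-ary term operation depends essentially on a uniformly bounded number of variables, leaving only polynomially many term operations modulo fictitious variables. The exponential bound in the abelian case reflects that term operations of a finite abelian algebra are controlled by an underlying affine/module-like structure on the algebra. Once these classical estimates are cited, the two reductions above complete the proofs of~(1) and~(2).
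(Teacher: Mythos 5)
Your proposal is correct and follows essentially the same route as the paper: apply Theorem~\ref{spec1} to $\m b$ using the classical free-spectrum bounds (polynomial for strongly rectangular, $2^{O(n)}$ for abelian), then transfer the lower bounds to $\m a$ via $d_{\m b}(n)\leq d_{\m a^k}(n)=d_{\m a}(kn)$ and close the exponential case with the $2^{O(n)}$ upper bound for finite algebras. You actually spell out the transfer step more explicitly than the paper does.
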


\begin{proof}
For (1), Theorem~5.3
of \cite{strnil} proves that a finite strongly rectangular
algebra generates a variety with free spectrum bounded
above by a polynomial.
By Theorem~\ref{spec1}, $d_{\m a}(n)\in 2^{\Theta(n)}$
in this case. The strong abelian property 
is more restrictive than the strong rectangular
property by Lemma~2.2~(11) of \cite{strnil}.

For (2), any finite abelian algebra generates a variety
$\mathcal V$ 
whose free spectrum satisfies 
$f_{\mathcal V}(n)\in 2^{O(n)}$, according to \cite{berman_mck},
so Theorem~\ref{spec1}~(2) completes the argument.
\end{proof}

Recall that an algebra 
is \emph{affine} if it is polynomially
equivalent to a module. It is known
that $\m a$ is affine iff $\m a$ is 
abelian and has a Maltsev term iff 
$\m a$ is abelian and has a Maltsev polynomial.

\begin{thm}\label{affine_prep}
If $\m a^2$ is a finitely generated affine algebra, 
then $d_{\m a}(n)\in O(n)$. If, moreover, $\m a$ is finite
and has more than one element, then 
$d_{\m a}(n)\in \Theta(n)$.
\end{thm}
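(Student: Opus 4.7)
The plan is to handle upper and lower bounds separately. By Theorem~\ref{basic_estimates}(1) and the monotonicity of $d_{\m a}$, the upper bound reduces to proving $d_{\m b}(n) \in O(n)$ for $\m b := \m a^2$, since $d_{\m a}(n) \leq d_{\m a}(2\lceil n/2\rceil) = d_{\m b}(\lceil n/2\rceil)$.

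For the upper bound, I would leverage the polynomial equivalence of $\m b$ with a module. Fix any $0 \in B$ and let $\m m = (B, +, -, 0, \{r\cdot\}_{r\in R})$ be a module polynomially equivalent to $\m b$, as guaranteed by affineness. First I would establish that $\m m$ is finitely generated as an $R$-module: each basic operation of $\m b$ takes the polynomial form $\sum r_i x_i + c$ over $\m m$, so the finite set $C$ of constants $c$ appearing in these representations, together with any algebra-generating set $G_0$ for $\m b$, satisfies $\langle G_0 \cup C\rangle_{\m m} \supseteq \lb G_0\rb_{\m b} = B$, forcing $\langle G_0 \cup C\rangle_{\m m} = B$. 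Next, $\m m^n$ is module-generated by the $O(n)$ elements $h e_j$, where $h$ ranges over the module generators of $\m m$, $1 \leq j \leq n$, and $h e_j$ has $h$ in coordinate $j$ and $0$ elsewhere. Adjoining a finite collection of diagonal tuples $\hat c = (c, \ldots, c)$ (the constants used to realize the module operations of $\m m$ as polynomials of $\m b$) yields an algebra-generating set for $\m b^n$: every element of $B^n$ is an $R$-linear combination of the $h e_j$'s, hence a polynomial of $\m m^n$, and can therefore be written as a $\m b^n$-term whose constants can be chosen from the adjoined diagonal tuples. The crucial observation is that the basic constants of $\m b^n$ are precisely diagonals over the basic constants of $\m b$, so a finite (independent of $n$) set of diagonal tuples suffices. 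This gives $d_{\m b}(n) \in O(n)$.

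For the lower bound under the hypothesis that $\m a$ is finite with $|A|>1$, note that $\m a$ embeds into $\m a^2$ as the diagonal subalgebra, and since $\m a^2$ is affine and therefore abelian, $\m a$ inherits abelianness. Corollary~\ref{abelian_cor}(2) then yields $d_{\m a}(n) \in \Omega(n)$, and combining with the upper bound gives $d_{\m a}(n) \in \Theta(n)$.

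The main obstacle is the module-theoretic bookkeeping in the upper bound: carefully tracking how constants and scalar multiplications propagate through compositions of basic operations of $\m b$, and managing the interaction between the ring $R$ (potentially infinite) and the finite data provided by a finite signature and finite generating set. Once this bookkeeping is clean, the generating-set construction for $\m b^n$ is elementary and yields a bound of the form $|G_0 \cup C|\cdot n + \mathrm{const}$, which is $O(n)$.
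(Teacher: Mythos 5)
Your overall strategy coincides with the paper's: reduce to the module case via polynomial equivalence, generate $\m m^n$ as a module by the $O(n)$ tuples with a single nonzero entry taken from a module generating set, pass back to algebra generation by adjoining finitely many diagonal tuples (this conversion is exactly the content of Theorem~\ref{basic_estimates}(4), which you could cite instead of redoing the bookkeeping), and obtain the lower bound from Corollary~\ref{abelian_cor}(2). The one step where you diverge is the crucial one, and it has a gap: your argument that the module $\m m$ polynomially equivalent to $\m b=\m a^2$ is finitely generated. You take $C$ to be ``the finite set of constants $c$ appearing in'' the representations $F=\sum r_ix_i+c_F$ of the basic operations, but nothing in the hypotheses makes $C$ finite: the signature may be infinite, and then there may be infinitely many constants $c_F$. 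Your closing paragraph silently assumes a finite signature, which the theorem does not grant. This is not a removable technicality. Your argument nowhere uses that $\m a^2$, rather than $\m a$, is finitely generated, so if it were correct it would prove that every finitely generated affine algebra is polynomially equivalent to a finitely generated module --- precisely the statement the paper's proof flags as false. For example, $\langle \mathbb{Q};\, x-y+z,\ \{x+q\}_{q\in\mathbb{Q}}\rangle$ is affine and generated by $\{0\}$, yet it is polynomially equivalent only to the $\mathbb{Z}$-module $\mathbb{Q}$, which is not finitely generated; there your set $C$ is all of $\mathbb{Q}$. (Consistently with the theorem, the square of this algebra is not finitely generated.)

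The correct way to extract finite generation of the module from finite generation of $\m a^2$ is to make the constants cancel rather than to collect them: if $\{(a_i,b_i)\}$ is a finite generating set for $\m a^2$ and $t=\sum r_ix_i+c_t$ is any term, then $t(\wec{a})-t(\wec{b})=\sum r_i(a_i-b_i)$, so every difference of elements of $A$ lies in the submodule generated by the finitely many differences $a_i-b_i$, and the module is generated by $\{a_1\}\cup\{a_i-b_i\}$. This is exactly what the paper's appeal to the linearization $\m a^2/\Delta$ of \cite{freese-mckenzie} accomplishes. With that step repaired (or with an explicit added hypothesis of finite signature, under which your finite-generation argument is sound), the remainder of your proof goes through.
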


\begin{proof}
The theorem is true under the weaker assumption that $\m a$ 
(rather than $\m a^2$) is finitely generated, provided
$\m a$ is a module rather than an arbitrary affine algebra.
To see this,
suppose that $\m m$ is a module generated by a finite subset $G$.
The set of tuples in $\m m^n$
with exactly one nonzero entry, which is taken from $G$, 
is a generating set for $\m m^n$ of size $\leq |G|\cdot n$.
Hence $d_{\m m}(n)\in O(n)$. If, moreover, $\m m$ is finite and has
more than one element, then
Corollary~\ref{abelian_cor}~(2) proves that 
$d_{\m m}(n)\in \Omega(n)$, so $d_{\m m}(n)\in \Theta(n)$.

It now follows from Theorem~\ref{basic_estimates}~(4) 
that if $\m a$ is an algebra that is polynomially
equivalent to a finitely generated module,
then $d_{\m a}(n)\in O(n)$, and $d_{\m a}(n)\in\Theta(n)$
if $\m a$ is finite and nontrivial. Unfortunately, not
every finitely generated affine algebra is polynomially
equivalent to a finitely generated module. 
But if $\m a$
is affine and $\m a^2$ is finitely generated, then
the linearization
$\m a^2/\Delta$ (see \cite{freese-mckenzie} pp.~114)
is also finitely generated and term equivalent to 
a reduct of the underlying module of $\m a$. Hence
when $\m a^2$ is finitely generated, then
$\m a$ is polynomially equivalent to a finitely generated module,
and the conclusions of the theorem hold.
\end{proof}

\section{General growth rates}

\subsection{Growth rates of partial algebras}\label{partial_subsection}

A partial algebra is a set
equipped with a set of partial operations.
A total algebra is considered to be a partial algebra,
but, of course, some partial algebras are not total.

The definitions of functions 
$d_{\m a}$ and $h_{\m a}$ make sense when $\m a$
is a partial algebra,
as does the problem of determining growth rates 
of partial algebras. 
Theorem~\ref{basic_estimates}~(3), which relates the 
growth rate of an algebra to that of a reduct,
holds in exactly the same form if a ``reduct of $\m b$''
is interpreted to mean an algebra $\m a$
with the same universe as $\m b$ whose basic partial operations
are obtained from \emph{some} of the term partial operations of $\m b$
by possibly restricting their domains.

We will learn in this subsection 
that a function arises as the growth rate
of a partial algebra if and only if it arises 
as the growth rate of a total algebra.

\begin{df}\label{one-point}
Let $\m a = \lb A; P\rb$ be a partial algebra
with universe $A$ and a set $P$ of partial operations on $A$.
The \emph{one-point completion} of 
$\m a$ is the total algebra whose
universe is $A_{0}:=A\cup \{0\}$, where
$0$ is some element not in $A$, and whose operations 
$P_0 = \{p_0\;|\;p\in P\}\cup \{\wedge\}$ are defined
as follows.

\begin{enumerate}
\item[(1)] If $p\in P$ is a partial 
$m$-ary operation on $A$ with domain $D\subseteq A^m$,
then the total operation $p_{0}\colon (A_{0})^m\to A_{0}$
is defined by 
\[
p_{0}(\wec{a}) = 
\begin{cases}
p(\wec{a}) & \textrm{if $\wec{a}\in D$;}\\
0 & \textrm{otherwise.}
\end{cases}
\]
\item[(2)]
A meet operation $\wedge$ on $A_{0}$ is defined by
\[
a\wedge b = 
\begin{cases}
a & \textrm{if $a=b$;}\\
0 & \textrm{otherwise.}
\end{cases}
\]
\end{enumerate}
\end{df}

\begin{thm}\label{partial}
Let $\m a$ be a partial algebra of more than one element,
and let $\m a_0$ be its one-point completion.
\begin{enumerate}
\item[(1)] 
Any generating set for $\m a^n$ is a generating set for 
$\m a_0^n$, and 
\item[(2)] Any 
generating set for
$\m a_0^n$ contains 
a generating set for $\m a^n$.
\end{enumerate}
In particular, least size generating sets
for $\m a^n$ and $\m a_0^n$ have the same size, and 
if $\m a^n$ or $\m a_0^n$ have any minimal
generating sets, then they are the same.
\end{thm}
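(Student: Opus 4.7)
My plan is to exploit a key structural feature of $\m a_0$: both kinds of operations propagate $0$, meaning that if some coordinate of an input tuple is $0$, the corresponding coordinate of the output is $0$. For $p_0$ this is because $D \subseteq A^m$, so any tuple involving $0$ lies outside the domain of $p$; for $\wedge$ it is immediate from the definition (including the case $0 \wedge 0 = 0$). A straightforward induction on term structure then shows that in any $\m a_0$-term evaluation, the value at coordinate $i$ is $0$ as soon as some leaf in the computation subtree is $0$ at coordinate $i$. Contrapositively, if the root value is nonzero at coordinate $i$, then every node of the computation tree is nonzero at coordinate $i$.

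For (1), let $G \subseteq A^n$ generate $\m a^n$. First I would show $A^n \subseteq \lb G \rb_{\m a_0^n}$: for $\wec{b} \in A^n$, a witnessing partial-algebra term evaluation $t^{\m a^n}(\wec{g}_1,\ldots,\wec{g}_k) = \wec{b}$ becomes, after replacing each $p$ by $p_0$, an $\m a_0$-term evaluating to the same $\wec{b}$, because $p$ and $p_0$ agree on the domain of $p$ and by hypothesis every intermediate step is defined. To reach an arbitrary $\wec{u} \in A_0^n$, let $I = \{i : u_i = 0\}$ and, using $|A| \geq 2$, pick $\wec{u}^{(1)}, \wec{u}^{(2)} \in A^n$ that agree with $\wec{u}$ off $I$ and disagree at every coordinate of $I$; then $\wec{u}^{(1)} \wedge \wec{u}^{(2)} = \wec{u}$, so $\wec{u} \in \lb G \rb_{\m a_0^n}$.

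For (2), let $H \subseteq A_0^n$ generate $\m a_0^n$ and set $G := H \cap A^n$. Take any $\wec{b} \in A^n$ and write $\wec{b} = t^{\m a_0^n}(\wec{h}_1,\ldots,\wec{h}_k)$ with $\wec{h}_j \in H$, assuming without loss of generality that every variable of the $\m a_0$-term $t$ occurs. The propagation principle applied at each coordinate forces every leaf of the tree to be nonzero, so each $\wec{h}_j$ lies in $A^n$ and hence in $G$; applied at each internal node, it forces every intermediate tuple to lie in $A^n$. At each $\wedge$-node the output is nonzero at every coordinate, which forces the two child tuples to agree coordinatewise; hence any subterm $\wedge(s_1,s_2)$ may be replaced by $s_1$ without changing the evaluation. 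After all such replacements, the term is built only from symbols $p_0$, and because every intermediate tuple remains in $A^n$ the corresponding partial operations $p$ are defined at each step. Reinterpreting the resulting term in $\m a$'s language produces a witness that $\wec{b} \in \lb G \rb_{\m a^n}$, so $G$ generates $\m a^n$.

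The ``in particular'' clauses follow easily: the equality of least-size generating sets is immediate from the inequalities $d_{\m a_0}(n) \leq d_{\m a}(n) \leq d_{\m a_0}(n)$ given by (1) and (2); and if $G$ is a minimal generating set for $\m a^n$, then (1) makes $G$ generating for $\m a_0^n$, while a proper generating subset of $G$ for $\m a_0^n$ would, by (2), contain a proper generating subset of $\m a^n$, contradicting minimality. The converse direction is symmetric. The point that I expect to require the most care is the step in (2) that $0$-propagation forces every \emph{internal} tuple (not just every leaf) to lie in $A^n$; this is exactly what lets us safely delete the $\wedge$-nodes and reinterpret the remainder inside the partial algebra $\m a$ without any partial operation being applied outside its domain.
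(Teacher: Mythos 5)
Your proposal is correct and follows essentially the same route as the paper: $0$-absorption to confine generation of $A^n$-tuples to $A^n$-tuples, and the meet of two $A^n$-modifications of a tuple to recover the tuples containing $0$. Your explicit elimination of the $\wedge$-nodes in part (2) is a more careful rendering of a step the paper passes over with "in exactly the same way," but it is the same argument.
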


\begin{proof}
In this paragraph we prove (1).
If $G\subseteq A^n$ is a generating set for $\m a^n$, then as a subset
of $\m a_0^n$ it will generate 
(in exactly the same manner) all tuples in $A_0^n$
which have no $0$'s.
If $\wec{z}\in A_0^n$ is an arbitrary 
tuple and $a, b\in A$ are distinct, let
$\wec{z}_a$ and $\wec{z}_b$
be the tuples obtained from $\wec{z}$ by replacing
all $0$'s with $a$ and $b$, respectively. Then $\wec{z}_a, \wec{z}_b
\in A^n$, so they are generated by $G$, and 
$\wec{z} = \wec{z}_a\wedge \wec{z}_b$, so $\wec{z}$ is also generated
by $G$. Hence $G$ generates all of $\m a_0^n$. 

Now we prove (2). Assume that 
$H\subseteq A_0^n$ is a 
generating set for
$\m a_0^n$. If $\wec{a}\in A_0^n$, let $Z(\wec{a})\subseteq [n]$
be the \emph{zero set} of $\wec{a}$, by which we mean the set
of coordinates where $\wec{a}$ is $0$. It is easy to see
that for any basic operation $F$ of $\m a_0$ 
it is the case that 
\begin{equation}\label{generation}
Z(\wec{a}_1)\cup \cdots \cup Z(\wec{a}_m)
\subseteq Z(F(\wec{a}_1,\ldots,\wec{a}_m)),
\end{equation}
since $0$ is absorbing for every basic operation. 
If the right-hand side is empty, then the left-hand
side is empty as well; i.e., tuples with empty zero sets
can be generated only by tuples with empty zero sets.
Said a different way, 
if $H\subseteq A_0^n$ generates $\m a_0^n$,
then $H\cap A^n$ suffices to generate
all tuples in $A^n$.
If you consider how $H$ generates elements of $A^n$
in the algebra $\m a_0^n$, it is clear that $H$ generates
those elements in the algebra $\m a^n$ in exactly the same way, so
$H$ is a generating set for $\m a^n$.
\end{proof}

\begin{cor}\label{partial_cor}
If $\m a$ is a partial algebra and $\m a_0$
is its one-point completion, then
$d_{\m a_0}(n)=d_{\m a}(n)$ for all $n\in\omega$.  \qed
\end{cor}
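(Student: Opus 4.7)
The plan is that the corollary is essentially a restatement of the last assertion in Theorem~\ref{partial}, so the proof consists of translating between the ``least size generating set'' language used there and the ``$d$-function'' language used here.

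Assuming $|A|\geq 2$ (so Theorem~\ref{partial} applies), I would first use part~(1) of Theorem~\ref{partial}: pick a generating set $G\subseteq A^n$ for $\m a^n$ of size $d_{\m a}(n)$; then $G$ also generates $\m a_0^n$, whence $d_{\m a_0}(n)\leq |G| = d_{\m a}(n)$. Next I would use part~(2): pick a generating set $H\subseteq A_0^n$ for $\m a_0^n$ of size $d_{\m a_0}(n)$; by part~(2), $H$ contains some generating set $H'$ for $\m a^n$, so $d_{\m a}(n)\leq |H'|\leq |H| = d_{\m a_0}(n)$. The two inequalities together give the desired equality for $n\geq 1$.

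For the boundary value $n=0$ both algebras have a one-element $n$-th power, so $d_{\m a}(0)=d_{\m a_0}(0)$ trivially. The only mild subtlety is the trivial/empty case $|A|\leq 1$, which is excluded from Theorem~\ref{partial}. If $A$ is empty this should be handled by convention on powers of the empty partial algebra; if $|A|=1$ then every tuple in $A^n$ equals the unique constant tuple, so $d_{\m a}(n)\leq 1$, and a direct inspection of Definition~\ref{one-point} (in particular the absorbing behavior of the meet and each $p_0$) shows that the constant tuple $\hat a$ alone still generates $\m a_0^n$: any tuple with $k$ zeros is obtained as $\hat a\wedge \hat a$ (giving $\hat 0$) followed by componentwise meets, while $\hat a$ generates itself. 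Thus $d_{\m a_0}(n)=d_{\m a}(n)$ in these degenerate cases as well. There is no real obstacle; the content is entirely in Theorem~\ref{partial}.
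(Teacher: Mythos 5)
Your main argument (for $|A|\geq 2$) is exactly the intended one: the corollary is an immediate translation of Theorem~\ref{partial} into $d$-function language, which is why the paper attaches no proof to it, and your two inequalities are the right way to spell that out. The problem is your patch for the one-element case, which contains a genuine error. By Definition~\ref{one-point}(2) the meet satisfies $a\wedge a=a$, so $\hat{a}\wedge\hat{a}=\hat{a}$, not $\hat{0}$: the meet collapses \emph{distinct} arguments to $0$, never equal ones. Consequently, when $|A|=1$ the subalgebra of $\m a_0^n$ generated by $\hat{a}$ consists only of constant tuples (every $p_0$ and $\wedge$ applied to constant tuples returns a constant tuple), so for $n\geq 2$ it is a proper subalgebra of the $2^n$-element power $\m a_0^n$. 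In fact the equality you are trying to prove is false there: $\m a_0$ is then essentially the two-element meet-semilattice with bottom $0$, each coatom of $A_0^n$ (a tuple with exactly one $0$) is meet-irreducible and cannot be produced by any $p_0$ from tuples other than itself, so every generating set of $\m a_0^n$ contains all $n$ coatoms and $d_{\m a_0}(n)\geq n$, whereas $d_{\m a}(n)\leq 1$. The corollary silently inherits the standing hypothesis $|A|>1$ from Theorem~\ref{partial}; the correct move is to carry that hypothesis along (or dismiss the trivial case as excluded), not to argue the equality in it. The $n=0$ remark and the $|A|\geq 2$ portion are fine as written.
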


\subsection{Growth rates of countably infinite algebras}

In this section we characterize the $d$-functions
of countably infinite algebras. We will see that 
there are a few obvious properties that these functions 
have, and that any function
$D\colon \omega\to \omega^+$ that has these
properties may be realized as a $d$-function.

One obvious property of $d$-functions
is that they are increasing: $m\leq n$ implies
$d_{\m a}(m)\leq d_{\m a}(n)$. The $d$-function
of a countably infinite algebra is an increasing function
from the ordered set of natural numbers, $\omega$,
to the ordered set $\omega^+=\omega\cup\{\omega\}=\{0,1,\ldots,\omega\}$,
where $d_{\m a}(n)=\omega$ means that 
$\m a^n$ is not finitely generated.
$d$-functions also have special initial values.
$\m a^0$ is a 1-element algebra, so $\m a^0$ 
is $0$-generated if $\m a$ has a nullary term
and is $1$-generated if $\m a$ has no nullary term.
Thus $d_{\m a}(0)=0$ or $1$, with the cases distinguished
according to whether $\m a$ has a nullary term.
Finally, if $\m a$ has more than one element,
then $d_{\m a}(2) > 0$, since any $0$-generated
subalgebra of $\m a^2$ is contained in the diagonal
and the diagonal is a proper subalgebra 
of $\m a^2$ when $|A|>1$.
We now prove:

\begin{thm}\label{big}
If $D\colon \omega\to \omega^+$ 
\begin{enumerate}
\item[(i)] is increasing,
\item[(ii)] satisfies $D(0)=0$ or $1$, and
\item[(iii)] satisfies $D(2) > 0$,
\end{enumerate}
then there is a countably infinite total algebra $\m a$
such that $d_{\m a}(n)=D(n)$ for all $n\in\omega$.
\end{thm}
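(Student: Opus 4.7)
The plan is to construct a countably infinite \emph{partial} algebra $\m{p}$ with $d_{\m{p}}=D$ and then invoke Corollary~\ref{partial_cor} to convert it into a total algebra with the same $d$-function. Fix a countably infinite set $A$. For each $n\geq 1$ with $D(n)$ a positive integer, choose $D(n)\cdot n$ pairwise distinct ``coordinate elements'' $g^n_{j,i}\in A$ (distinct across all triples $(n,j,i)$), designate $\wec{g}^n_j:=(g^n_{j,1},\ldots,g^n_{j,n})$, $j\in[D(n)]$, as the intended generators of $\m{p}^n$, and for each $\wec{a}=(a_1,\ldots,a_n)\in A^n$ adjoin a $D(n)$-ary partial operation $F^n_{\wec{a}}$ whose domain is the $n$-element set $\{(g^n_{1,i},\ldots,g^n_{D(n),i}):i\in[n]\}$ of ``designated columns'' and whose value at the $i$-th column is $a_i$. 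Adjoin a constant symbol when $D(0)=0$; in the degenerate situation $D(0)=D(1)=0$, adjoin a constant symbol for each element of $A$.

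Applying $F^n_{\wec{a}}$ coordinatewise to the designated generators yields $\wec{a}$, so $d_{\m{p}}(n)\leq D(n)$. For the reverse bound take $\{\wec{h}_1,\ldots,\wec{h}_k\}\subseteq A^n$ with $k<D(n)$, let $c(i):=(h_{1,i},\ldots,h_{k,i})$, $C:=\{c(i):i\in[n]\}$. If the columns $c(i)$ are not pairwise distinct the generated subalgebra is trapped in a proper diagonal of $A^n$; otherwise, via the obvious bijection $A^n\cong A^C$, the generated subalgebra corresponds to the set $T$ of traces on $C$ of partial terms in the signature of $\m{p}$. I claim, by induction on term depth, that every $f\in T$ taking $n$ distinct values is the restriction of one of the $k$ projections $\pi_1,\ldots,\pi_k$. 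Indeed, if $f=F^{m'}_{\wec{b}}(f'_1,\ldots,f'_{D(m')})$, the image of $f$ has at most $m'$ elements, forcing $m'\geq n$; the domain condition yields an injection $i'\colon[n]\to[m']$ satisfying $f'_\ell(c(i))=g^{m'}_{\ell,i'(i)}$ for all $i,\ell$; since the $g$-entries are globally distinct, the target functions $i\mapsto g^{m'}_{\ell,i'(i)}$ are pairwise distinct for different $\ell$, so the assignment ``$\ell\mapsto j$ whenever $\pi_j$ matches the $\ell$-th target'' is injective, bounding the number of projection-inputs by $k<D(m')$; no input can be constant (its target varies with $i$), so at least one input $f'_\ell$ is compound, of strictly smaller depth, and takes $n$ distinct values---by the inductive hypothesis it must then be a projection, contradicting its compoundness. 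Since $A^C$ contains countably many functions taking $n$ distinct values while $T$ contains at most $k$ such, $T\neq A^C$ and $d_{\m{p}}(n)\geq D(n)$.

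The case $D(n)=\omega$ is handled by the same descent: monotonicity of $D$ ensures no operation of arity $\geq D(n)$ exists in the signature, so every $f\in T$ taking $n$ distinct values is again forced to be a projection, and no finite $k$ suffices. The initial values $D(0)\in\{0,1\}$ and the boundary case $D(1)=0$ are controlled directly by the choice of constants, together with the hypothesis $D(2)>0$ which prevents the 0-generated subalgebra of $\m{p}^n$ for $n\geq 2$ (confined to the diagonal) from exhausting $A^n$. The main obstacle is the depth-descent argument---specifically the pigeonhole step bounding the number of projection-inputs by $k<D(m')$---which rests on the global distinctness of the coordinate elements $g^n_{j,i}$ and the resulting distinctness of candidate target functions for different indices $\ell$.
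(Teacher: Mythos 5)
Your construction is essentially the paper's own: the same partial operations $F_{\wec{a}}$ defined only on the designated generator matrix, followed by the one-point completion of Corollary~\ref{partial_cor}, with your term-depth descent playing the role of the paper's one-step claim that a tuple with all-distinct coordinates can only be produced by applying some $F_{\wec{b}}$ to \emph{all} $D(m)$ designated generators at once, whence $m\geq n$ and $D(m)<D(n)$ contradict monotonicity. The one point to tighten is the boundary case $n=1$, where ``takes $n$ distinct values'' is vacuous, the step ``no input can be constant since its target varies with $i$'' fails, and constants do lie in $T$; there you must choose the adjoined constant to name an element outside the set of coordinate elements $g^n_{j,i}$ (the paper's fresh element $y$), so that $k<D(1)$ generators together with the constant can never cover a full designated column and $\langle G\rangle$ stays finite.
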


\begin{proof}
We construct a partial algebra $\m a$
such that $d_{\m a}(n) = D(n)$ for all $n\in \omega$.
By Corollary~\ref{partial_cor}
the one-point completion of $\m a$ (Definition~\ref{one-point})
will be a total algebra with the same growth rate.

First we describe the universe of our partial algebra.
Start with a countably infinite set $X$. This set
will be a subset of the universe of $\m a$, and its main function
is to ensure that the constructed algebra is infinite.
Next, for any algebra $\m b$,
$d_{\m b}(0)=0$ happens exactly when $\m b$
has a nullary term. Hence if $D(0)=0$ and
we wish to represent $D$
as $d_{\m a}$ for some $\m a$, then we must ensure
that $\m a$ has a nullary term. 
So let $Y=\{y\}$ be a singleton set.
If we need our algebra
to have a nullary term, we will introduce a term with 
value $y$.
Finally, for each nonzero $n\in\omega$ where $D(n)$ is finite,
let $M^{(n)} = [z_{i,j}^{(n)}]$ 
be an $n\times D(n)$ matrix of elements
such that all entries
of all $M^{(n)}$'s are different from each other and are different
from the elements of $X\cup Y$.
Let $Z = \{z_{i,j}^{(n)}\}$ 
be the set of all entries appearing in these matrices, and 
take $A:=X\cup Y\cup Z$ to be the 
universe of the partial algebra.

If $D(0)=0$, then we introduce a nullary 
operation whose value is $y$. We may introduce
more nullary operations later in the case $D(0)=0$, but if $D(0)=1$ then
we do not introduce any nullary operations throughout the construction.

For each nonzero $n\in\omega$ where $D(n)$ is finite and for each tuple
$\wec{b}\in A^n$, introduce a $D(n)$-ary partial operation
$F_{\wec{b}}$ for which $F_{\wec{b}}(M^{(n)})=\wec{b}$. This means that
$F_{\wec{b}}$ has domain of size $n$, consisting of the $n$ rows of 
$M^{(n)}$, and that $F_{\wec{b}}(z_{i,1}^{(n)},\ldots,z_{i,D(n)}^{(n)})=b_i$ for each
$i=0,1,\ldots,n$. 

It is worth mentioning how to interpret the
instructions of the previous paragraph in the case where $n=1$
and $D(n)=0$. Here $M^{(n)}$ is defined to be a 
$1\times 0$ matrix, and for each $\wec{b}\in A^1=A$ we
are instructed to add a partial operation
$F_{\wec{b}}$ with the property that 
$F_{\wec{b}}(M)=\wec{b}$.  One should view 
$F_{\wec{b}}$ as a nullary partial operation
with range $\wec{b}$. Hence, in the case $(n,D(n))=(1,0)$
we are to add nullary operations naming each element of $A$.
[Consider how one might
interpret the instructions of the previous paragraph 
in the case where $n=2$
and $D(n)=0$, if such were permitted by the assumptions on $D$.
We would be instructed to add
nullary partial operations to $\m a$ 
with range $\wec{b}$ for each $\wec{b}\in A^2$. 
Such nullary operations do not exist for those
$\wec{b}\in A^2$ off of the diagonal, so we would be
unable to adhere to the instructions if we allowed $D(2)=0$. This is
the place in our construction where we make use of
the assumption that $D(2)>0$.]

Our partial algebra is $A$ equipped with all partial operations
of the type described in the previous three paragraphs.

Observe that $d_{\m a}(0) = 0$ iff $\m a$ has a nullary 
term iff $D(0)=0$, so $d_{\m a}(0)=D(0)$. 

Observe that if $D(n)=\omega$ for some $n>0$, then
none of the partial operations has $n$ distinct elements of $A$
in its image. Hence every tuple $\wec{b}\in A^n$ with distinct
coordinates must appear in any generating set for
$\m a^n$. This proves that $d_{\m a}(n)=\omega$ whenever
$D(n)=\omega$.

Observe that if $D(1)=0$, then we have added
nullary operations to $\m a$ naming each element of $A$,
so $d_{\m a}(1)=0$, too.

Now we consider generating sets for $\m a^n$ when
$n>0$ and $D(n)$ is finite and positive. In this case, 
$F_{\wec{b}}(M^{(n)}) = \wec{b}$ whenever 
$\wec{b}\in A^n$, so the columns of $M^{(n)}$
form a generating set of size $D(n)$ for $\m a^n$. 
The following claim will help us to prove that there is no smaller
generating set for $\m a^n$.

\begin{clm}
If $n>0$ and
a subset $G\subseteq A^n$ has fewer than $D(n)$ tuples
whose coordinates are distinct, then the same
is true for $\lb G\rb$.
\end{clm}

\cproof
If the claim is not true, then it must be possible to 
generate in one step
a tuple $\wec{c}\in A^n$ whose coordinates are all distinct
using other tuples, where fewer than $D(n)$ of these other tuples 
have the property that their coordinates are all distinct.
If the partial operation used is some
$F_{\wec{b}}$, $\wec{b}\in A^m$ for some $m$,
and the tuples used to generate are $\wec{x}_1,\ldots, \wec{x}_{D(m)}$,
then the following row equations must be satisfied.
\begin{equation}\label{distinct}
\small{
F_{\wec{b}}(\wec{x}_1,\ldots,\wec{x}_{D(m)})=
F_{\wec{b}}\left( 
\left[
\begin{array}{c}
x_{1,1}\\
\vdots\\
x_{n,1}
\end{array}
\right],
\ldots,
\left[
\begin{array}{c}
x_{1,D(m)}\\
\vdots\\
x_{n,D(m)}\\
\end{array}
\right]
\right) = 
\left[
\begin{array}{c}
c_1\\
\vdots\\
c_n
\end{array}
\right]=\wec{c}.
}
\end{equation}
Considering the definition of $F_{\wec{b}}$, it is clear that
the (distinct!) entries of $\wec{c}$ are among the entries
of $\wec{b}$, so $m=|\wec{b}|\geq |\wec{c}|=n$.
Moreover, the row equations $F_{\wec{b}}(x_{i,1},\ldots,x_{i,D(m)})=c_i$
can be solved in only one way, namely by using the appropriate
row of $M^{(m)}$. This forces all entries of $[x_{i,j}]$ to be distinct.
But this means there are $D(m)$ columns, $\wec{x}_j$,
whose coordinates are distinct,
and we assumed that there were fewer than $D(n)$ such columns.
Altogether this yields that $m\geq n$ and $D(m)<D(n)$, contradicting
the monotonicity of $D(n)$. The claim is proved.
\cqed

\medskip

The claim shows that $d_{\m a}(n)=D(n)$
when $n>0$ and $D(n)$ is finite and positive, 
since a subset $G\subseteq A^n$
of size less than $D(n)$ must have fewer than $D(n)$
tuples whose coordinates are distinct. Such a set cannot generate
$\m a^n$, since the generated subuniverse $\lb G\rb$ contains fewer than
$D(n)$ tuples whose coordinates are distinct while $A^n$
contains infinitely many such tuples.
\end{proof}

The construction in this proof may be modified to
give some information
about $d$-functions of finite algebras.
Namely, suppose that $D\colon \{0,1,\ldots,k\}\to \omega$
is (i)~increasing,
and satisfies (ii)~$D(0)=0$ or $1$, and
(iii)~$D(2)>0$. If one modifies the construction
in the proof by 
omitting the inclusion of the set $X$ in the universe
of $\m a$ and then adding only the partial operations 
that are nullary or of the form $F_{\wec{b}}(M^{(n)})=\wec{b}$
where $n \in \{0, 1, 2, \ldots, k\}$, then the proof shows
that there is an algebra of size 
$|Y\cup Z| = 1+\sum_{j=0}^k j\cdot D(j)$ (finite!) 
such that $d_{\m a}(n)=D(n)$ for 
$n \in \{0, 1, 2, \ldots, k\}$.
Thus there is no special behavior of $d$-functions
of finite algebras on initial segments of $\omega$.

\section{Kelly's Completeness Theorem}\label{Kelly_section}

In Subsection~\ref{Kelly_subsection}
we give a new proof of Kelly's Completeness Theorem
for basic identities. The proof involves
the construction of a model of a set of basic identities.
In Subsection~\ref{V_subsection} we construct a simpler model
by modifying the construction from the Completeness Theorem.
The simpler model is not adequate for proving the
Completeness Theorem, but it is exactly what we need for 
our investigation of growth rates.

\subsection{The Completeness Theorem for basic 
identities}\label{Kelly_subsection}

Let $\mathcal L$ be an algebraic language. Recall that
an $\mathcal L$-term is \emph{basic} if it 
contains at most one nonnullary function symbol.
An $\mathcal L$-identity $s\approx t$ is basic if both
$s$ and $t$ are basic terms. 
If $\Sigma\cup\{\varphi\}$ is a set of basic identities,
then $\varphi$ is a \emph{consequence} of $\Sigma$,
written $\Sigma\models \varphi$,
if every model of $\Sigma$ is a model of $\varphi$.

Let $C$ be the set of constant symbols of $\mathcal L$ and let
$X$ be a set of variables.
The \emph{weak closure of $\Sigma$ in the variables $X$}
is the smallest set $\overline{\Sigma}$
of basic identities containing $\Sigma$ for which 
\begin{enumerate}
\item[(i)] $(t\approx t)\in \overline{\Sigma}$ for all basic
$\mathcal L$-terms $t$ with variables from $X$.
\item[(ii)] If $(s\approx t)\in \overline{\Sigma}$, then 
$(t\approx s)\in \overline{\Sigma}$.
\item[(iii)] If $(r\approx s)\in \overline{\Sigma}$ and
$(s\approx t)\in \overline{\Sigma}$, then 
$(r\approx t)\in \overline{\Sigma}$.
\item[(iv)] 
If $(s\approx t)\in \overline{\Sigma}$
and $\gamma\colon X\to X\cup C$ is a function, then 
$(s[\gamma]\approx t[\gamma])\in \overline{\Sigma}$, where 
$s[\gamma]$ denotes the basic term obtained from 
$s$ by replacing each variable $x\in X$ with 
$\gamma(x)\in X\cup C$.
\item[(v)] 
If $t$ is a basic $\mathcal L$-term
and $(c\approx d)\in \overline{\Sigma}$ for $c, d\in C$,
then $(t\approx t')\in \overline{\Sigma}$, where
$t'$ is the basic term obtained from $t$
by replacing one occurrence of $c$ with $d$.
\end{enumerate}

These closure conditions may be interpreted as the inference rules
of a proof calculus for basic identities. Therefore, 
write $\Sigma\vdash_X \varphi$ if $\varphi$ belongs
to the weak closure of $\Sigma$ in the variables $X$.
If the set $X$ is large enough, the relation
$\vdash_X$ captures $\models$ for basic identities, as we
will prove in Theorem~\ref{kelly}.
We define $X$ to be \emph{large enough} if 
\begin{enumerate}
\item[(a)] $X$ contains at least 2 variables, 
\item[(b)] $|X|\geq {\rm arity}(F)$  for any
function symbol $F$ occurring in $\Sigma$, and 
\item[(c)] $|X|$ is at least as large as the
number of distinct variables occurring in
any identity in $\Sigma\cup\{\varphi\}$.
\end{enumerate}
Call $\Sigma$ \emph{inconsistent relative to $X$}
if $\Sigma\vdash_X x\approx y$ for distinct 
$x, y\in X$ and large enough $X$.
Otherwise $\Sigma$ is \emph{consistent relative to $X$}.

\begin{thm}[David Kelly, \cite{kelly}] 
\label{kelly}
Let $\Sigma\cup \{\varphi\}$ be a set of basic identities
and $X$ be a set of variables that is large enough.
If $\Sigma$ is consistent relative to $X$, then 
$\Sigma\vdash_X \varphi$
if and only if 
$\Sigma\models \varphi$.
\end{thm}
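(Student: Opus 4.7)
The soundness direction, $\Sigma\vdash_X\varphi\Rightarrow\Sigma\models\varphi$, will be a routine induction on derivations, since each closure rule (i)--(v) preserves semantic validity in any model of $\Sigma$: (i)--(iii) encode reflexivity, symmetry and transitivity of equality; (iv) is the standard substitution lemma for variable assignments (an assignment $\alpha$ composed with a substitution $\gamma$ is again an assignment); and (v) holds because in any model where two constants are equal, either may be substituted for the other in any term without changing its value.

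I argue the completeness direction by contrapositive. Assume $\Sigma$ is consistent relative to $X$ and $\varphi\notin\overline{\Sigma}$; I will construct a model $\m a$ of $\Sigma$ in which $\varphi$ fails. Declare $a\sim b$ on $X\cup C$ iff $(a\approx b)\in\overline{\Sigma}$; by (i)--(iii) this is an equivalence relation. Take universe $A:=(X\cup C)/{\sim}$, interpret each $c\in C$ as $[c]$, and fix a transversal $R\subseteq X\cup C$ for $\sim$ chosen so that every class containing any constant is represented by a constant---this choice is what will make rule (v) applicable in the verification below. For each non-nullary $m$-ary $F$ and each $(r_1,\ldots,r_m)\in R^m$, set $F^{\m a}([r_1],\ldots,[r_m]):=[b]$ whenever some $b\in X\cup C$ satisfies $(F(r_1,\ldots,r_m)\approx b)\in\overline{\Sigma}$, and to a fixed default class otherwise; rules (ii)--(iii) make $[b]$ independent of the choice of $b$. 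To check $\m a\models\Sigma$, I would take $(u\approx v)\in\Sigma$ and an assignment $\alpha\colon X\to A$, represent $\alpha$ by the map $\hat\alpha\colon X\cup C\to R$ that sends each element to the $R$-representative of its $\sim$-class, apply (iv) to obtain $(u[\hat\alpha|_X]\approx v[\hat\alpha|_X])\in\overline{\Sigma}$, and then apply (v) finitely often to replace any remaining non-$R$ constants, producing $(u[\hat\alpha]\approx v[\hat\alpha])\in\overline{\Sigma}$. A short case analysis on whether each of $u,v$ is atomic or of the form $F(\bar a)$, combined with the definition of $F^{\m a}$, then yields $u^{\m a}_\alpha = v^{\m a}_\alpha$. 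Finally, to verify $\m a\not\models\varphi$, I evaluate the two sides of $\varphi$ under the assignment $\alpha(x):=[x]$; the same case analysis, applied in reverse, shows that a coincidence of values would exhibit an identity of $\overline{\Sigma}$ equivalent to $\varphi$, contradicting $\varphi\notin\overline{\Sigma}$.

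I expect the principal obstacle to be the verification that $\m a\models\Sigma$: the inference system has no overt congruence rule, since (iv) substitutes only at variables and (v) only exchanges constants for constants, so reducing a generic instantiation of an identity of $\Sigma$ to one whose atomic arguments all lie in $R$ requires the cooperation of both rules, and this is exactly why the transversal must be engineered to prefer constant representatives---otherwise (v) could not do its part. The basic nature of the identities, with at most one non-nullary function-symbol layer, is what makes this one-step reduction sufficient. Consistency of $\Sigma$ relative to $X$ keeps $\m a$ nontrivial so that distinct $\sim$-classes remain distinct and the eventual failure of $\varphi$ is genuine, while the largeness of $X$ supplies enough variables to accommodate all variables of $\Sigma\cup\{\varphi\}$ and to carry out the arity-many substitutions required by (iv).
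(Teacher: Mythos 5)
Your soundness direction is fine and matches the paper's (Kelly's rules are instances of Birkhoff's). The completeness direction, however, has a fatal gap: your universe $A=(X\cup C)/{\sim}$ is too small. Because every composite basic term whose provability class contains no variable or constant is sent to a single ``fixed default class,'' your model cannot refute identities between such terms. Concretely, take $\Sigma=\emptyset$ (consistent) and $\varphi$ the identity $F(x)\approx G(x)$ for distinct unary function symbols $F,G$. Then $\Sigma\not\vdash_X\varphi$, but in your model both sides evaluate under every assignment to the default class, so $\m a\models\varphi$ and the contrapositive argument collapses. (A variant of the same problem: if the default class is $[y]$ for a variable $y$, your model also satisfies unprovable identities of the form $F(x)\approx y$.) Adding one extra ``junk'' element in place of a default class of an atom does not help, since two unprovably-equal composite terms would still both land on it.

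The missing idea is that the universe must separate the provability classes of \emph{all} basic terms, not just of the atoms: the paper takes $M=\mathcal{T}_0/{\equiv}$ where $\mathcal{T}$ is the set of all basic $\mathcal{L}$-terms in the variables $X$ (plus one absorbing element $[0]$), so that an unprovable $\varphi=(F[\alpha]\approx G[\beta])$ is refuted by the canonical valuation, the two sides taking the distinct values $[F[\alpha]]$ and $[G[\beta]]$. This larger universe is what forces the paper's extra machinery that your transversal trick avoids: since the arguments of $F^{\m m}$ may now be classes of composite terms, they must be named by fresh (``unfixed'') variables via injective partial maps $\imath\colon M\to X\cup C$, and one must prove well-definedness (Claim~\ref{well-defined}) and handle the case distinction between classes containing a term in the fixed variables, classes containing a variable, and the rest. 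Indeed, the model you build is essentially the paper's auxiliary model $\m v$ of Definition~\ref{V}, which the paper explicitly introduces as ``not adequate for proving the Completeness Theorem'' (it is only a model of $\Sigma$, useful for the growth-rate arguments), so your construction proves that consistent $\Sigma$ have models, but not that $\not\vdash$ implies $\not\models$.
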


Kelly's theorem is a natural restriction of 
Birkhoff's Completeness Theorem for equational logic
to the special case of basic identities.
However, it is in general undecidable 
for finite $\Sigma\cup\{\varphi\}$
whether $\Sigma\vdash \varphi$ using Birkhoff's
inference rules, while it is decidable
for basic identities using Kelly's restricted rules.\footnote{%
The reason
that $\Sigma\vdash_X \varphi$ is decidable with
Kelly's inference rules when $\Sigma\cup \{\varphi\}$
is finite is that deciding $\Sigma\vdash_X \varphi$ 
amounts to generating $\overline{\Sigma}$.
If $\mathcal L$ is the language whose function
and constant symbols are those occurring in $\Sigma\cup \{\varphi\}$,
$X$ is a minimal (finite) set of variables that is large enough,
and $\mathcal T$ is defined to be the set of basic
$\mathcal L$-terms in the variables $X$,
then generating $\overline{\Sigma}$ amounts to generating
an equivalence relation on the finite set $\mathcal T$
using Kelly's inference rules.}

In the proof we use a variation of Kelly's Rule (iv):
rather than use functions $\gamma\colon X\to X\cup C$
for substitutions we will use functions
$\Gamma\colon X\cup C\to X\cup C$ whose restriction to $C$ is the identity.
(That is, we replace $\gamma$ with 
$\Gamma := \gamma\cup {\rm id}|_C$.)

\begin{lm}\label{kelly_lm}
If $\Sigma\vdash_X x\approx h$ for some
basic term $h$ in which $x$ does not occur, 
then $\Sigma$ is inconsistent relative to any set $X$
containing a variable other than $x$.
\end{lm}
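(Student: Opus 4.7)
The plan is straightforward: apply the substitution rule (iv) to specialize $x$ to a different variable of $X$, then chain the result back together with the original identity via symmetry (ii) and transitivity (iii) to force an identification of two distinct variables.

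Concretely, I would fix a variable $y \in X$ with $y \neq x$, which exists by hypothesis, and define a substitution $\gamma\colon X \to X\cup C$ by $\gamma(x) = y$ and $\gamma(w) = w$ for every $w \in X\setminus\{x\}$. Because $x$ does not occur in $h$, and $\gamma$ fixes every other variable of $X$ (and, under the $\Gamma := \gamma\cup\id|_C$ convention just introduced before the lemma, every constant as well), applying $\gamma$ to $h$ leaves every symbol of $h$ untouched, so $h[\gamma]=h$. On the other hand $x[\gamma]=y$. Closure rule~(iv) applied to $x\approx h\in\overline{\Sigma}$ then delivers $y\approx h\in\overline{\Sigma}$.

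From $x\approx h\in\overline{\Sigma}$, rule~(ii) gives $h\approx x\in\overline{\Sigma}$, and rule~(iii) applied to $y\approx h$ and $h\approx x$ yields $y\approx x\in\overline{\Sigma}$, i.e., $\Sigma\vdash_X y\approx x$. Since $y$ and $x$ are distinct elements of $X$, this is precisely the defining condition for $\Sigma$ to be inconsistent relative to $X$.

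There is no real obstacle here: the single point that deserves care is the equality $h[\gamma]=h$, which uses both that $x$ is absent from $h$ and that $\gamma$ acts as the identity on $(X\cup C)\setminus\{x\}$. The argument does not require $h$ to contain any variables --- if $h$ is a closed basic term, $h[\gamma]=h$ holds trivially and the rest of the derivation is unchanged.
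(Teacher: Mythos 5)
Your proof is correct and follows essentially the same route as the paper's: substitute $x\mapsto y$ via Rule~(iv) to obtain $y\approx h$ (using that $x$ is absent from $h$ so $h$ is unchanged), then combine with $x\approx h$ by symmetry and transitivity to derive an identity between the distinct variables $x$ and $y$. The only difference is cosmetic (the paper chains to get $x\approx y$ rather than $y\approx x$), and your extra care about $h[\gamma]=h$, including the closed-term case, is a harmless elaboration of what the paper leaves implicit.
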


\begin{proof}
Append to a $\Sigma$-proof of $x\approx h$ 
the formulas $(y\approx h)$ for some $y\in X\setminus \{x\}$ (Rule (iv)); 
$(h\approx y)$ (Rule (ii)); and
$(x\approx y)$ (Rule (iii)).
\end{proof}

\begin{proof}[Proof of Theorem~\ref{kelly}]
Kelly's inference rules are sound, since they are
instances of Birkhoff's inference rules for equational logic.
Hence $\Sigma\vdash_X \varphi$ implies $\Sigma\models \varphi$
for any $X$.

Now assume that $\Sigma{\not{\vdash_X}}\varphi$, 
where $X$ is large enough
and $\Sigma$ is consistent relative to $X$.
We construct a model of $\Sigma\cup \{\neg\varphi\}$ to 
show that $\Sigma{\not{\models}}\varphi$.
Let $\mathcal T$ be the set of basic $\mathcal L$-terms
in the variables $X$,
and let $\equiv$ be the equivalence relation on $\mathcal T$
defined by Kelly provability: i.e., $s\equiv t$ if and only if 
$\Sigma\vdash_X s\approx t$. Write $[t]$ for the $\equiv$-class
of $t$. Now extend $\mathcal T$ to a set
${\mathcal T}_0 = {\mathcal T}\cup \{0\}$
where $0$ is a new symbol, and extend $\equiv$
to this set by taking the equivalence class
of $0$ to be $\{0\}$.

The universe of the model will be the set $M:={\mathcal T}_0/{\equiv}$
of equivalence classes of  ${\mathcal T}_0$ under $\equiv$.
We interpret a constant symbol $c$ as the element
$c^{\m m}:=[c]\in M$. Now let $F$ be an $m$-ary function symbol
for some $m>0$. The natural idea for interpreting $F$
as an $m$-ary operation on this set
is to define $F^{\m m}([a_1],\ldots,[a_m]) = [F(a_1,\ldots,a_m)]$.
However, this does not work, since $F(a_1,\ldots,a_m)$ will not
be a basic term unless all the $a_i$'s 
belong to $X\cup C$.
Nevertheless, we shall follow this idea as far as it takes
us, and when we cannot apply it to assign a value to
$F^{\m m}([a_1],\ldots,[a_m])$ we shall assign the value $[0]$.

Choose and fix a well-order $<$ of the set 
$C$ of constant symbols of $\mathcal L$.
Let $\mathcal I$ be the set of injective partial 
functions $\imath\colon M\to X\cup C$ that satisfy the following
conditions:
\begin{enumerate}
\item[(1)] If 
a class 
$[t]\in M$ in the domain of $\imath$ 
contains a constant 
symbol, $c\in C$, then
$\imath[t] = d$ where $d\in C$ is the least 
element in $[t]\cap C$ under $<$.
\item[(2)] If a class
$[t]$ in the domain of $\imath$
contains a variable, $x\in X$, then
$\imath[t] = x$.
\item[(3)] If a class $[t]$ in the domain of $\imath$
fails to contain a variable
or constant symbol, then $\imath[t]\in X$.
\end{enumerate}
According to Lemma~\ref{kelly_lm},
the consistency of $\Sigma$ implies that any class $[t]$
contains at most one variable, and if $[t]$
contains a constant symbol, then $[t]$ contains
no variable. 
Hence there is no ambiguity in conditions (1) and (2).

If $S\subseteq M$ has size at most $|X|$, then $S$
is the domain of some $\imath\in \mathcal I$.

If $S\subseteq M$ and a class $[t]\in S$
contains a variable $x$, then call $x$ a \emph{fixed} variable
of $S$. Any other variable is an \emph{unfixed} variable of $S$.

Now we define how to interpret an $m$-ary function symbol $F$
as an $m$-ary operation on the set $M$.
Choose any $([a_1],\ldots,[a_m])\in M^m$, 
then choose $\imath\in \mathcal I$
that is defined on $S:=\{[a_1],\ldots,[a_m]\}$.
Note that $f:=F(\imath[a_1],\ldots,\imath[a_m])$ is a basic term, since
it is a function symbol applied to elements of $X\cup C$. We refer
to this term $f$ to define $F^{\m m}([a_1],\ldots,[a_m])$.
\begin{enumerate}
\item[{\rm Case 1.}] 
(The class $[f]$ contains a term $h$ whose only variables
are among the fixed variables of $S$.) 
Define $F^{\m m}([a_1],\ldots,[a_m]) = [f]$.
\item[{\rm Case 2.}] 
($[f]$ contains a variable.)
If $x$ is a variable in $[f]$, 
then $\Sigma\vdash_X f\approx x$. 
Since $\Sigma$ is consistent, Lemma~\ref{kelly_lm} proves that
$x$ must occur in $f$,
i.e., $x=\imath[a_k]$ for some $k$. 
Hence 
\[
\Sigma\vdash_X F(\imath[a_1],\ldots,\imath[a_m])\approx \imath[a_k]
\]
for some $k$. In this case we define 
$F^{\m m}([a_1],\ldots,[a_m])=[a_k]\;(=\imath^{-1}(x)$.)
\item[{\rm Case 3.}] (The remaining cases.)
Define $F^{\m m}([a_1],\ldots,[a_m]) = [0]$.
\end{enumerate}

Before proceeding, we point out that there is overlap
in Cases 1 and 2, but no conflict in the definition
of $F^{\m m}([a_1],\ldots,[a_m])$.
If $[f]$ contains 
a term $h$ whose variables are fixed variables of $S$
and $[f]$ also contains
a variable $x$, then $\Sigma\vdash_X f\approx x$ and
$\Sigma\vdash_X h\approx x$.
The consistency of $\Sigma$ forces $x$ to be a common
variable of $f$ and $h$, and (since only fixed variables
of $S$ occur in $h$) to be a fixed
variable of $S$. In this situation, Case 1 defines
$F^{\m m}([a_1],\ldots,[a_m])=[f]=[x]$ while
Case 2 defines 
$F^{\m m}([a_1],\ldots,[a_m])=\imath^{-1}(x)=[x]$.

\begin{clm}\label{well-defined}
$F^{\m m}\colon M^m\to M$ is a well-defined function.
\end{clm}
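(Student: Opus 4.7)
The plan is to establish two things: (i) for every $([a_1],\ldots,[a_m]) \in M^m$ there exists $\imath \in \mathcal{I}$ whose domain contains $S := \{[a_1],\ldots,[a_m]\}$, and (ii) the case assignment defining $F^{\m m}([a_1],\ldots,[a_m])$ is independent of the choice of such $\imath$. Existence in (i) follows from a counting argument: conditions~(1) and~(2) force $\imath$ on each class of $S$ containing a constant (pick the $<$-least constant) or a fixed variable, while the remaining type-3 classes of $S$ can be injected into $X$ by condition~(3), which is possible because $|S|\leq m \leq |X|$ by the ``large enough'' hypothesis.

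For (ii), suppose $\imath_1,\imath_2 \in \mathcal{I}$ are both defined on $S$. Lemma~\ref{kelly_lm} and consistency of $\Sigma$ ensure that any class contains at most one variable, so $\imath_1$ and $\imath_2$ agree on every class of $S$ containing a constant or a variable, and they differ only on type-3 classes. Define a substitution $\gamma\colon X \to X\cup C$ by $\gamma(\imath_1[a_i]) = \imath_2[a_i]$ whenever $[a_i]$ is a type-3 class of $S$, and $\gamma(y) = y$ for all other $y\in X$. Injectivity of $\imath_1$ on $S$ and the fact that type-3 images avoid constants and fixed variables make $\gamma$ well-defined. Writing $f_j := F(\imath_j[a_1],\ldots,\imath_j[a_m])$, direct inspection gives $f_1[\gamma] = f_2$.

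Now I run the three cases. In Case~1 for $\imath_1$, $[f_1]$ contains a term $h$ whose variables are fixed variables of $S$, and hence $h[\gamma]=h$; applying Rule~(iv) to $\Sigma\vdash_X f_1\approx h$ yields $\Sigma\vdash_X f_2\approx h$, so Case~1 also applies to $\imath_2$ with the same $h$, and $[f_1]=[h]=[f_2]$. In Case~2 for $\imath_1$, $[f_1]$ contains a variable $x$; by Lemma~\ref{kelly_lm} $x$ must actually occur in $f_1$, forcing $x=\imath_1[a_k]$ for some $k$; applying Rule~(iv) gives $\Sigma\vdash_X f_2\approx \imath_2[a_k]$, which is again a variable, so Case~2 applies to $\imath_2$ as well with the same index $k$, and both values equal $[a_k]$. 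In Case~3 for $\imath_1$, the symmetric analysis with the roles of $\imath_1,\imath_2$ swapped (using the substitution $\gamma'$ obtained from the inverse correspondence on type-3 classes) rules out Cases~1 and~2 for $\imath_2$, so Case~3 applies there as well and both yield $[0]$.

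The main technical obstacle will be bookkeeping: verifying that $\gamma$ really is a function on $X$ (for which both the injectivity of $\imath_1$ restricted to type-3 classes and the disjointness of its image from $C$ and the fixed variables are essential), and checking in Case~2 that the variable $x$ must literally appear among the $\imath_1[a_k]$---here the consistency hypothesis on $\Sigma$, via Lemma~\ref{kelly_lm}, is indispensable.
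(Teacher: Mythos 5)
Your proposal is correct and follows essentially the same route as the paper: existence of $\imath$ via the ``large enough'' condition, a substitution $\gamma$ carrying $f_1$ to $f_2$ that is the identity on constants and fixed variables of $S$ (your observation that injectivity of $\imath_1$ forces the type-3 images to avoid the fixed variables is exactly the point the paper uses when it writes $\jmath=\Gamma\circ\imath$), and the same three-case analysis with Case~3 handled by symmetry. The only cosmetic difference is that the paper first restricts $\imath,\jmath$ to have domain exactly $S$ before building $\Gamma$, whereas you build $\gamma$ directly from the values on $S$; both work.
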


\cproof
Choose $([a_1],\ldots,[a_m])\in M^m$
and define $S=\{[a_1],\ldots,[a_m]\}$.
There exist elements of $\mathcal I$ defined on
$S$, because this
set has size $\leq {\rm arity}(F)\leq |X|$.
Suppose that $\imath, \jmath\in \mathcal I$
are both defined on this set. 
Let $f = F(\imath[a_1],\ldots,\imath[a_m])$ and 
$g = F(\jmath[a_1],\ldots,\jmath[a_m])$.
To show that $F^{\m m}([a_1],\ldots,[a_m])$
is uniquely defined it suffices
to show that the same value is assigned
whether we refer to the term $f$ or the term $g$.

In all cases of the definition
of $F^{\m m}([a_1],\ldots,[a_m])$,
the assigned value depends only on 
the term $f = F(\imath[a_1],\ldots,\imath[a_m])
= F(\imath|_S[a_1],\ldots,\imath|_S[a_m])$.
Thus, to complete the proof of Claim~\ref{well-defined},
we may replace both $\imath$ and $\jmath$
by $\imath|_S$ and $\jmath|_S$ and assume that
$\imath$ and $\jmath$ have domain $S$.
Now $\imath$ and $\jmath$ are injective functions 
from $S$ into $X\cup C$, and $\imath[t] = \jmath[t]$
whenever $[t]\in S$ and $[t]$ contains a constant
symbol or a fixed variable of $S$. 
When $\imath[t]\neq \jmath[t]$,
then both are unfixed variables of $S$.
In this situation, there is a function
$\Gamma\colon X\cup C\to X\cup C$ that is the identity
on $C$ and on the fixed variables of $S$
for which $\jmath = \Gamma\circ \imath$.
Hence $f[\Gamma]=g$ and, 
if $h$ is a term whose only variables are fixed
variables of $S$, then $h[\Gamma] = h$.

\begin{enumerate}
\item[{\rm Case 1.}] 
($[f]$ contains a term $h$ whose only variables
are among the fixed variables of $S$.) 
Here
$
\Sigma\vdash_X f=F(\imath[a_1],\ldots,\imath[a_m])\approx h.
$
Append to a $\Sigma$-proof of 
$f\approx h$ the formula
$f[\Gamma]\approx h[\Gamma]$ (Rule (iv)).
Since $f[\Gamma]=g$ and $h[\Gamma]=h$, this is a proof
of $g\approx h$. Next append
$h\approx g$ (Rule (ii)) and
$f\approx g$ (Rule (iii)).
We conclude that $[f]=[g]$, so the value $[f]$ assigned to 
$F^{\m m}([a_1],\ldots,[a_m])$ using $\imath$ is the same as the value
$[g]$ assigned using $\jmath$.
\item[{\rm Case 2.}] 
($[f]$ contains a variable.)
If $x\in X$ is a variable in $[f]$, then 
$x=\imath[a_k]$ for some $k$ and 
$
\Sigma\vdash_X F(\imath[a_1],\ldots,\imath[a_m])\approx \imath[a_k]
$
for this $k$. 
Append to a $\Sigma$-proof of 
$f\approx x$ the formula
$f[\Gamma]\approx x[\Gamma]$ (Rule (iv)).
Since $f[\Gamma]=g$ and $x[\Gamma]=\jmath[a_k]$, 
we conclude that 
$
\Sigma\vdash_X F(\jmath[a_1],\ldots,\jmath[a_m])\approx \jmath[a_k]
$
for the same $k$. Whether we use $\imath$ or $\jmath$ we get
$F^{\m m}([a_1],\ldots,[a_m])=[a_k]$.
\item[{\rm Case 3.}] (The remaining cases.)
In Case 1 we showed that $[f]=[g]$ while in Case 2 we showed
that if $x$ is a variable in $[f]$, then 
$x[\Gamma]$ is a variable in $[g]$; together these show that
if $[g]$ does not contain a variable nor a term
whose only variables are among the fixed variables of $S$,
then the same is true of $[f]$. This argument
works with $f$ and $g$ interchanged, so the remaining cases are
those where both $[f]$ and $[g]$ contain no variables nor terms
whose only variables are among the fixed variables of $S$.
Whether we use $\imath$ or $\jmath$, we get
$F^{\m m}([a_1],\ldots,[a_m]) = [0]$.
\end{enumerate}
\cqed

\medskip

$\m m$ is defined.
We now argue that $\m m$ is a model of $\Sigma$.
Choose an identity $(s\approx t)\in\Sigma$.
If $s$ is an $n$-ary function symbol $F$
followed by a sequence $\alpha\colon [n]\to X\cup C$ 
of length $n$ consisting of variables and constant symbols, 
then let $F[\alpha]$ be an abbreviation for $s$.
If $s$ is a variable or constant symbol, 
then $s$ determines a function 
$\alpha\colon [1]\to X\cup C\colon 1\mapsto s$,
so abbreviate $s$ by $\diamondsuit[\alpha]$.
We will, in fact, write $s$ as $F[\alpha]$ in either
case, but will remember that $F$ may equal 
the artificially introduced symbol $\diamondsuit$.
The identity $s\approx t$
takes the form $F[\alpha]\approx G[\beta]$.

A valuation in $\m m$ is a function $v\colon X\cup C \to M$
satisfying 
$v(c)=c^{\m m}=[c]$ for each $c\in C$. To show that
$\m m$ satisfies $F[\alpha]\approx G[\beta]$
we must show that 
$F^{\m m}[v\circ \alpha] = G^{\m m}[v\circ \beta]$
for any valuation $v$.
Choose $\imath\in \mathcal I$ that is defined on the
set $\im(v\circ \alpha)\cup \im(v\circ \beta)$.
This is possible, since we assume that $|X|$
is at least as large as the number of distinct 
variables in the identity
$F[\alpha]\approx G[\beta]\in \Sigma$.
The values of $F^{\m m}[v\circ \alpha]$ 
and $G^{\m m}[v\circ \beta]$
are defined in reference to the terms 
$f:=F[\imath\circ v\circ \alpha]$
and 
$g:=G[\imath\circ v\circ \beta]$
respectively. 

\begin{clm}\label{f=g}
$[f]=[g]$.
\end{clm}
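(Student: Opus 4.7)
The plan is to exhibit a Kelly derivation of $f\approx g$ from $\Sigma$, which will immediately give $[f]=[g]$. The starting point is the axiom $F[\alpha]\approx G[\beta]\in\Sigma$, and the strategy is to first hit it with a substitution via Rule~(iv) that mimics $\imath\circ v$ on variables, and then to clean up the surviving constants with the constant-rewriting Rule~(v).

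First, I define $\gamma\colon X\to X\cup C$ by $\gamma(x):=\imath(v(x))$ for each variable $x\in \im(\alpha)\cup \im(\beta)$, and arbitrarily on the remaining variables of $X$. This is well defined because $\imath$ was chosen to be defined on $\im(v\circ\alpha)\cup\im(v\circ\beta)$. Applying Rule~(iv) to $F[\alpha]\approx G[\beta]$ yields
\[
\Sigma\vdash_X F[\alpha][\gamma]\approx G[\beta][\gamma].
\]
At every position $i$ with $\alpha(i)\in X$, the term $F[\alpha][\gamma]$ agrees with $f=F[\imath\circ v\circ\alpha]$ by the choice of $\gamma$. The only possible discrepancies occur at positions where $\alpha(i)=c\in C$: at such a position $F[\alpha][\gamma]$ still displays $c$ (since Rule~(iv) touches only variables), while $f$ displays $\imath[c]$, the $<$-least element of $[c]\cap C$.

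At every such constant position, $c$ and $d:=\imath[c]$ lie in the same $\equiv$-class, so $\Sigma\vdash_X c\approx d$; Rule~(v) then permits rewriting that single occurrence of $c$ to $d$. Iterating over all constant positions of $\alpha$ turns $F[\alpha][\gamma]$ into $f$, so $\Sigma\vdash_X F[\alpha][\gamma]\approx f$. The same argument applied to $\beta$ gives $\Sigma\vdash_X G[\beta][\gamma]\approx g$. Chaining these with the displayed identity above through Rules~(ii) and~(iii) delivers $\Sigma\vdash_X f\approx g$, so $[f]=[g]$. The degenerate case where $F$ or $G$ equals the dummy symbol $\diamondsuit$ is handled identically: the ``term'' $F[\alpha]$ is then a single variable or constant, and the same substitution followed by at most one constant rewriting produces $f$ from $F[\alpha]$. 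The only substantive step in the whole argument is the substitution; the constant cleanup is routine, and no further ideas are needed beyond the inference rules and the defining properties of $\imath$.
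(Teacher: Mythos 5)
Your proof is correct and is essentially the paper's own argument: your $\gamma$ is exactly the paper's substitution $\Gamma$ (restricted to variables), Rule~(iv) applied to the axiom gives $F[\Gamma\circ\alpha]\approx G[\Gamma\circ\beta]$, and Rule~(v) bridges the remaining discrepancy at constant positions between $F[\Gamma\circ\alpha]$ and $f=F[\imath\circ v\circ\alpha]$ (and likewise for $g$). The only difference is the order in which you present the substitution step and the constant-rewriting step, which is immaterial.
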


\cproof
Observe that $(\imath\circ v)(c) = \imath[c] = d$,
where $d\in C$ is the $<$-least constant symbol in the class
$[c]$.
If $\Gamma\colon X\cup C\to X\cup C$ is a function
that agrees with $(\imath\circ v)$ on the variables in
$\im(\alpha)\cup \im(\beta)$,
but is the identity on $C$,
then applications of Rule (v) show that
$\Sigma\vdash_X F[\imath\circ v\circ \alpha]\approx
F[\Gamma\circ \alpha]$ and
$\Sigma\vdash_X G[\imath\circ v\circ \beta]\approx
G[\Gamma\circ \beta]$.
From  Rule~(iv), the fact that
$(F[\alpha]\approx G[\beta])\in\Sigma$
implies that
$\Sigma\vdash_X F[\Gamma\circ \alpha]\approx G[\Gamma\circ \beta]$.
Hence 
\[
\Sigma\vdash_X f=F[\imath\circ v\circ \alpha]\approx 
F[\Gamma\circ \alpha]\approx
G[\Gamma\circ \beta]\approx
G[\imath\circ v\circ \beta]=g,
\] 
from which we get $[f]=[g]$.
\cqed

\medskip

We conclude the argument that $\m m$ satisfies 
$F[\alpha]\approx G[\beta]$
as follows.
\begin{enumerate}
\item[{\rm Case 1.}] 
($[f]=[g]$ contains a term $h$ whose only variables
are among the fixed variables of $S$.) 
In this case 
$F^{\m m}[v\circ \alpha]=[f]=[g]=G^{\m m}[v\circ \beta]$.
\item[{\rm Case 2.}] 
($[f]=[g]$ contains a variable.)
If $[f]=[x]=[g]$, then 
$F^{\m m}[v\circ \alpha]=\imath^{-1}(x)=G^{\m m}[v\circ \beta]$. 
\item[{\rm Case 3.}] (The remaining cases with $[f]=[g]$.)
$F^{\m m}[v\circ \alpha]=[0]=G^{\m m}[v\circ \beta]$.
\end{enumerate}

To complete the proof 
of the theorem we must show that $\m m$ does not
satisfy $\varphi$. Suppose $\varphi$
has the form $F[\alpha]\approx G[\beta]$.
Let $v$ be the canonical valuation 
\[
X\cup C \to M\colon x\mapsto [x], c\mapsto [c].
\]
Choose $\imath\in\mathcal I$
that is defined on $\im(v\circ \alpha)\cup \im(v\circ \beta)$.
It follows from the definitions that 
$\imath\circ v\colon X\cup C\to X\cup C$ fixes every variable 
in $\im(\alpha)\cup \im(\beta)$, 
while $(\imath\circ v)(c) = d$ is the $<$-least constant symbol
in the class of $c$.
If $\Gamma$ is the identity
function on $X\cup C$, then
just as in the proof of 
Claim~\ref{f=g}, we obtain 
$\Sigma\vdash_X f=F[\imath\circ v\circ \alpha]\approx 
F[\Gamma\circ \alpha] = F[\alpha]$ and 
$\Sigma\vdash_X g=G[\imath\circ v\circ \beta]\approx 
G[\Gamma\circ\beta]=G[\beta]$.
Now $[f]$ contains a term $h:=F[\alpha]$ 
whose only variables are among the fixed variables of $S=\im(\alpha)$,
so we are in Case 1 of the definition of $F^{\m m}$.
Hence $F^{\m m}(v\circ \alpha) = [F[\alpha]]$, and similarly
$G^{\m m}(v\circ \beta) = [G[\beta]]$.
Part of our assumption about $\varphi=(F[\alpha]\approx G[\beta])$ 
is that $\Sigma\not\vdash_X \varphi$, so 
$[F[\alpha]]$ and $[G[\beta]]$ are distinct elements
of $M$. Therefore, $v$ witnesses that
$\m m$ does not satisfy $\varphi$.
\end{proof}

Theorem~\ref{kelly} establishes that if $X$ and $Y$ are two sets of variables
that are large enough, then 
$\Sigma\vdash_X\varphi$ holds iff 
$\Sigma\vdash_Y\varphi$, and hence 
$\Sigma$ is consistent relative to
$X$ if and only if it is consistent relative to $Y$.
Now that the theorem is proved, we drop
the subscript in $\vdash_X$ and the phrase ``relative to $X$''
when writing about provability.

\subsection{The model $\m v$}\label{V_subsection}

Later in the paper we prove theorems about
finite algebras realizing a set $\Sigma$ of basic identities.
For this, we need to be able to construct finite
models of $\Sigma$. The model constructed in Theorem~\ref{kelly}
may be infinite, so we explain how to produce
finite models.

\begin{df}\label{V}
Let $\Sigma$ be a set of basic identities in a language $\mathcal L$
whose set of constant symbols is $C$.
Let $Y$ be a set of variables, $z$ a variable not in $Y$,
and $X$ a large enough set of variables containing
$Y\cup \{z\}$.
Let $V$ be the subset of the model $\m m$ constructed in the proof
of Theorem~\ref{kelly} consisting of 
\[
\{[y]\;|\;y\in Y\}\cup \{[c]\;|\;c\in C\}\cup \{[0]\}.
\]
Write $[Y]$ for $\{[y]\;|\;y\in Y\}$ and 
$[C]$ for $\{[c]\;|\;c\in C\}$.

As in the proof of Theorem~\ref{kelly}, let $<$ be a well-ordering
of $C$. 
If $F$ is an $m$-ary function symbol of $\mathcal L$
and $([a_1],\ldots,[a_m])\in V^n$, then 
let 
\begin{enumerate}
\item[(1)] $\imath [a_k] = d$ if $[a_k]\in [C]$ and $d$ is the 
$<$-least element of $[a_k]\cap C$,
\item[(2)] $\imath [a_k] = y$ if $[a_k]=[y]\in [Y]$, and 
\item[(3)] $\imath [a_k] = z$ if $[a_k]=[0]$.
\end{enumerate}
Define
$
F^{\m v}([a_1],\ldots,[a_m]) = [t] 
$
if there exists $t\in Y\cup C$ such that
\begin{equation}\label{V_def_term}
\Sigma\vdash F(\imath [a_1],\ldots,\imath [a_m]) \approx t,
\end{equation}
and define $F^{\m v}([a_1],\ldots,[a_m]) = [0]$
if there is no such $t$.

$\m v$ is the algebra with universe $V$ 
equipped with all operations of the form
$F^{\m v}$.
\end{df}

\begin{thm}\label{small_models}
$\m v$ is a model of $\Sigma$. 
\end{thm}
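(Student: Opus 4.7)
The plan is to adapt the argument showing $\m m \satf \Sigma$ from the proof of Theorem~\ref{kelly} to the simpler algebra $\m v$, where each operation has a canonical defining witness via the fixed injection $\imath$. Given an axiom $(F[\alpha] \approx G[\beta]) \in \Sigma$ (using the same compact notation as in the proof of Theorem~\ref{kelly}) and a valuation $v\colon X \cup C \to V$ with $v(c) = [c]$, the task reduces to comparing $F^{\m v}[v \circ \alpha]$ with $G^{\m v}[v \circ \beta]$. By Definition~\ref{V} these values are determined by the basic terms $f := F[\imath \circ v \circ \alpha]$ and $g := G[\imath \circ v \circ \beta]$, so the central task is to compare the $\Sigma$-equivalence classes of $f$ and $g$.

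The first key step is to prove $\Sigma \vdash f \approx g$. Starting from the axiom, Rule (iv) applied with a substitution $\Gamma\colon X \cup C \to X \cup C$ that acts as $\imath \circ v$ on variables and as the identity on constants yields $\Sigma \vdash F[\Gamma \circ \alpha] \approx G[\Gamma \circ \beta]$. Then Rule (v), applied once for each constant $c$ occurring in $\alpha$ or $\beta$ (using that $\Sigma \vdash c \approx \imath[c]$, where $\imath[c]$ is the $<$-least constant in $[c]$), converts $F[\Gamma \circ \alpha]$ to $f$ and $G[\Gamma \circ \beta]$ to $g$. Transitivity (Rule (iii)) delivers $\Sigma \vdash f \approx g$.

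The second key step is a short case split matching Definition~\ref{V}. If some $t \in Y \cup C$ satisfies $\Sigma \vdash f \approx t$, then $F^{\m v}[v \circ \alpha] = [t]$, and by transitivity $\Sigma \vdash g \approx t$, so $G^{\m v}[v \circ \beta] = [t]$ as well. (Well-definedness is automatic: any two such witnesses are $\Sigma$-equivalent, hence represent the same element of $V$.) If no witness exists for $f$, then by $\Sigma \vdash f \approx g$ no witness exists for $g$ either, and both interpretations default to $[0]$. Either way, the identity holds in $\m v$.

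The main obstacle, and the reason the lemma is not immediate, is the treatment of constants: the valuation $v$ forces each constant $c$ to be interpreted as $[c]$, whereas the canonical representative $\imath[c]$ used to compute $F^{\m v}$ is the $<$-least constant in $[c]$, which typically differs. This mismatch is precisely what Rule (v) is designed to absorb; it is the step that uses the structure of basic identities most essentially, and it is what permits truncating the universe of $\m m$ down to $V$ without destroying satisfaction of $\Sigma$.
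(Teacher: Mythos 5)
Your proposal is correct and follows essentially the same route as the paper: derive $\Sigma\vdash f\approx g$ via Rule (iv) together with Rule (v) for the constants (this is exactly the paper's Claim~\ref{f=g} argument), then split on whether a witness $t\in Y\cup C$ exists, noting that $f$ and $g$ have a witness simultaneously. The only cosmetic difference is that the paper first remarks that $v$ is also a valuation in $\m m$, a fact that plays no essential role in the argument.
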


\begin{proof}
Let $F[\alpha]\approx G[\beta]$ be an identity 
in $\Sigma$, and let $v\colon X\cup C\to V$ be a valuation.
We must show that 
$F^{\m v}(v\circ \alpha) = G^{\m v}(v\circ \beta)$.

The function $v$ is also a valuation
in $\m m$, because $V\subseteq M$. 
Since $\m m$ is a model of $\Sigma$, we get
$F^{\m m}[v\circ \alpha] = G^{\m m}[v\circ \beta]$.
Choose $\imath\in \mathcal I$ defined on the set
$\im(v\circ\alpha)\cup \im(v\circ\beta)$
such that $\imath[0]=z$, if $[0]$ is in this set.
Let $f = F[\imath\circ v\circ \alpha]$ and 
$g = G[\imath\circ v\circ \beta]$. 
As in the proof of Claim~\ref{f=g}, 
if $\Gamma\colon X\cup C\to X\cup C$ 
is the identity on $C$ and agrees with 
$\imath\circ v$ on the variables
in $\im(\alpha)\cup \im(\beta)$, then 
$\Sigma\vdash f \approx
F[\Gamma\circ \alpha]\approx G[\Gamma\circ \beta] \approx g$.

The term 
$F(\imath [a_1],\ldots,\imath [a_m])$
from line (\ref{V_def_term})
is none other than $f$. 
$F^{\m v}[v\circ \alpha] = [t]$ for some $t\in Y\cup C$
if and only if $\Sigma\vdash f = 
F(\imath [a_1],\ldots,\imath [a_m]) \approx t$.
But since $\Sigma\vdash f\approx g$ 
we also get 
$G^{\m v}[v\circ \beta]=[t]$. This shows that 
$F^{\m v}[v\circ \alpha]$ and $G^{\m v}[v\circ \beta]$
are equal when at least one of them is not $[0]$.
Of course, they are also equal when both of them
equal $[0]$, so $F^{\m v}(v\circ \alpha) = G^{\m v}(v\circ \beta)$.
\end{proof}

\begin{cor}\label{small_models_cor}
If $\Sigma$ is a consistent set of basic identities
in a language whose set of constant symbols is $C$, then 
$\Sigma$ has models of every cardinality strictly exceeding
$|C|$.
\end{cor}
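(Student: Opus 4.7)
The plan is to invoke Theorem~\ref{small_models}, tuning the variable set $Y$ in Definition~\ref{V} to hit the prescribed cardinality. The universe $V=[Y]\cup[C]\cup\{[0]\}$ already suggests a three-part count; the work reduces to confirming these three pieces are pairwise disjoint and then solving for $|Y|$.

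First I would verify disjointness and that $|[Y]|=|Y|$. The class $[0]=\{0\}$ (by construction in the proof of Theorem~\ref{kelly}) contains only the extra symbol $0\notin\mathcal T$, so it meets neither $[y]$ nor $[c]$. Consistency of $\Sigma$ handles the remaining potential collisions: distinct $y_1,y_2\in Y\subseteq X$ with $[y_1]=[y_2]$ would give $\Sigma\vdash y_1\approx y_2$, contradicting consistency directly; and $y\in Y$ with $[y]=[c]$ for some $c\in C$ would give $\Sigma\vdash y\approx c$ in which the basic term $c$ does not contain the variable $y$, and then Lemma~\ref{kelly_lm} would force inconsistency. Hence $|V|=|Y|+|[C]|+1$, where $|[C]|\leq|C|$ is some cardinal determined by $\Sigma$ but that we need not control precisely.

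Second, given a target $\mu>|C|\geq|[C]|$, I would solve $|Y|+|[C]|+1=\mu$ for $|Y|$. If $\mu$ is finite, set $|Y|:=\mu-|[C]|-1$, which is nonnegative since $\mu\geq|C|+1\geq|[C]|+1$. If $\mu$ is infinite, set $|Y|:=\mu$, so that $|Y|+|[C]|+1=\mu$ by cardinal arithmetic. Choose any $X\supseteq Y\cup\{z\}$ that is large enough in the sense of Theorem~\ref{kelly}; this only requires adjoining finitely many additional variables to accommodate the arities of function symbols in $\Sigma$ and the variable counts of its identities, and it has no effect on $V$. Build $\m v$ per Definition~\ref{V}; Theorem~\ref{small_models} yields $\m v\satf\Sigma$, and by construction $|V|=\mu$.

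The only step where anything nontrivial happens is the disjointness verification, which is precisely where the consistency hypothesis is used; the rest is direct application of Theorem~\ref{small_models} together with cardinal bookkeeping.
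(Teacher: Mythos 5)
Your proposal is correct and follows the paper's own (one-line) proof exactly: vary $|Y|$ in Definition~\ref{V} and apply Theorem~\ref{small_models}. The disjointness verification via consistency and Lemma~\ref{kelly_lm}, and the cardinal bookkeeping $|V|=|Y|+|[C]|+1$ with $|[C]|\leq|C|$, are precisely the details the paper leaves implicit, and you have filled them in correctly.
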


\begin{proof}
Vary the size of $Y$ in the definition of $\m v$,
and use Theorem~\ref{small_models}.
\end{proof}

Corollary~\ref{small_models_cor}
is close to the best possible result 
about sizes of models of a set of basic identities, 
as the next example shows.

\begin{exmp}\label{example_constants}
Let $C$ be a set of constant symbols
and let ${\mathcal B} = \{B_{c,d}\;|\;c, d\in C, c\neq d\}$
be a set of binary function symbols. Let 
\[
\Sigma = \{B_{c,d}(c,x)\approx x, B_{c,d}(d,x)\approx d\;|\;c, d\in C, c\neq d\}.
\]
$\Sigma$ is a consistent set of basic identities,
since if $A$ is any set containing $C$ we can interpret
each $c\in C$ in $A$ as itself and each
$B_{c,d}$ on $A$ by letting $B_{c,d}^{\m a}(c,y)=y$
and $B_{c,d}^{\m a}(x,y)=x$ if $x\neq c$.

If $\m m$ is any model of $\Sigma$ and $c^{\m m} = d^{\m m}$
for some $c, d\in C$, then the identity function
$B^{\m m}_{c,d}(c^{\m m},x)$ equals the constant function
$B^{\m m}_{c,d}(d^{\m m},x)$, so $|M|=1$. Thus elements
of $C$ must have distinct interpretations in any
nontrivial model of $\Sigma$, implying that 
nontrivial models have size at least 
$|C|$.
\end{exmp}

\section{Restrictive $\Sigma$}\label{examples}

Call a set $\Sigma$ of basic identities
\emph{nonrestrictive} 
if, whenever $\m a$ is an 
algebra,
there is an 
algebra $\m b$ realizing $\Sigma$
such that $d_{\m b}(n)=d_{\m a}(n)$. 
Otherwise $\Sigma$ is \emph{restrictive}. 

Call $\Sigma$ \emph{nonrestrictive for finite algebras}
if, whenever $\m a$ is a finite
algebra, 
there is a \underline{finite}
algebra $\m b$ realizing $\Sigma$
such that $d_{\m b}(n)=d_{\m a}(n)$. 
Otherwise $\Sigma$ is 
\emph{restrictive for finite algebras}.

It is possible for $\Sigma$ to be nonrestrictive,
yet restrictive for finite algebras. The set
$\Sigma$ from Example~\ref{example_constants}
has this property when the set of constants
is infinite (cf.\ Remark~\ref{realize}). 
But the concepts defined in the two preceding
paragraphs are close enough that the arguments of this section
apply equally well to both of them. We will see that
if only finitely many constant symbols appear
in $\Sigma$, then $\Sigma$ is restrictive if and only
if it is restrictive for finite algebras.
Both are equivalent to 
the property that $\Sigma$ entails the existence of a 
pointed cube term.

Recall from the introduction that an $m$-ary, 
$p$-pointed, $k$-cube term 
for the variety axiomatized by $\Sigma$
is an
$m$-ary term $F(x_1,\ldots,x_m)$ 
for which there is a $k\times m$ matrix 
$M=[y_{i,j}]$ of variables and 
constant symbols, where 
every column contains a symbol different from $x$,
such that $\Sigma$ proves the identities
\[
F(\wec{y}_1,\ldots,\wec{y}_m)=
F\left(
\left[\begin{array}{c}
y_{1,1}\\
\vdots\\
y_{k,1}\\
\end{array}\right],
\cdots,
\left[\begin{array}{c}
y_{1,m}\\
\vdots\\
y_{k,m}\\
\end{array}\right]\right)
\approx
\left[\begin{array}{c}
x\\
\vdots\\
x\\
\end{array}\right].
\]
In any nontrivial situation the parameters
are constrained by $m, k\geq 2$ and $p\geq 0$.

In Subsection~\ref{pointed_subsection}
we prove that if $\Sigma$ is restrictive, then
it entails the existence of a pointed
cube term. The converse is proved in 
Subsection~\ref{processing}, by showing 
that an algebra with a pointed cube term 
whose $d$-function assumes only finite values
has growth rate that is bounded above by a polynomial.
In particular, it is shown that
a finite algebra $\m a$ with a $1$-pointed $k$-cube term
satisfies $d_{\m a}(n)\in O(n^{k-1})$.
In Subsection~\ref{poly_growth} we give an example
of a $3$-element algebra with a $1$-pointed $k$-cube term whose
growth rate satisfies $d_{\m a}(n)\in \Theta(n^{k-1})$, showing
that the preceding estimate is sharp.
In Subsection~\ref{avoid} we show that any function
$D\colon \mathbf{Z}^+\to\mathbf{Z}^{\geq 0}$
that occurs as the $d$-function 
of an algebra with a pointed cube term also
occurs as the $d$-function 
of an algebra that does not have a pointed cube term.
In Subsection~\ref{exp} we describe one way of
showing that an algebra has an exponential growth rate,
and we use it to exhibit a variety containing a chain of finite
algebras $\m a_1\leq \m a_2\leq \cdots$, each one
a subalgebra of the next, where 
$\m a_i$ has logarithmic growth when $i$ is odd
and exponential growth when $i$ is even.

\subsection{Restrictive $\Sigma$ forces a 
pointed cube term}\label{pointed_subsection}

Let $\Sigma$ be a set of basic identities 
in a language $\mathcal L$ 
whose set $C$ of constant symbols is finite.
Given an algebra $\m a$ in a language disjoint from $\Sigma$
we construct another algebra
$\m a_{\Sigma}$ which realizes $\Sigma$, where 
$\m a_{\Sigma}$ is finite if $\m a$ is.

For the first step, let 
$[C] = \{[c_1],\ldots,[c_p]\}$ be the 
same set of equivalence
classes denoted by $[C]$ in Definition~\ref{V}.
These classes represent the different 
$\Sigma$-provability classes of constant symbols.
If there are $p$ such classes,
then apply the one-point completion construction 
of Subsection~\ref{partial_subsection}
$p+1$ times to $\m a$ to produce 
a sequence $\m a$, $\m a_{z_1}$, $\m a_{z_1, z_2}$, \ldots,
ending at $\m a_{z_1,\ldots,z_p,0}$.
This is an algebra whose universe
is the disjoint union of $A$ and $\{z_1,\ldots,z_p, 0\}$.

$\m a_{\Sigma}$ will be an expansion of 
$\m a_{z_1,\ldots,z_p,0}$ obtained by merging the latter algebra
with the model $\m v$ introduced in 
Definition~\ref{V}.
Let $Y$ be a set of variables satisfying
$|Y|=|A|$,
and let $[Y] = \{[y]\;|\;y\in Y\}$ be the 
set of equivalence
classes also denoted by $[Y]$ in Definition~\ref{V}.
The universe of $\m v$ is the disjoint union
$V = [Y]\cup [C]\cup \{[0]\}$.

Let $\varphi\colon [Y]\to A$ be a bijection.
Extend this to a bijection from $V=[Y]\cup [C]\cup\{[0]\}$
to $A\cup \{z_1,\ldots,z_p\}\cup\{0\}$
by defining $\varphi([c_i]) = z_i$ and $\varphi([0])=0$.
Now $\varphi$ is a bijection from the 
universe of $\m V$ to the universe of $\m a_{z_1,\ldots,z_p,0}$.
Use this bijection to transfer the operations
of $\m v$ over to $\m a_{z_1,\ldots,z_p,0}$ to create $\m a_{\Sigma}$. 
Specifically, the interpretation of the constant symbol
$c_i$ in $\m a_{\Sigma}$ will be $z_i$, and 
if $F$ is an $m$-ary function symbol of $\mathcal L$,
then 
\begin{equation}\label{Ops}
F^{\m a_{\Sigma}}(x_1,\ldots,x_m):=
\varphi(F^{\m v}(\varphi^{-1}(x_1),\ldots,\varphi^{-1}(x_m)))
\end{equation}
will be the interpretation of the symbol $F$
in $\m a_{\Sigma}$.
$\m a_{\Sigma}$ 
is the expansion of 
$\m a_{z_1,\ldots,z_p,0}$ by all constant operations 
$c_i^{\m a_\Sigma}$
and
all operations
of the form (\ref{Ops}). Under this definition
the function $\varphi$ is an isomorphism from 
$\m v$ to the $\mathcal L$-reduct 
of $\m a_{\Sigma}$.

\begin{lm}\label{nonrestrictive}
Let $\m a$ be an algebra with more than
one element and let 
$\Sigma$ be a set of basic identities
involving finitely many constant symbols.
Let $\mathcal V$ be the variety axiomatized by $\Sigma$.
The following statements about an integer $k\geq 2$ are equivalent.
\begin{enumerate}
\item[(1)]
$\mathcal V$ has a pointed $k$-cube term.
\item[(2)]
For any $n\geq k$, $A^n_{\Sigma}\setminus A^n$ 
generates $\m a^n_{\Sigma}$.
\item[(3)]
For any $n\geq k$, there is a generating set
$G(n)$ of $\m a^n_{\Sigma}$ such that 
$G(n)\cap A^n$ does not generate $\m a^n$.
\item[(4)]
$\mathcal V$ has a pointed $k$-cube term
of the form $F(x_1,\ldots,x_m)$, 
where $m\geq 2$, $F$ is a function
symbol occurring in $\Sigma$, and 
the variables $x_1,\ldots,x_m$ are distinct.
\end{enumerate}
\end{lm}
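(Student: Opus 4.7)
The plan is to establish the cycle (1) $\Rightarrow$ (2) $\Rightarrow$ (3) $\Rightarrow$ (4) $\Rightarrow$ (1). Two implications are immediate: (2) $\Rightarrow$ (3) by taking $G(n) := A^n_\Sigma \setminus A^n$ (so $G(n) \cap A^n = \emptyset$, which cannot generate the nontrivial $\m a^n$), and (4) $\Rightarrow$ (1) by weakening.

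For (1) $\Rightarrow$ (2), let $F$ be a pointed $k$-cube term with witness matrix $M = [y_{i,j}]$, and fix any surjection $r : [n] \to [k]$ for $n \geq k$. Given $\wec{a} \in A^n_\Sigma$, for each $j \in [m]$ build $\wec{b}_j \in A^n_\Sigma$ coordinate-by-coordinate by reading off row $r(i)$ of $M$: replace the entry $x$ by $a_i$, replace each constant symbol $c$ by its interpretation $c^{\m a_\Sigma}$, and replace any other variable by $0$. Since $\m a_\Sigma$ realizes $\Sigma$ as an $\mathcal{L}$-algebra via its isomorphism with $\m v$ (Theorem~\ref{small_models}), the $k$ row identities of the cube term hold coordinate by coordinate, so $F(\wec{b}_1,\ldots,\wec{b}_m) = \wec{a}$. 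For each column $j$, a non-$x$ entry in some row of $M$, combined with surjectivity of $r$, forces $(\wec{b}_j)_i$ to be a non-$A$ element at some coordinate $i$, so $\wec{b}_j \in A^n_\Sigma \setminus A^n$.

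For (3) $\Rightarrow$ (4), take $n = k$ and pick $\wec{a} \in A^k \setminus H_0$, where $H_0 := \langle G(k) \cap A^k \rangle_{\m a^k}$. The key observation is that the extended $\m a$-operations and the meet operations of $\m a_\Sigma$ leave the set $H_0 \cup (A^k_\Sigma \setminus A^k)$ invariant: $\m a$-operations stay in $H_0$ on $H_0$-inputs and absorb any non-$A$ input into a non-$A$ output, while meets produce $A^k$-tuples only when both inputs agree. Hence any derivation of $\wec{a}$ from $G(k)$ in $\m a^k_\Sigma$ must contain a boundary subterm $F(s_1,\ldots,s_m)$ with $F$ a function symbol of $\Sigma$, each $s_j$ evaluating to some $\wec{s}_j \in H_0 \cup (A^k_\Sigma \setminus A^k)$, and $F(s_1,\ldots,s_m)$ evaluating to some $\wec{r} \in A^k \setminus H_0$. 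At each coordinate $i$, the equation $F^{\m a_\Sigma}((s_1)_i,\ldots,(s_m)_i) = r_i$ translates through $\varphi^{-1}$ and the definition of $F^{\m v}$ into a $\Sigma$-provable identity $\Sigma \vdash F(\alpha_i) \approx y_{r_i}$, where $\alpha_i$ is an $m$-tuple of variables (from $Y \cup \{z\}$) and constants (from $C$), and $y_{r_i} \in Y$ is the symbolic representative of $r_i \in A$. Applying Kelly's substitution rule (iv) row by row to rename $y_{r_i}$ by the distinguished variable $x$ produces $k$ identities $\Sigma \vdash F(\alpha'_i) \approx x$, which assemble into $\Sigma \vdash F(M^*) \approx \hat{x}$ for a $k \times m$ matrix $M^*$.

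The final verification is that every column of $M^*$ has an entry different from $x$. Column $j$ is entirely $x$ precisely when $(s_j)_i = r_i$ for every $i$, i.e., when $\wec{s}_j = \wec{r}$; but $\wec{s}_j$ lies in $H_0 \cup (A^k_\Sigma \setminus A^k)$ while $\wec{r}$ lies in the disjoint set $A^k \setminus H_0$, so $\wec{s}_j \neq \wec{r}$ for every $j$. The conditions $m \geq 2$ and distinct $x_1,\ldots,x_m$ in (4) are automatic: $F$ is a function symbol of $\Sigma$ of some arity $m$, and a unary witness of this form would, via Kelly's rule (iv), force $\Sigma$ to be inconsistent. The main obstacle, in retrospect, is to see that \emph{per-row} renaming, rather than a global ``pigeonhole and rename'' strategy, avoids all degenerate columns and lets the disjointness of the two regions $H_0$ and $A^k \setminus H_0$ do all the work.
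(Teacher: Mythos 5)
Your proof is correct and follows essentially the same route as the paper's: the same cycle $(1)\Rightarrow(2)\Rightarrow(3)\Rightarrow(4)\Rightarrow(1)$, the same row-by-row solution of $F(\wec{b}_1,\ldots,\wec{b}_m)=\wec{a}$ for $(1)\Rightarrow(2)$, and the same boundary argument for $(3)\Rightarrow(4)$ (your invariant set $H_0\cup(A^k_{\Sigma}\setminus A^k)$ is the paper's proper subuniverse $S$, and the translation through $\varphi$ and $F^{\m v}$ into $\Sigma$-provable row identities is identical). The only divergence is cosmetic: in $(3)\Rightarrow(4)$ you retain all $k$ coordinate rows and use $\wec{s}_j\neq\wec{r}$ to rule out an all-$x$ column, obtaining a $k\times m$ witness matrix, whereas the paper selects one witnessing coordinate per column to build an $m\times m$ matrix with non-$x$ diagonal; your variant matches the literal ``$k$-cube'' of item (4) slightly more closely.
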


\begin{proof}

[$(1)\Rightarrow(2)$] 
Let $F(x_1,\ldots,x_m)$ be a 
pointed $k$-cube term
of the variety axiomatized by $\Sigma$.
There is a $k\times m$ matrix $M=[y_{i,j}]$ 
of variables and 
$\mathcal L$-constant symbols, where 
every column contains a symbol different from $x$,
such that $\Sigma$ proves the identities
\begin{equation}\label{Pidentities}
F(\wec{y}_1,\ldots,\wec{y}_m)=
F\left(
\left[\begin{array}{c}
y_{1,1}\\
\vdots\\
y_{k,1}\\
\end{array}\right],
\cdots,
\left[\begin{array}{c}
y_{1,m}\\
\vdots\\
y_{k,m}\\
\end{array}\right]\right)
\approx
\left[\begin{array}{c}
x\\
\vdots\\
x\\
\end{array}\right].
\end{equation}

Choose any tuple $\wec{a}\in A^n_{\Sigma}$.
Using the 
row identities of (\ref{Pidentities}),
solve the equation
$F(\wec{b}_1,\ldots,\wec{b}_m)=\wec{a}$
for the $\wec{b}_i$'s,
row by row, according to the following rules.
In the $i$-th row, 
\begin{enumerate}
\item[(a)] if $y_{i,j}=x$, then let $b_{i,j}=a_i$.
\item[(b)] if $y_{i,j} = c_r$ is a constant symbol, 
then let $b_{i,j}=z_r$ be its interpretation in $\m a_{\Sigma}$.
\item[(c)] if $y_{i,j}$ is a variable different from $x$, 
then let $b_{i,j}=0$.
\end{enumerate}
Under these choices, $\wec{b}_i\in A_{\Sigma}^n$ for all $i$
and $F(\wec{b}_1,\ldots,\wec{b}_m)=\wec{a}$.
Moreover, since each column $\wec{y}_i$
in (\ref{Pidentities}) has a symbol
different from $x$, it follows from (a)--(c) that each
$\wec{b}_i$ has a coordinate value that is in the set 
$\{z_1,\ldots,z_p,0\}$. Hence  $\wec{b}_i\in A_{\Sigma}^n\setminus A^n$
for all $i$. This shows that the arbitrarily chosen
tuple $\wec{a}\in A^n_{\Sigma}$ lies in the subalgebra
$\m a^n_{\Sigma}$ that is generated by 
$A_{\Sigma}^n\setminus A^n$.

[$(2)\Rightarrow(3)$] 
Let $G(n) = A^n_{\Sigma}\setminus A^n$.

[$(3)\Rightarrow(4)$] 
Let $G(n)$ be the generating set for $\m a^n_{\Sigma}$
that is guaranteed by Item~(3). Since $G(n)\cap A^n$ does not generate
$\m a^n$, it follows from Theorem~\ref{partial}
that $(A^n_{\Sigma}\setminus A^n)\cup G(n)$ is not a generating
set for $\m a^n_{z_1,\ldots,z_p,0}$.
Let $S$ be the proper subuniverse of 
$\m a^n_{z_1,\ldots,z_p,0}$
that is generated by
$(A^n_{\Sigma}\setminus A^n)\cup G(n)$.

Since $S$ contains $G(n)$, which generates $\m a^n_{\Sigma}$,
and contains the interpretations of the $\mathcal L$-constants,
it cannot be closed under the interpretations 
of the function symbols of $\mathcal L$. 
Hence there is a tuple $\wec{a}\notin S$, an $m$-ary function symbol $F$, 
and $m$ tuples $\wec{b}_1,\ldots, \wec{b}_m\in S$ 
such that $F^{\m a_{\Sigma}}(\wec{b}_1,\ldots,\wec{b}_m)=\wec{a}$.
Necessarily $\wec{a}\in A^n$.

Using the isomorphism $\varphi$ from $\m v$ to 
the $\mathcal L$-reduct of $\m a_{\Sigma}$, we obtain 
that there is a tuple $\wec{y} = \varphi^{-1}(\wec{a})\in 
\varphi^{-1}(A^n)=[Y]^n$ and tuples 
$\wec{v}_i = \varphi^{-1}(\wec{b}_i)\neq \wec{y}$ such that 
$
F^{\m v}(\wec{v}_1,\ldots,\wec{v}_m)=\wec{y}.
$
Since $\wec{v}_1\neq \wec{y}$, there is a coordinate $\ell$
where these tuples differ. In the $\ell$-th coordinate we have
$F^{\m v}([v_{{\ell},1}],\ldots,[v_{{\ell},m}])=[y_{\ell}]$ for 
some variable $y_{\ell}\in Y$ and some elements 
$v_{\ell,j}\in Y\cup C\cup \{0\}$ with $[v_{{\ell},1}]\neq [y_{\ell}]$.
By the definition of $\m v$, 
\begin{equation}\label{cube1}
\Sigma\vdash F(\imath [v_{\ell,1}],\ldots,\imath [v_{\ell,m}])\approx y_{\ell},
\end{equation}
where $\imath [v_{\ell,j}]=v_{\ell,j}$ when $v_{\ell,j}\in Y$,
$\imath[v_{\ell,j}]$ is a constant $\Sigma$-provably
equivalent to $v_{\ell,j}$ when $v_{\ell,j}\in C$,
and $\imath [v_{\ell,j}]=z$ is a variable not in $Y$ when 
$v_{\ell,j}=0$. 
Since $[v_{\ell,1}]\neq [y_{\ell}]$, we have $v_{\ell,1}\neq y_{\ell}$.
After renaming variables, (\ref{cube1}) can be rewritten as
\[
\Sigma\vdash F(m_{1,1},\ldots,m_{1,m})\approx x,
\]
where each $m_{1,j}$ is a variable or constant and $m_{1,1}\neq x$.
Similarly, for each $i$, the fact that $\wec{v}_i\neq \wec{y}$
produces an identity
\[
\Sigma\vdash F(m_{i,1},\ldots,m_{i,m})\approx x,
\]
where each $m_{i,j}$ is a variable or constant and $m_{i,i}\neq x$.
Thus, it is a consequence of $\Sigma$ that the row identities
of 
\[
F([m_{i,j}])\approx
\left(\begin{matrix}
x\\
\vdots\\
x
\end{matrix}
\right)
\]
hold. 
Since the diagonal elements of $[m_{i,j}]$ are not $x$,
these identities make
$F$ a pointed cube term for $\mathcal V$.

[$(4)\Rightarrow(1)$] 
This is a tautology.
\end{proof}

One consequence of Lemma~\ref{nonrestrictive}
is a procedure to decide if a strong
Maltsev condition involving only basic identities
implies the existence of a pointed cube term.

\begin{cor}\label{decide_cor}
A strong Maltsev condition defined by a set 
$\Sigma$ of basic identities entails the existence
of a pointed $k$-cube term if and only if 
it is possible to prove from $\Sigma$ that
some term of the form 
$F(x_1,\ldots,x_m)$ is a pointed 
$k$-cube term, where $m\geq 2$, $F$ is a function
symbol occurring in $\Sigma$, and 
the variables $x_1,\ldots,x_m$ are distinct.
\end{cor}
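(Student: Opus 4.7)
The plan is to derive the corollary by combining the equivalence (1)$\Leftrightarrow$(4) of Lemma~\ref{nonrestrictive} with Kelly's Completeness Theorem~\ref{kelly}; both are already established in the paper.

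For the ``if'' direction I would observe that Kelly's inference rules are instances of Birkhoff's rules and hence sound. Thus, if $\Sigma\vdash$ each row identity witnessing that $F(x_1,\ldots,x_m)$ is a pointed $k$-cube term, then $\Sigma\models$ those identities, so the variety $\mathcal V$ axiomatized by $\Sigma$ has a pointed $k$-cube term of the stated shape.

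For the converse, assume $\mathcal V$ has a pointed $k$-cube term. A strong Maltsev condition involves only finitely many constant symbols, so the hypothesis of Lemma~\ref{nonrestrictive} is met. I would pick any algebra $\m a$ with $|A|>1$ in a language disjoint from $\Sigma$ (for instance a two-element algebra with no operations) and invoke the implication (1)$\Rightarrow$(4) of the lemma. This produces a pointed $k$-cube term $F(x_1,\ldots,x_m)$ with $m\geq 2$, with $F$ a function symbol occurring in $\Sigma$, and with distinct variables $x_1,\ldots,x_m$, whose defining row identities each have the basic form $F(m_{i,1},\ldots,m_{i,m})\approx x$ with every $m_{i,j}$ a variable or constant symbol. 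Each such basic identity holds in $\mathcal V$, hence $\Sigma\models$ it. Theorem~\ref{kelly} then upgrades semantic entailment into Kelly-provability, yielding $\Sigma\vdash$ each row identity whenever $\Sigma$ is consistent relative to a large enough set of variables; if instead $\Sigma$ is inconsistent in that sense, then by Lemma~\ref{kelly_lm} and Rule~(iv) $\Sigma$ already proves every basic identity, so there is nothing further to verify.

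The main obstacle has already been absorbed into Lemma~\ref{nonrestrictive}, which does the real work of producing a pointed $k$-cube term in the special form (distinct variables, function symbol from $\Sigma$). What remains is purely the routine translation of ``holds in $\mathcal V$'' into ``Kelly-provable from $\Sigma$,'' which is exactly the content of Kelly's Completeness Theorem; no genuinely new technical difficulty arises.
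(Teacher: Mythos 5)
Your proposal is correct and follows essentially the paper's own route: the paper proves the corollary by a one-line reduction to the equivalence (1)$\Leftrightarrow$(4) of Lemma~\ref{nonrestrictive}, with the passage between semantic entailment and Kelly-provability supplied by Theorem~\ref{kelly}, exactly as you arrange it. One side remark in your treatment of the degenerate case is false, however: an inconsistent $\Sigma$ does \emph{not} Kelly-prove every basic identity, because Rules (i)--(v) contain no congruence rule permitting substitution of provably equal variables into the argument places of a function symbol, so from $x\approx y$ one cannot derive, say, $F(x_1,\ldots,x_m)\approx x_1$; in that case the variety is trivial and the corollary should be read with ``prove from $\Sigma$'' as $\models$, which is how the paper's bare citation of the lemma implicitly treats it (deferring the provability/decidability point to the remark after the corollary).
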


\begin{proof}
A strong Maltsev condition defined by a set 
$\Sigma$ of identities 
entails the existence
of a pointed $k$-cube term if and only if 
the variety axiomatized by $\Sigma$ has a 
pointed $k$-cube term, so the 
corollary follows from 
Lemma~\ref{nonrestrictive} (1)$\Leftrightarrow$(3).
\end{proof}

That the property in the theorem statement
can be decided follows from Theorem~\ref{kelly}.

The next result is the main one of the subsection.

\begin{thm}\label{nonrestrictive_thm}
Let $\Sigma$ be a set of basic identities involving 
finitely many constant symbols. If $\Sigma$
does not entail the existence of a pointed cube term,
then $\Sigma$ is nonrestrictive (and also
nonrestrictive for finite algebras).
\end{thm}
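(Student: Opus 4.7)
The plan is to show that the algebra $\m a_{\Sigma}$ constructed in the paragraphs immediately preceding the theorem is the desired witness $\m b$. First, $\m a_{\Sigma}$ realizes $\Sigma$: its $\mathcal L$-reduct is isomorphic to $\m v$ via $\varphi$ by construction, and $\m v$ is a model of $\Sigma$ by Theorem~\ref{small_models}. Second, the universe of $\m a_{\Sigma}$ is $A \cup \{z_1, \dots, z_p, 0\}$, which is finite whenever $\m a$ is finite (here we use the hypothesis that $\Sigma$ has only finitely many constants), so a single construction handles both the general and the finite-algebras statements.

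The heart of the proof is the equality $d_{\m a_{\Sigma}} = d_{\m a}$. For the upper bound I will combine two facts: $\m a_{z_1, \dots, z_p, 0}$ arises from $\m a$ by $p + 1$ iterated one-point completions, so iterating Corollary~\ref{partial_cor} gives $d_{\m a_{z_1, \dots, z_p, 0}} = d_{\m a}$; and $\m a_{\Sigma}$ is an expansion of $\m a_{z_1, \dots, z_p, 0}$, so Theorem~\ref{basic_estimates}(3) supplies $d_{\m a_{\Sigma}}(n) \leq d_{\m a}(n)$ for every $n$.

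For the lower bound at $n \geq 2$ I will invoke the hypothesis through Lemma~\ref{nonrestrictive}. The key claim is that for any generating set $G$ of $\m a_{\Sigma}^n$, the intersection $G \cap A^n$ is already a generating set of $\m a^n$; once this is established, applying it to a minimum such $G$ yields $d_{\m a}(n) \leq |G \cap A^n| \leq |G| = d_{\m a_{\Sigma}}(n)$, matching the upper bound. To prove the claim, I will assume for contradiction that $G \cap A^n$ fails to generate $\m a^n$ and then replay the construction in the proof of (3)$\Rightarrow$(4) of Lemma~\ref{nonrestrictive} at this single exponent $n$ with this specific $G$; that construction produces a pointed $n$-cube term for the variety axiomatized by $\Sigma$, which is exactly what our hypothesis excludes.

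The most delicate step, and the main obstacle I anticipate, will be the small exponents $n = 0$ and $n = 1$, where Lemma~\ref{nonrestrictive} provides no leverage. For $n = 1$ I will argue directly that an element of $A$ lying in $\lb G \rb$ (computed in $\m a_{\Sigma}$) for $G \subseteq A_{\Sigma}$ must already lie in the closure of $G \cap A$ under the operations of $\m a$, because every basic operation of $\m a_{\Sigma}$ (original $\m a$-operation, transferred $\mathcal L$-operation, or one-point-completion meet) returns a value outside $A$ as soon as one input is outside $A$; combined with the availability of the extras $z_i$ and $0$ from the constants $c_i^{\m a_{\Sigma}}$ and from a meet of two distinct generated elements, this forces $d_{\m a_{\Sigma}}(1) = d_{\m a}(1)$. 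The case $n = 0$ agrees except in the degenerate situation where $\m a$ has no nullary term while $\Sigma$ has a constant symbol, where the nullary $c_i^{\m a_{\Sigma}}$ makes $d_{\m a_{\Sigma}}(0) = 0 < 1 = d_{\m a}(0)$; addressing this boundary will require a minor adjustment of the construction (for instance, suppressing the constant operations from $\m a_{\Sigma}$'s signature and interpreting each $c_i$ through a nullary term of $\m a$ when available).
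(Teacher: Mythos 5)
Your proposal is correct and follows essentially the same route as the paper: take $\m b=\m a_{\Sigma}$, get $d_{\m b}\le d_{\m a}$ from the iterated one-point completions plus Theorem~\ref{basic_estimates}(3), and get $d_{\m a}\le d_{\m b}$ by noting that a minimum generating set $G$ of $\m b^n$ must have $G\cap A^n$ generating $\m a^n$, since otherwise the argument of Lemma~\ref{nonrestrictive} (3)$\Rightarrow$(4) would manufacture a pointed cube term (of dimension the arity of the relevant function symbol, not $n$, but this is immaterial). Your extra care at $n=0$ flags a genuine boundary convention the paper silently ignores (any algebra realizing a $\Sigma$ with constants has a nullary term, so $d(0)$ can only match when one restricts attention to $n\ge 1$); otherwise nothing differs.
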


\begin{proof}
Recall that ``$\Sigma$
is nonrestrictive'' means that 
for every algebra $\m a$ there is a
algebra $\m b$ realizing $\Sigma$
such that $d_{\m b}=d_{\m a}$, ``$\Sigma$
is restrictive'' means the opposite.

Assume that $\Sigma$ fails to 
entail the existence of a pointed cube term.
Choose $\m a$ arbitrarily and let $\m b = \m a_{\Sigma}$.
$\m b$ realizes $\Sigma$ because $\m v$
is a reduct of $\m b$ and a model of $\Sigma$. 
We argue that $d_{\m b}=d_{\m a}$.

Choose a generating set $G$ for $\m b^n$ such that
$|G|=d_{\m b}(n)$. By 
Lemma~\ref{nonrestrictive} (1)$\Leftrightarrow$(3)
we get that $G\cap A^n$ is a generating set for
$\m a^n$, so $d_{\m a}(n)\leq |G\cap A^n|\leq d_{\m b}(n)$.

Now choose a 
generating set $H$ for $\m a^n$ such that 
$|H|=d_{\m a}(n)$. Repeated use of 
Theorem~\ref{partial}~(1) shows that $H$
generates $\m a^n_{z_1,\ldots,z_p,0}$, hence also generates
$\m a^n_{\Sigma}=\m b$. This shows that
$d_{\m b}(n)\leq |H|=d_{\m a}(n)$.
\end{proof}

\begin{remark}\label{realize}
In the third paragraph of this section we stated
that the set
$\Sigma$ from Example~\ref{example_constants}
is nonrestrictive,
yet restrictive for finite algebras when
$\Sigma$ involves infinitely many constants.
Here we explain why this remark is true,
and also explain
to what degree we may remove the assumption
of finitely many constants in 
Theorem~\ref{nonrestrictive_thm}.

Let $\Sigma$ be as in Example~\ref{example_constants}
with $C$ an infinite set of constants. Let $\m a$ be any
finite algebra. There is no finite $\m b$
that realizes $\Sigma$, hence none that realizes
$\Sigma$ and satisfies 
$d_{\m b}=d_{\m a}$, since any nontrivial
model of $\Sigma$ has cardinality at least $|C|$.

On the other hand, $\Sigma$ does not entail the existence of a pointed
cube term. Without attempting to give the full argument
for this, we indicate only that if $\Sigma$ entailed
the existence of a pointed cube term, then 
(i)~one would have  
the form $B_{c,d}(x_1,x_2)$, by Lemma~\ref{nonrestrictive}, 
and 
(ii)~it could not be a projection,
so we would have to have 
$\Sigma\vdash B_{c,d}(e,x)\approx x$ and 
$\Sigma\vdash B_{c,d}(x,f)\approx x$ for some
constants $e$ and $f$, and (iii)~$\{B_{c,d}(x,f)\}$ 
is a singleton class of the weak closure
of $\Sigma$, hence we do not have $\Sigma\vdash B_{c,d}(x,f)\approx x$
after all.

Finally, we sketch how to modify the proof 
of Theorem~\ref{nonrestrictive_thm} to eliminate
the restriction to finitely many constants in the
case where the algebras may be infinite.

Recall that we started with an algebra
$\m a$, enlarged it to $\m a_{z_1,\ldots,z_p,0}$
by iterating the one-point completion
construction, and then merged it with the model $\m v$
of $\Sigma$ to create $\m a_{\Sigma}$, which realized
$\Sigma$ and had the same growth rate as $\m a$.
In this construction, we used the one-point
completion construction $p$ times, where
$p$ was the number of equivalence classes of 
constant symbols under $\Sigma$-provable equivalence.
The only thing different here is that we may not have finitely
many equivalence classes of constant symbols. However, we may well-order
the equivalence classes of constants (say, by 
stipulating that $[c]<[d]$ if the least constant
in class $[c]$ is smaller than the least constant in $[d]$
under the well-order from the proof of Kelly's Theorem).
Now, rather than using the one-point completion 
construction $p$ times, we use the idea of the construction 
exactly once
to adjoin a well-ordered set $\{0\}\cup Z$ to $\m a$ to create
$\m a_{Z,0}$. Here the well-order is $0 < z_1 < z_2 < \cdots$,
with $0$ the least element, and $\lb Z; <\rb$ is a well-ordered
set for which there is a bijection $\varphi\colon [C]\to Z$
from the set of equivalence classes of constants.
The algebra has universe $A_{Z,0}$ equal to the disjoint
union of $A$, $Z$ and $0$. If $F$ is a function
symbol in the language of $\m a$, then it is defined
on $A_{Z,0}$ by 
\[
F^{\m A_{Z,0}}(\wec{a}) = \begin{cases}
F^{\m A}(\wec{a}) & \text{\rm if $\wec{a}\in A^n$};\\
\min\{\{a_1,\ldots,a_n\}\cap (\{0\}\cup Z)\} & \text{\rm else.}
\end{cases}
\]
We also define binary operations corresponding
to the operation $x\wedge y$ of the one-point
completion, namely $x\wedge_z y$ for $z\in Z\cup \{0\}$.
Here 
\[
x\wedge_z y = \begin{cases}
x & \text{\rm if $x=y$};\\
z & \text{\rm if $x\neq y$ and $x, y\in A\cup [z)$};\\
\min\{\{x,y\}\cap (\{0\}\cup Z)\} & \text{\rm else.}
\end{cases}
\]
Arguments similar to those in Theorem~\ref{partial}
show that a generating set for
$\m a^n$ also generates $\m a_{Z,0}^n$ and
any generating set for $\m a_{Z,0}^n$
contains a generating set for $\m a^n$.
We can merge $\m a_{Z,0}$
with a model $\m v$ of $\Sigma$
from Definition~\ref{V} to obtain a model
$\m a_{\Sigma}$, as we did for the proof
of Theorem~\ref{nonrestrictive_thm}.
Using the same arguments as before, it can be shown that
$\m a_{\Sigma}$ has the same growth rate as $\m a$
unless $\Sigma$ entails the existence of a pointed
cube term.
\end{remark}

\subsection{Pointed cube terms enforce 
polynomially bounded growth}\label{processing}

In the preceding subsection  
we proved that if $\Sigma$ is restrictive, then $\Sigma$
entails the existence of a pointed cube term.
We now prove the converse by showing that
if $\m a$ is an algebra with a pointed cube
term and sufficiently many of the small powers of $\m a$
are finitely generated, then all finite powers of $\m a$
are finitely generated and 
$d_{\m a}(n)$  is bounded above by a polynomial.

\begin{thm}\label{pointed_polynomial}
Let $\m a$ be an algebra with an
$m$-ary, $p$-pointed,
$k$-cube term, with at least one
constant symbol appearing in the cube identities
(so $p\geq 1$). If $\m a^{p+k-1}$ is finitely
generated, then all finite powers of $\m a$
are finitely generated and 
$d_{\m a}(n)$ is bounded above by 
a polynomial 
of degree at most 
$\log_w(m)$, where $w = 2k/(2k-1)$.
\end{thm}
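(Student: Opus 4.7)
The plan is to establish a recursion of the form
\[
d_{\m a}(n) \leq m \cdot d_{\m a}\bigl(\lceil (2k-1)n/(2k) \rceil\bigr) + C
\]
for $n$ sufficiently large, where $C$ is a constant depending on $\m a$ and the cube term, and to iterate it. Each iteration multiplies the argument by $(2k-1)/(2k) = 1/w$, so the base value $p+k-1$ is reached after $t = \Theta(\log n)$ steps; the accumulated multiplier $m^t$ is then of order $n^{\log_w m}$, which is the desired polynomial bound.

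To set up the recursion, fix a $p$-pointed $k$-cube term $F(x_1, \ldots, x_m)$ with $k\times m$ cube matrix $M = [y_{i,j}]$, and for each column index $j$ let $K_j := \{i \in [k] : y_{i,j} = x\}$. Since every column of $M$ contains an entry different from $x$, we have $|K_j| \leq k-1$. First I would partition $[n]$ into $k$ blocks $P_1, \ldots, P_k$ of size $\lceil n/k\rceil$ and observe that the row identities of $F(M) \approx \hat{x}$ express every $\wec{a} \in A^n$ as $\wec{a} = F^{\m a^n}(\wec{b}_1, \ldots, \wec{b}_m)$, where $\wec{b}_j = \wec{b}_j(\wec{a})$ is specified column-by-column: on $P_i$ with $i \in K_j$, take $\wec{b}_j|_{P_i} = \wec{a}|_{P_i}$; on $P_i$ with $i \notin K_j$, take $\wec{b}_j|_{P_i}$ to be the constant tuple with value $c_q^{\m a}$ if $y_{i,j}$ is the constant $c_q$, and with value $c_1^{\m a}$ if $y_{i,j}$ is a non-$x$ variable (legitimate because variables may be substituted arbitrarily, and the hypothesis $p \geq 1$ supplies at least one constant).

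The central difficulty is to generate the set $B_j := \{\wec{b}_j(\wec{a}) : \wec{a} \in A^n\}$ inside $\m a^n$ using only $O\bigl(d_{\m a}(\lceil (2k-1)n/(2k)\rceil)\bigr)$ elements. In bijection $B_j$ is a copy of $A^{|P_{K_j}|}$ with $P_{K_j} := \bigcup_{i \in K_j} P_i$, but $B_j$ is typically not a subalgebra of $\m a^n$: applying a term coordinatewise perturbs the ``fixed'' coordinates, because the interpretations $c_q^{\m a}$ are nullary operations but need not generate singleton subuniverses. To circumvent this, I would refine the partition, splitting each $P_i$ into two halves to obtain $2k$ sub-blocks of size about $n/(2k)$; then for each choice of one sub-block to omit, include generators drawn from a recursive generating set of $\m a^{(2k-1)n/(2k)}$, embedded into $A^n$ using the constant tuples $(c_q^{\m a}, \ldots, c_q^{\m a})$ on the omitted sub-block. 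Because each $c_q$ is a nullary operation of $\m a$, these constant tuples lie in every subalgebra of $\m a^n$, which is what allows the cube term to recombine the pieces back into $\wec{a}$.

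The main obstacle is the bookkeeping in that last step: verifying that the generators drawn from the $2k$ recursive generating sets, together with the constant tuples $(c_q^{\m a}, \ldots, c_q^{\m a})$, actually recover every $\wec{b}_j(\wec{a})$---and hence every $\wec{a} \in A^n$---after a single application of $F$. Refining the partition into $2k$ sub-blocks rather than $k$ is exactly what accommodates both the ``free'' and ``fixed'' halves on each original block where the cube identities leave us a choice. Once the recursion is in place, the base case $d_{\m a}(p+k-1) < \infty$ (provided by the hypothesis that $\m a^{p+k-1}$ is finitely generated) combined with the iteration of the first paragraph delivers the polynomial bound of degree at most $\log_w m$, and the finite generation of all higher powers follows as a by-product.
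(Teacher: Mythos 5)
Your overall architecture --- factor $\wec{a}$ via the cube identities into $m$ simpler tuples, recurse, and read off the degree $\log_w(m)$ from the branching factor $m$ and the shrinkage factor $1/w$ --- matches the paper's, and you correctly isolate the crux: the set $B_j$ of tuples that are arbitrary on $P_{K_j}$ and carry prescribed constant values elsewhere is not a subuniverse of $\m a^n$. The gap is that your proposed repair does not repair this. The padding map $\iota_Q\colon A^{[n]\setminus Q}\to A^n$ that extends a tuple by the value $c_q^{\m a}$ on $Q$ is not a homomorphism, so if $H$ generates $\m a^{[n]\setminus Q}$, the subalgebra $\lb \iota_Q(H)\rb$ need not contain $\iota_Q(A^{[n]\setminus Q})$: writing $\wec{u}=t(\wec h_1,\dots,\wec h_r)$ and applying $t$ coordinatewise to the padded generators produces $(\wec{u},\hat d)$ with $d=t^{\m a}(c_q^{\m a},\dots,c_q^{\m a})$, and $d$ varies with the term $t$, hence with $\wec u$. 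That the constant tuple $\hat{c_q}$ itself lies in every subalgebra is beside the point; what you need is that every tuple of the form $(\wec u,\hat{c_q})$ is generated, and that is exactly the property you already observed to fail. Worse, once the padded coordinates of your surrogate for $\wec b_j$ drift from $c_q^{\m a}$ to $d$, the row identities of the cube term no longer apply, so $F$ does not recombine the surrogates back into $\wec a$. This is not bookkeeping; it is the substantive step of the proof.

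The missing idea --- the heart of the paper's argument --- is to relax ``equal to the prescribed constant on the padded coordinates'' to ``constant across each cell of a partition.'' For a partition of $[n]$ into $j$ cells, the set of tuples of $A^n$ that are constant on every cell \emph{is} a subuniverse, isomorphic to $\m a^{j}$. Taking the free coordinates of $\wec b_j$ as singleton cells and grouping the padded coordinates into at most $p$ cells according to which of the $p$ constants they carry places $B_j$ inside a subalgebra isomorphic to $\m a^{|P_{K_j}|+p}$; this is where the ``$+p$'' comes from and why the hypothesis concerns $\m a^{p+k-1}$. With this correction your recursion becomes $d_{\m a}(n)\le m\cdot d_{\m a}(\lceil(2k-1)n/(2k)\rceil+p)$ for large $n$, which still iterates to $O(n^{\log_w m})$. (The paper avoids a recursion on $d_{\m a}$ altogether: it iterates the factorization down an explicit $m$-ary ``processing template'' of depth at most $\log_w(n/k)+1$, proves that processed coordinates remain processed at every level, and invokes finite generation of $\m a^{p+k-1}$ only at the leaves, where at most $k-1$ coordinates remain free; this also disposes of the small-$n$ base cases, which your recursion would otherwise have to treat separately.)
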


The proof rests on the fact that a cube term, like
\begin{equation}\label{R}
\F\left(\begin{matrix}
1&x&2\\
x&2&3\\
\end{matrix}
\right) \approx
\left(\begin{matrix}
x\\
x\\
\end{matrix}
\right), 
\end{equation}
may be used to ``factor'' a typical tuple 
$\wec{a}\in A^n$ into simpler tuples:
\[
\F\left(
\left[
\begin{matrix}
{\mathbf 1}\\{\mathbf a}_2
\end{matrix}
\right],
\left[
\begin{matrix}
{\mathbf a}_1\\{\mathbf 2}
\end{matrix}
\right],
\left[
\begin{matrix}
{\mathbf 2}\\{\mathbf 3}
\end{matrix}
\right]
\right)=
\left[
\begin{matrix}
{\mathbf a}_1\\{\mathbf a}_2
\end{matrix}
\right].
\]
Here the $n$-tuple $\wec{a}$ has been split into
two blocks of coordinates 
of roughly equal size, $\wec{a} = \left[
\begin{matrix}
{\mathbf a}_1\\{\mathbf a}_2
\end{matrix}
\right]$,
then factored into 
$
\left[
\begin{matrix}
{\mathbf 1}\\{\mathbf a}_2
\end{matrix}\right],
\left[
\begin{matrix}
{\mathbf a}_1\\{\mathbf 2}
\end{matrix}\right],
\left[
\begin{matrix}
{\mathbf 2}\\{\mathbf 3}
\end{matrix}\right],
$
which are simpler than $\wec{a}$ in the sense that some of the
coordinates have been replaced by constants.
This factorization process can be iterated until 
the final factors 
have at most $k-1$ coordinate entries that have not
been replaced by elements from the set of constants.
The proof of the theorem 
develops such a factorization scheme under which
there are only polynomially many different types
of final factors,
and the collection of all final factors of a given type
lie in a subalgebra of $\m a^n$ isomorphic
to $\m a^{j}$ for some $j\leq p+k-1$. The set consisting of 
the generators of all of these
subalgebras is a polynomial-size generating set for $\m a^n$.

\begin{proof}
Suppose that the fact that $\F(x_1,\ldots,x_m)$
is a $p$-pointed $k$-cube term (with $p\geq 1$)
is witnessed by identities 
$\F(M)\approx [x,\ldots,x]^{\mathsf T}$,
where $M$ is a $k\times m$ matrix of variables and 
constant symbols, with at least one constant symbol,
where each column of $M$ contains a symbol
that is not $x$. 
Choose a constant symbol $c$ appearing in $M$, 
replace all instances of 
variables in $M$ that are not $x$ by $c$.
This produces another matrix $R$
with no variables other than $x$
which also witnesses that $\F$ is a $p$-pointed $k$-cube term.
The order of the 
$k$ rows identities, $\F(R)\approx [x,\ldots,x]^{\mathsf T}$,
is fixed once and for all.

We will refer to the function $\lambda\colon [m]\to [k]$
from the column indices to the row indices defined by the property
that $\lambda(j)=i$ exactly when $i$ is the least
index such that $R$ has a constant symbol
in its $i,j$-th position. Such $\lambda$ exists
because every column of $R$ contains at least
one constant symbol. (For the cube term in the example
immediately following the theorem statement
$\lambda\colon [3]\to [2]$ is the function
$\lambda(1)=\lambda(3) = 1, \lambda(2) = 2$.)

The factoring, or ``processing'', of tuples in $A^n$ will make use of 
an $m$-ary tree which we refer to as
the \emph{(processing) template}. We refer to nodes 
of the template by their
\emph{addresses}, which are finite strings in the alphabet
$[m] = \{1,\ldots,m\}$. The root node 
has empty address, and is denoted
$\wec{n}_{\emptyset}$. If $\wec{n}_{\sigma}$ is 
the node at address $\sigma$, then its children
are the nodes $\wec{n}_{\sigma 1},\ldots, \wec{n}_{\sigma m}$.

Each node $\wec{n}$ of the template is labeled 
by a subset $\ell(\wec{n})\subseteq [n]$.
(Recall that $n$ is the number 
appearing in the exponent of $A^n$.)
To define the labeling function $\ell$
we first specify a fixed
method for partitioning some subsets $U\subseteq [n]$.
Given a subset $U=\{u_1,\ldots,u_r\}\subseteq [n]$, 
consider it to be a linearly
ordered set $u_1 < \ldots < u_r$ under the order
inherited from $[n]$. Define
$\pi(U) = (U_1,\ldots,U_k)$ to be the ordered partition
of $U$ into $k$ consecutive nonempty intervals
that are as
equal in size as possible. 
In more detail, let 
\[
\pi(U)=(U_1,\ldots,U_k)=
(\{u_1,\ldots,u_{i_1}\},\{u_{i_1+1},\ldots,u_{i_2}\},\ldots,
\{u_{i_{k-1}+1},\ldots,u_{i_k}=u_r\}),
\]
where 
\[
u_1<\cdots<u_{i_1}<u_{i_1+1}<\cdots<u_{i_2}<u_{i_{k-1}+1}<\cdots <
u_{i_k}=u_r
\] 
(i.e., the cells of the partition are consecutive nonempty intervals)
and 
\[
|U_1|\geq \cdots \geq |U_k|\geq |U_1|-1
\]
(i.e., the cells are as equal sized as possible).
The $k$ appearing here as the number of cells of the partition
is the same $k$ as the one in the assumption that
$\F$ is a $k$-cube term.
In order for $\pi(U)$ to be defined, it is necessary
that $|U|\geq k$.

As mentioned earlier, the label on node $\wec{n}_{\sigma}$ 
will be 
some subset $\ell(\wec{n}_{\sigma})\subseteq [n]$.
Recursively define the labels as follows:
\begin{enumerate}
\item $\ell(\wec{n}_{\emptyset})=\emptyset$.
\item If all nodes between 
$\wec{n}_{\sigma}$ and $\wec{n}_{\emptyset}$ are labeled, 
$V$ is the union of labels
occurring between $\wec{n}_{\sigma}$
and the root $\wec{n}_{\emptyset}$, and
$\pi([n]\setminus V) = (U_1,\ldots,U_k)$,
then $\ell(\wec{n}_{\sigma i})= U_{\lambda(i)}$.
\end{enumerate}
In (2),
if $[n]\setminus V$ has fewer than $k$ elements, then 
it is impossible to partition it into $k$ nonempty intervals,
in which case there do not exist sufficiently
many labels for potential
children. In this case, we do not include
any descendants of $\wec{n}_{\sigma}$ in the template.

Let's illustrate our progress with the example 
started back at line (\ref{R}).
The following picture 
depicts the processing template in the case
$[n]=[5]=\{1,2,3,4,5\}$.
(Recall that $\lambda(1)=\lambda(3) = 1, \lambda(2) = 2$.)

\bigskip

\begin{center}
\begin{picture}(300,150)
\setlength{\unitlength}{1mm}

\put(51,46){\tiny{$\emptyset$}}
\put(48,52){$\wec{n}_{\emptyset}$}

\put(22,34){\tiny{$\{1, 2, 3\}$}}
\put(18,37){$\wec{n}_{1}$}

\put(45,37){$\wec{n}_{2}$}
\put(51,34){\tiny{$\{4,5\}$}}

\put(78.5,37){$\wec{n}_{3}$}
\put(82,34){\tiny{$\{1, 2, 3\}$}}

\put(-14,22){$\wec{n}_{11}$}
\put(-12,17){\tiny{$\{4\}$}}

\put(0,22){$\wec{n}_{12}$}
\put(3,17){\tiny{$\{5\}$}}

\put(14,22){$\wec{n}_{13}$}
\put(18,17){\tiny{$\{4\}$}}

\put(30,22){$\wec{n}_{21}$}
\put(31,17){\tiny{$\{1, 2\}$}}

\put(44,22){$\wec{n}_{22}$}
\put(52,18){\tiny{$\{3\}$}}

\put(65,22){$\wec{n}_{23}$}
\put(61,17){\tiny{$\{1, 2\}$}}

\put(81,22){$\wec{n}_{31}$}
\put(78,17){\tiny{$\{4\}$}}

\put(94,22){$\wec{n}_{32}$}
\put(93,17){\tiny{$\{5\}$}}

\put(109,22){$\wec{n}_{33}$}
\put(108,17){\tiny{$\{4\}$}}

\put(27,6){$\wec{n}_{221}$}
\put(33,1){\tiny{$\{1\}$}}

\put(41.5,6){$\wec{n}_{222}$}
\put(48,1){\tiny{$\{2\}$}}

\put(66,6){$\wec{n}_{223}$}
\put(63,1){\tiny{$\{1\}$}}

\put(50,50){\line(-2,-1){30}}
\put(50,50){\line(0,-1){15}}
\put(50,50){\line(2,-1){30}}

\put(20,35){\line(-2,-1){30}}
\put(20,35){\line(-1,-1){15}}
\put(20,35){\line(0,-1){15}}

\put(50,35){\line(-1,-1){15}}
\put(50,35){\line(0,-3){15}}
\put(50,35){\line(1,-1){15}}

\put(80,35){\line(2,-1){30}}
\put(80,35){\line(1,-1){15}}
\put(80,35){\line(0,-1){15}}

\put(50,20){\line(-1,-1){15}}
\put(50,20){\line(0,-3){15}}
\put(50,20){\line(1,-1){15}}

\put(50,50){\circle*{1}}

\put(20,35){\circle*{1}}
\put(50,35){\circle*{1}}
\put(80,35){\circle*{1}}

\put(-10,20){\circle*{1}}
\put(5,20){\circle*{1}}
\put(20,20){\circle*{1}}

\put(35,20){\circle*{1}}
\put(50,20){\circle*{1}}
\put(65,20){\circle*{1}}

\put(80,20){\circle*{1}}
\put(95,20){\circle*{1}}
\put(110,20){\circle*{1}}

\put(35,5){\circle*{1}}
\put(50,5){\circle*{1}}
\put(65,5){\circle*{1}}

\end{picture}
\end{center}

Now we define precisely what is meant by processing.
Let $P=\{c_1,\ldots,c_p\}$ be the constant
symbols appearing in the cube identities
for $\F$.
A tuple $\wec{a}\in A^n$ is \emph{processed for node $\wec{n}_{\sigma}$} 
if there is a constant symbol $c\in P$
such that the $i$-th coordinate
of $\wec{a}$ is $c^{\m a}$ for all $i\in \ell(\wec{n}_{\sigma})$.
A tuple $\wec{a}$ is \emph{fully processed} if there is a 
path through the template from the root to a leaf
such that $\wec{a}$ is processed for each node in the path.

The processing template describes, in reverse order, 
a particular way to generate tuples in $\m a^n$.
Given a tuple $\wec{a}\in A^n$, we assign it
to the root $\wec{n}_{\emptyset}$ and denote it 
$\wec{a}_{\emptyset}$. This tuple 
$\wec{a} = \wec{a}_{\emptyset}$ is already processed for
$\wec{n}_{\emptyset}$, since this is an empty requirement.
Now, for each address $\sigma$ of a node in the template, 
we will construct $\wec{a}_{\sigma 1}, \ldots, \wec{a}_{\sigma m}$
from $\wec{a}_{\sigma}$ so that 
(i)~$\F^{\m a}(\wec{a}_{\sigma 1}, \ldots, \wec{a}_{\sigma m}) = \wec{a}_{\sigma}$,
and (ii)~each $\wec{a}_{\sigma i}$ is processed at
all nodes between $\wec{n}_{\sigma i}$ and $\wec{n}_{\emptyset}$.
Assign $\wec{a}_{\sigma i}$ to $\wec{n}_{\sigma i}$.
The original tuple $\wec{a}$ can be generated via
$\F^{\m a}$ by the fully processed tuples derived from
$\wec{a}$ in this way. The following claim is the heart of this argument.

\begin{clm}
Suppose that $\wec{n}_{\sigma}$ is an internal
node of the processing template.
Given an arbitrary tuple $\wec{a}\in A^n$, 
there exist tuples
$\wec{b}_{1}, \ldots, \wec{b}_{m}$ such that 
\begin{enumerate}
\item
$\F^{\m a}(\wec{b}_{1}, \ldots, \wec{b}_{m}) = \wec{a}$.
\item 
$\wec{b}_{i}$ is processed for node $\wec{n}_{\sigma i}$ for 
$i=1,\ldots,m$.
\item 
If $\wec{n}$ is a node between 
$\wec{n}_{\sigma}$ and 
$\wec{n}_{\emptyset}$, and $\wec{a}$ is processed
for $\wec{n}$, then each
$\wec{b}_{i}$ is also processed for $\wec{n}$
for $i=1,\ldots,m$.
\end{enumerate}
\end{clm}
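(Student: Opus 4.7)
The plan is to build each $\wec{b}_i$ coordinate-by-coordinate, using a single row of $R$ at each coordinate $t$ to invert $\F^{\m a}$ at $t$. Concretely, I will select a function $j\colon[n]\to[k]$ assigning a row to each coordinate, and then define $(b_{1,t},\ldots,b_{m,t})$ as the tuple obtained from row $j(t)$ of $R$ by substituting $x\mapsto a_t$ and replacing each constant symbol by its interpretation in $\m a$. The row identity $\F(R_{j(t),1},\ldots,R_{j(t),m})\approx x$ then makes $\F^{\m a}(b_{1,t},\ldots,b_{m,t})=a_t$ automatically, so property (1) comes for free once $j$ has been chosen on all of $[n]$.

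The construction reduces to the definition of $j(t)$. Let $V$ denote the union of the labels of the nodes on the path from $\wec{n}_{\emptyset}$ to $\wec{n}_\sigma$, and write $\pi([n]\setminus V)=(U_1,\ldots,U_k)$, so that $\ell(\wec{n}_{\sigma i})=U_{\lambda(i)}$. For $t\in U_j\subseteq[n]\setminus V$ I will set $j(t):=j$. The key observation for property (2) is that whenever $t\in\ell(\wec{n}_{\sigma i})=U_{\lambda(i)}$ we have $j(t)=\lambda(i)$, and by the very definition of $\lambda$ the entry $R_{\lambda(i),i}$ is a constant symbol; hence $b_{i,t}=R_{\lambda(i),i}^{\m a}$ regardless of $a_t$. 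This value is independent of $t$, so $\wec{b}_i$ is constant on $\ell(\wec{n}_{\sigma i})$, witnessing (2).

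For $t\in V$ a row must still be chosen, and I will simply fix $j(t):=1$ for every such $t$ (any fixed row works). Property (3) then reduces to a short check: if $\wec{n}$ is a node on the path from $\wec{n}_\sigma$ to $\wec{n}_{\emptyset}$ and $\wec{a}$ is processed for $\wec{n}$ via a constant $c\in P$, then $\ell(\wec{n})\subseteq V$ and $a_t=c^{\m a}$ for every $t\in\ell(\wec{n})$. For these $t$, $b_{i,t}$ is either $a_t=c^{\m a}$ (if $R_{1,i}=x$) or the fixed value $R_{1,i}^{\m a}$ (if $R_{1,i}$ is a constant symbol); in either case it is the interpretation of a single constant symbol and does not depend on $t$. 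Hence $\wec{b}_i$ is processed for $\wec{n}$.

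I do not expect a real obstacle. Properties (2) and (3) concern the disjoint coordinate sets $[n]\setminus V$ and $V$, so the two row-choice rules cannot clash; on each set a single uniform row suffices, powered by the defining property of $\lambda$ on the former and by the coarseness of the definition of \emph{processed} (which asks only that the values on a label form a single constant tuple) on the latter. The one small item to verify carefully is that both possibilities $R_{j(t),i}=x$ and $R_{j(t),i}$ a constant symbol give the right value in each of the three properties, which is exactly what the row identities for $\F(R)\approx[x,\ldots,x]^{\mathsf T}$ supply.
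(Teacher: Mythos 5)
Your proposal is correct and is essentially the paper's own argument: the paper also solves the equation blockwise, using the first cube identity on the coordinates of $V$ and of $U_1$ and the $j$-th cube identity on $U_j$, which is exactly your row-assignment $j(t)$, and verifies (2) via the defining property of $\lambda$ and (3) via the case split on whether the first-row entry is $x$ or a constant. The only difference is presentational (coordinatewise with an explicit function $j$ versus block notation).
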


\cproof
Let $V$ be the union of labels on nodes
between $\wec{n}_{\sigma}$ and $\wec{n}_{\emptyset}$.
If $\pi([n]\setminus V) = (U_1,\ldots,U_k)$,
then $\{V,U_1,\ldots,U_k\}$ is a partition of $[n]$
(with $V$ possibly empty).
For simplicity of expression, reorder coordinates
so that $\wec{a}$ and $\wec{b}_i$ can be written
$[\wec{a}_V, \wec{a}_{U_1}, \ldots, \wec{a}_{U_k}]^{\mathsf T}$
and
$[\wec{b}_{i,V}, \wec{b}_{i,U_1}, \ldots, \wec{b}_{i,U_k}]^{\mathsf T}$,
with coordinates from $V$ or $U_j$ grouped together.
Given $\wec{a}$, we need to solve for $\wec{b}_{i,V}$
and $\wec{b}_{i,U_j}$ in 
\begin{equation}\label{ddddddd}
\small{
\F^{\m a}(\wec{b}_1,\ldots,\wec{b}_m)=
\F^{\m a}\left( 
\left[
\begin{array}{c}
\wec{b}_{1,V}\\
\wec{b}_{1,U_1}\\
\vdots\\
\wec{b}_{1,U_k}\\
\end{array}
\right],
\ldots,
\left[
\begin{array}{c}
\wec{b}_{m,V}\\
\wec{b}_{m,U_1}\\
\vdots\\
\wec{b}_{m,U_k}\\
\end{array}
\right]
\right) = 
\left[
\begin{array}{c}
\wec{a}_{V}\\
\wec{a}_{U_1}\\
\vdots\\
\wec{a}_{U_k}\\
\end{array}
\right]
=\wec{a}
}
\end{equation}
in order to satisfy Item (1) of the claim.
We shall do so using the first cube identity in the
$V$-coordinates and the $U_1$-coordinates, and the 
$i$-th cube identity in the $U_i$-coordinates.

Whether $W=V$ or $W=U_i$, to solve 
$\F^{\m a}(\wec{b}_{1,W},\ldots,\wec{b}_{m,W})=\wec{a}_{W}$
for the $\wec{b}_{i,W}$'s using a particular cube identity,
take $\wec{b}_{i,W}=\wec{a}_W$ if there is an $x$ in the $i$-th
place of $F$ in the cube identity, and take 
$\wec{b}_{i,W}=[c^{\m a},\ldots,c^{\m a}]^{\mathsf T}$
if there is a $c$ in the $i$-th place
of the cube identity. It is not hard to see that this works, and so
(1) holds.

The label on node $\wec{n}_{\sigma i}$ is $U_{\lambda(i)}$.
The element $\lambda(i)\in [k]$ is the number of the first
cube identity that has some constant symbol
$c\in P$ in the $i$-th place of $F$.
Hence $\wec{b}_{i,U_{\lambda(i)}} = [c^{\m a},\ldots,c^{\m a}]^{\mathsf T}$.
Thus $\wec{b}_i$ is processed for node $\wec{n}_{\sigma i}$,
establishing (2).

If, in the first cube identity, there is an $x$ in the $i$-th place
of $F$, then $\wec{b}_{i,V} = \wec{a}_V$. If there is a constant
symbol $c\in P$ in the $i$-th place of $F$, then 
$\wec{b}_{i,V} = [c^{\m a},\ldots,c^{\m a}]^{\mathsf T}$.
In the latter case, $\wec{b}$ is processed at all 
coordinates in $V$, hence at all nodes between 
$\wec{n}_{\sigma}$ and $\wec{n}_{\emptyset}$. In the former
case, $\wec{b}_i$ is processed at any
node between 
$\wec{n}_{\sigma}$ and $\wec{n}_{\emptyset}$ where $\wec{a}$
is processed, since $\wec{b}_{i,V} = \wec{a}_V$. 
In either case, (3) holds.
The claim is proved.
\cqed

\medskip

The claim shows that we can attach any tuple $\wec{a}\in A^n$
to the root node and then 
process it down the tree using the cube identities
until we have
attached to the leaves the 
fully processed tuples associated to $\wec{a}$. 
Here we indicate the processing of a tuple
$\wec{a} \in A^5$ using the example template
given earlier.

\bigskip
\bigskip

\begin{center}
\begin{picture}(300,180)
\setlength{\unitlength}{1.1mm}

\put(48,51){\tiny{$\emptyset$}}
\put(49,55){${\tiny{\wec{a} = \left[\begin{matrix} a_1\\a_2\\a_3\\a_4\\a_5\end{matrix}\right]}}$}

\put(22,34){\tiny{$\{1, 2, 3\}$}}
\put(14,43){${\tiny{\left[\begin{matrix} 1\\1\\1\\a_4\\a_5\end{matrix}\right]}}$}

\put(42,40){${\tiny{\left[\begin{matrix} a_1\\a_2\\a_3\\2\\2\end{matrix}\right]}}$}
\put(51,34){\tiny{$\{4,5\}$}}

\put(79,43){${\tiny{\left[\begin{matrix} 2\\2\\2\\3\\3\end{matrix}\right]}}$}
\put(82,34){\tiny{$\{1, 2, 3\}$}}

\put(-14,10){${\tiny{\left[\begin{matrix} 1\\1\\1\\1\\a_5\end{matrix}\right]}}$}
\put(-12,22){\tiny{$\{4\}$}}

\put(1,10){${\tiny{\left[\begin{matrix} 1\\1\\1\\a_4\\2\end{matrix}\right]}}$}
\put(2,22){\tiny{$\{5\}$}}

\put(16,10){${\tiny{\left[\begin{matrix} 2\\2\\2\\2\\3\end{matrix}\right]}}$}
\put(16,22){\tiny{$\{4\}$}}

\put(27,18){${\tiny{\left[\begin{matrix} 1\\1\\a_3\\1\\1\end{matrix}\right]}}$}
\put(33.5,17){\tiny{$\{1, 2\}$}}

\put(42,18){${\tiny{\left[\begin{matrix} a_1\\a_2\\2\\2\\2\end{matrix}\right]}}$}
\put(52,18){\tiny{$\{3\}$}}

\put(65,18){${\tiny{\left[\begin{matrix} 2\\2\\3\\2\\2\end{matrix}\right]}}$}
\put(60,17){\tiny{$\{1, 2\}$}}

\put(77,10){${\tiny{\left[\begin{matrix} 1\\1\\1\\1\\3\end{matrix}\right]}}$}
\put(80,22){\tiny{$\{4\}$}}

\put(91,10){${\tiny{\left[\begin{matrix} 2\\2\\2\\3\\2\end{matrix}\right]}}$}
\put(93,22){\tiny{$\{5\}$}}

\put(106,10){${\tiny{\left[\begin{matrix} 2\\2\\2\\2\\3\end{matrix}\right]}}$}
\put(108,22){\tiny{$\{4\}$}}

\put(30,-5){${\tiny{\left[\begin{matrix} 1\\a_2\\1\\1\\1\end{matrix}\right]}}$}
\put(32,7){\tiny{$\{1\}$}}

\put(46,-5){${\tiny{\left[\begin{matrix} a_1\\2\\2\\2\\2\end{matrix}\right]}}$}
\put(45,7){\tiny{$\{2\}$}}

\put(62,-5){${\tiny{\left[\begin{matrix} 2\\3\\2\\2\\2\end{matrix}\right]}}$}
\put(65,7){\tiny{$\{1\}$}}

\put(50,50){\line(-2,-1){30}}
\put(50,50){\line(0,-1){15}}
\put(50,50){\line(2,-1){30}}

\put(20,35){\line(-2,-1){30}}
\put(20,35){\line(-1,-1){15}}
\put(20,35){\line(0,-1){15}}

\put(50,35){\line(-1,-1){15}}
\put(50,35){\line(0,-3){15}}
\put(50,35){\line(1,-1){15}}

\put(80,35){\line(2,-1){30}}
\put(80,35){\line(1,-1){15}}
\put(80,35){\line(0,-1){15}}

\put(50,20){\line(-1,-1){15}}
\put(50,20){\line(0,-3){15}}
\put(50,20){\line(1,-1){15}}

\put(50,50){\circle*{1}}

\put(20,35){\circle*{1}}
\put(50,35){\circle*{1}}
\put(80,35){\circle*{1}}

\put(-10,20){\circle*{1}}
\put(5,20){\circle*{1}}
\put(20,20){\circle*{1}}

\put(35,20){\circle*{1}}
\put(50,20){\circle*{1}}
\put(65,20){\circle*{1}}

\put(80,20){\circle*{1}}
\put(95,20){\circle*{1}}
\put(110,20){\circle*{1}}

\put(35,5){\circle*{1}}
\put(50,5){\circle*{1}}
\put(65,5){\circle*{1}}

\end{picture}
\end{center}

\bigskip
\bigskip
\bigskip
\bigskip
\bigskip

Each leaf of the template determines a \emph{type}
of fully processed tuples. Two fully processed
tuples $\wec{u}$ and $\wec{v}$
of the same type have the same processed
coordinates, and the same constant entries in the
processed coordinates. They differ only in the unprocessed
coordinates. For any given type there is a partition
of the $n$ coordinates into at most $p+k-1$ cells
where each unprocessed coordinate is a singleton cell
(there are at most $k-1$ of these cells) and 
all processed coordinates with a given constant
entry form a cell (there are at most $p$
of these cells). The collection of all tuples
of this type lie in the subalgebra of all tuples
constant on these cells, and this subalgebra is isomorphic to 
$\m a^{j}$ for some $j\leq p+k-1$.
The assumption of the theorem is that 
$\m a^{p+k-1}$ is finitely generated, say by $g$ elements.
This paragraph explains why $\m a^n$ has a subalgebra
generated by $\leq g$ elements 
(and isomorphic to $\m a^{j}$ for some $j\leq p+k-1$)
which contains all fully processed tuples of a given type.

For example, the fully processed tuple 
${\tiny{\left[\begin{matrix} 1\\1\\1\\a_4\\2\end{matrix}\right]}}$
from the preceding figure lies in the subalgebra of all
tuples of the form 
${\tiny{\left[\begin{matrix} x\\x\\x\\y\\z\end{matrix}\right]}}$,
which is isomorphic to $\m a^3$. Here $3\leq p+k-1=3+2-1=4$.

Now let's count the number of types.
Since the template is an $m$-ary tree, and the types
are determined by the leaves, the number of types is at 
most $m^r$ where $r$ is an upper bound on
the length of the longest branch in the processing template.
We must estimate $r$.

Let $V_0 = \ell(\wec{n}_{\emptyset}) = \emptyset$.
This represents the set of coordinate positions that have been
processed before the processing begins, i.e., no 
coordinate positions. As we progress down a 
branch in the template, $\wec{n}_{\emptyset}, 
\wec{n}_i, \wec{n}_{ij},\ldots, \wec{n}_{\sigma}$,
we may construct sets $V_{\sigma i} = V_{\sigma}\cup \ell(\wec{n}_{\sigma i})$,
where $V_{\sigma}$ represents the set of coordinate positions that have been
processed along this branch from $\wec{n}_{\emptyset}$ to $\wec{n}_{\sigma}$.
The unprocessed coordinate positions, $[n]\setminus V_{\sigma}$
are then divided evenly, 
$\pi([n]\setminus V_{\sigma}) = (U_1,\ldots,U_k)$, to appear
as labels of the children of $\wec{n}_{\sigma}$.
Thus, $|V_{\emptyset}|=0$ and
\begin{equation}\label{WWW}
|V_{\sigma i}| = |V_{\sigma}\cup \ell(\wec{n}_{\sigma i})|=
|V_{\sigma}|+|\ell(\wec{n}_{\sigma i})|.
\end{equation}
The useful parameter is the number 
$u_{\sigma}:=
|[n]\setminus V_{\sigma}|=n- |V_{\sigma}|$ 
of nodes that remain unprocessed 
after reaching $\wec{n}_{\sigma}$.
This parameter satisfies
$u_{\emptyset} = |[n]\setminus V_{\emptyset}|=n$ and, from (\ref{WWW}),
\begin{equation}\label{XXX}
u_{\sigma i} = (n - |V_{\sigma i}|) = 
(n-|V_{\sigma}|)-|\ell(\wec{n}_{\sigma i})| = 
u_{\sigma}-|\ell(\wec{n}_{\sigma i})|.
\end{equation}
Since 
$\pi([n]\setminus V_{\sigma}) = (U_1,\ldots,U_k)$ is an even division
of $[n]\setminus V_{\sigma}$ into $k$ sets, and  
$\ell(\wec{n}_{\sigma i})=U_{\lambda(i)}$, we get 
\begin{equation}\label{YYY}
|\ell(\wec{n}_{\sigma i})|=|U_{\lambda(i)}|\geq 
\lfloor(n-|V_{\sigma}|)/k\rfloor = 
\lfloor u_{\sigma}/k\rfloor.
\end{equation}
Combining (\ref{XXX}) and (\ref{YYY}) we have
\[
u_{\sigma i} 
\leq 
u_{\sigma} 
- \lfloor u_{\sigma}/k\rfloor = 
\left\lceil\left(\frac{k-1}{k}\right)u_{\sigma}\right\rceil.
\]
In order to avoid considering truncation error, we
use the following fact, whose proof we leave to the reader.
\begin{clm}
If $u\geq k\geq 1$, then 
$\left\lceil \left(\frac{k-1}{k}\right)u\right\rceil\leq
\left(\frac{2k-1}{2k}\right)u
$.\cqed
\end{clm}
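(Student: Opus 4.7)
The plan is to reduce the inequality to a short piece of Euclidean arithmetic. Write $u = qk + r$ with $0 \le r \le k-1$; since $u \ge k$ this forces $q \ge 1$. Using the identity $\lceil (k-1)u/k \rceil = u - \lfloor u/k \rfloor$ (which follows from $(k-1)u/k = u - u/k$ together with $\lceil -x \rceil = -\lfloor x \rfloor$), the left-hand side becomes simply $u - q$.

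The inequality to prove is therefore $u - q \le (2k-1)u/(2k) = u - u/(2k)$, which rearranges to $u \le 2kq$, i.e.\ $qk + r \le 2kq$, i.e.\ $r \le qk$. This last inequality is immediate from $r \le k-1 < k \le qk$, using $q \ge 1$.

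I do not foresee any real obstacle; the claim is a routine numerical estimate. The only mild subtlety is handling the ceiling, and the identity $\lceil (k-1)u/k \rceil = u - \lfloor u/k \rfloor$ bypasses a case analysis on whether $k \mid u$. (If one prefers to avoid the identity, splitting into the cases $r=0$ and $r>0$ and computing each side directly gives the same bound with the same final step $qk \ge r$.)
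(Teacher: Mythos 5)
Your proof is correct, and in fact the paper offers no proof of this claim (it is explicitly ``left to the reader''), so there is nothing to compare against; your reduction via $\lceil (k-1)u/k\rceil = u - \lfloor u/k\rfloor$ followed by writing $u = qk + r$ settles it completely. One small point worth making explicit: the argument (and the claim itself) requires $u$ to be an integer --- e.g.\ $k=2$, $u=2.1$ gives $\lceil 1.05\rceil = 2 > 1.575$ --- which is fine here since $u = u_\sigma$ counts unprocessed coordinates.
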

Hence
\[
u_{\sigma i} 
\leq 
\left(\frac{2k-1}{2k}\right)u_{\sigma}
\]
for each $\sigma$, and therefore 
\[
u_{\sigma} 
\leq 
\left(\frac{2k-1}{2k}\right)^{|\sigma|}u_{\emptyset} =
\left(\frac{2k-1}{2k}\right)^{|\sigma|}n
\]
for each $\sigma$.
If, for some $r$, it happens that 
$\left(\frac{2k-1}{2k}\right)^{r}n < k$,
then there are fewer than $k$ unprocessed nodes
at address $\sigma$ for any $\sigma$
satisfying $|\sigma|\geq r$. Such an 
$r$ is an upper bound on the length of 
paths through the template.

Solving $\left(\frac{2k-1}{2k}\right)^{r}n < k$
for $r$ we obtain that any $r > \log_w(n/k)$, 
$w = \frac{2k}{2k-1}$, is an upper bound
on the length of paths in the template;
hence $r=\log_w(n/k)+1$ is such a bound.
Hence the number of types of fully processed tuples
is no more than
\[
m^r=
m^{\log_w(n/k)+1} =
m^{\log_w(n/k)}m
= (n/k)^{\log_w(m)}m
\in O(n^{\log_w(m)}).
\]

Recall that for each type, the set of fully processed
tuples lies in a $g$-generated subalgebra of $\m a^n$.
Collecting these generators yields a set of size
$O(n^{\log_w(m)})$ which generates all fully processed
tuples, hence generates $\m a^n$.
\end{proof}

This theorem deals only with the case $p\geq 1$.
We describe next how to refine the estimate
in the case $p=1$ and how to derive the result
for $p=0$ from the $p=1$ case.

\begin{cor}\label{pointed_polynomial_cor}
If $\m a^k$ is a finitely generated algebra with a $0$-pointed
or $1$-pointed $k$-cube term, then $d_{\m a}(n)\in O(n^{k-1})$. 
\end{cor}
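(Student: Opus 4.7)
The plan is to reduce to the $p=1$ case and then refine the counting step in the proof of Theorem~\ref{pointed_polynomial}. For $p=1$, the observation is that with exactly one constant symbol $c$ available in the cube identities, every fully processed tuple arising from the processing template has a very restricted shape: its unprocessed coordinates (at most $k-1$ of them) may take any values in $A$, while each processed coordinate is forced to equal $c^{\m a}$. Consequently, the ``type'' of a fully processed tuple is determined by the set $U\subseteq [n]$ of unprocessed coordinates, rather than by the particular leaf of the template. The number of such subsets is
\[
\sum_{i=0}^{k-1}\binom{n}{i}\in O(n^{k-1}),
\]
which is a sharper bound than the $O(n^{\log_w m})$ bound obtained by counting leaves of the template.

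For each type $U$, the set of fully processed tuples of that type lies in the subalgebra $S_U\leq \m a^n$ consisting of tuples constant on the cells of the partition $\{\{i\}:i\in U\}\cup\{[n]\setminus U\}$, which has at most $k$ cells. Thus $S_U$ is isomorphic to $\m a^j$ for some $j\leq k$, and as a homomorphic image of $\m a^k$ it is $g$-generated, where $g$ is a generating number for $\m a^k$. Collecting one such set of $g$ generators for each type yields a set of size $O(n^{k-1})$ which generates every fully processed tuple, and hence (by the processing argument from Theorem~\ref{pointed_polynomial}, with the original cube identities factoring arbitrary tuples into fully processed ones) generates $\m a^n$. This gives $d_{\m a}(n)\in O(n^{k-1})$ in the case $p=1$.

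For $p=0$, the reduction is as follows. Let $\m b$ be the expansion of $\m a$ obtained by adjoining every element of $A$ as a constant symbol. If $F(x_1,\ldots,x_m)$ is a $0$-pointed $k$-cube term for $\m a$ witnessed by a matrix $M$ of variables, then fixing any element $c\in A$ and replacing every non-$x$ variable in $M$ by $c$ produces a matrix $M'$ in which each column contains the (now constant) symbol $c$, and the resulting row identities hold in $\m b$. Thus $F$ is a $1$-pointed $k$-cube term for $\m b$. Moreover $\m b^k$ is finitely generated, since any generating set for $\m a^k$ still generates $\m b^k$ (the new diagonal constants lie in any subuniverse of $\m b^k$). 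Applying the $p=1$ case to $\m b$ yields $d_{\m b}(n)\in O(n^{k-1})$, and Theorem~\ref{basic_estimates}(4) gives
\[
d_{\m a}(n)\leq d_{\m b}(n)+d_{\m a}(1)\in O(n^{k-1}),
\]
since $d_{\m a}(1)$ is a constant (finite because $\m a$ is a quotient of the finitely generated algebra $\m a^k$).

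The main obstacle is the refined counting in the $p=1$ case: the original proof counts template leaves, giving a bound exponential in the depth of the template, whereas here one must argue that many template leaves collapse to the same type so that the number of distinct types is polynomial of degree $k-1$. The key point enabling this collapse is that with only one constant available, the processed coordinates of a fully processed tuple are forced to a single value, so no information beyond the set $U$ of unprocessed positions is needed to identify the containing subalgebra.
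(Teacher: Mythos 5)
Your proposal is correct and follows essentially the same route as the paper: for $p=1$ you observe that fully processed tuples equal $c^{\m a}$ off a set $U$ of at most $k-1$ unprocessed positions, place each such tuple in a subalgebra isomorphic to a power $\m a^j$ with $j\leq k$ (hence $g$-generated), and count $O(n^{k-1})$ choices of $U$; for $p=0$ you adjoin a constant, substitute it for the non-$x$ variables to get a $1$-pointed cube term, and finish with Theorem~\ref{basic_estimates}(4). The only (harmless) cosmetic differences are that the paper uses only the $\binom{n}{k-1}$ subalgebras $\m a[U]$ with $|U|=k-1$ and adjoins a single constant rather than all of $A$.
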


\begin{proof}
Suppose that $\m a$ has a $1$-pointed $k$-cube term,
and that $c$ is the one constant that appears
among the cube identities. Then a fully processed tuple
$\wec{a}$ has $c$ in every processed coordinate position,
and has at most $k-1$ unprocessed coordinate positions.
Hence the set of tuples with a 
$c$ in all but at most $k-1$ positions 
contains all the fully processed tuples, and therefore
is a generating set for $\m a^n$. 

Suppose that $\m a^k$ is $g$-generated.
If $U\subseteq [n]$ has size $k-1$, then the subalgebra 
$\m a[U]$ of tuples in $\m a^n$ that are constant off of $U$ is isomorphic
to $\m a^k$, and so is also $g$-generated. This subalgebra
contains all tuples that have entry $c$ off of $U$.
If we collect the $g$ generators for $\m a[U]$ 
for each $k-1$ element subset $U\subseteq [n]$ we obtain a 
set of size
$
\binom{n}{k-1}g
$
which
generates $\m a^n$. Therefore $d_{\m a}(n)\leq \binom{n}{k-1}g\in O(n)$.

Now suppose that $F(x_1,\ldots,x_m)$ is a $0$-pointed $k$-cube term
of $\m a$ and that the cube identities are
\begin{equation}\label{F}
\F(M)\approx \left(\begin{array}{c} x\\ \vdots\\x\end{array}\right).
\end{equation}
Expand $\m a$ to an algebra $\m b$ by adjoining
a single constant, say $c$. 
Replace all variables
other than $x$ in (\ref{F}) with $c$ to obtain 
identities witnessing that $F(x_1,\ldots,x_m)$
is a $1$-pointed $k$-cube term for $\m b$.
Hence $d_{\m b}(n)\in O(n^{k-1})$ by the earlier
part of the argument. Now 
$d_{\m a}(n)\in O(n^{k-1})$ by 
Theorem~\ref{basic_estimates}~(4).
\end{proof}

In 
\cite{paper2}
we improve this result by showing
that finite algebras with a 
$0$-pointed $k$-cube term have logarithmic
or linear growth.

Let's combine the results of this subsection 
with the results of the previous subsection.

\begin{thm}\label{summary}
The following are equivalent for a set 
$\Sigma$ of basic identities in which only finitely
many constant symbols occur. 
\begin{enumerate}
\item[(1)] $\Sigma$ is restrictive.
[That is, 
the class of $d$-functions of algebras
is not equal to the class of $d$-functions of 
algebras that realize $\Sigma$.]
\item[(2)] $\Sigma$ is restrictive for finite algebras.
[The class of $d$-functions of finite algebras
is not equal to the class of $d$-functions of finite 
algebras that realize $\Sigma$.]
\item[(3)] The variety axiomatized by $\Sigma$
has a pointed cube term.
\item[(4)] The variety axiomatized by $\Sigma$
has a pointed cube term
of the form $F(x_1,\ldots,x_m)$, 
where $m\geq 2$, $F$ is a function
symbol occurring in $\Sigma$, and 
the variables $x_1,\ldots,x_m$ are distinct.
\item[(5)] If $\m a$ is an algebra realizing $\Sigma$
and $d_{\m a}(n)$ is finite for all $n$,
then $d_{\m a}(n)$ is bounded above by a polynomial.
\item[(6)] There is no (finite) algebra $\m a$ realizing
$\Sigma$ such that $d_{\m a}(n) = 2^n$ for all $n$.
\end{enumerate}
\end{thm}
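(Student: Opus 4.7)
The equivalence (3)$\Leftrightarrow$(4) is exactly the content of Lemma~\ref{nonrestrictive} (specifically the equivalence of items (1) and (4) there). The implications (1)$\Rightarrow$(3) and (2)$\Rightarrow$(3) are the contrapositives of Theorem~\ref{nonrestrictive_thm}, which established both nonrestrictiveness properties under the assumption that $\Sigma$ fails to entail the existence of a pointed cube term. It therefore suffices to close the chain by proving (3)$\Rightarrow$(5)$\Rightarrow$(6)$\Rightarrow$(1) together with (6)$\Rightarrow$(2).

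For (3)$\Rightarrow$(5), suppose the variety axiomatized by $\Sigma$ has a $p$-pointed $k$-cube term and that $\m a$ realizes $\Sigma$ with $d_{\m a}(n) < \omega$ for every $n$. Then $\m a$ itself has that pointed cube term. If $p \geq 1$, the hypothesis gives that $\m a^{p+k-1}$ is finitely generated, and Theorem~\ref{pointed_polynomial} supplies a polynomial upper bound on $d_{\m a}$. If $p = 0$, then $\m a^{k}$ is finitely generated, and Corollary~\ref{pointed_polynomial_cor} gives $d_{\m a}(n) \in O(n^{k-1})$. Either way, (5) holds.

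For (5)$\Rightarrow$(6): if some algebra realizing $\Sigma$ satisfied $d_{\m a}(n) = 2^{n}$ for all $n$, its $d$-function would be finite-valued everywhere and hence polynomially bounded by (5), contradicting the fact that $2^{n}$ dominates every polynomial. For (6)$\Rightarrow$(1) and (6)$\Rightarrow$(2), the plan is to exhibit in each setting an algebra whose $d$-function is $2^{n}$; were $\Sigma$ nonrestrictive (resp.\ nonrestrictive for finite algebras), this would produce a (finite) model of $\Sigma$ with the same $d$-function, contradicting (6). In the general case, Theorem~\ref{big} applied to $D(n) = 2^{n}$ (increasing, $D(0)=1$, $D(2)=4>0$) yields a countably infinite algebra with this $d$-function. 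In the finite case, a two-element set equipped with only its projection operations has $d_{\m a}(n) = 2^{n}$, as observed in the discussion preceding Theorem~\ref{first_bounds}.

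The main substantive step in the argument is (3)$\Rightarrow$(5), but the heavy lifting there has already been done in Theorem~\ref{pointed_polynomial} via the processing-template construction; the only point requiring a little care is that Theorem~\ref{pointed_polynomial} was stated only for $p \geq 1$, so the pure unpointed case must be routed through Corollary~\ref{pointed_polynomial_cor}. Every other link in the cycle is soft, being either a citation of an earlier result of the section or the exhibition of a standard witness algebra of exponential growth.
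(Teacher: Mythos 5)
Your proposal is correct and follows essentially the same route as the paper: (1),(2)$\Rightarrow$(3) via Theorem~\ref{nonrestrictive_thm}, (3)$\Leftrightarrow$(4) via Lemma~\ref{nonrestrictive}, (3)$\Rightarrow$(5) via Theorem~\ref{pointed_polynomial} together with Corollary~\ref{pointed_polynomial_cor} for the $p=0$ case, and (5)$\Rightarrow$(6)$\Rightarrow$(1),(2) by noting $2^n$ is not polynomially bounded and using the two-element set with only projections as the exponential-growth witness. The only cosmetic difference is that the paper uses that same two-element witness for both (6)$\Rightarrow$(1) and (6)$\Rightarrow$(2), whereas you invoke Theorem~\ref{big} for the infinite case, which is unnecessary but harmless.
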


\begin{proof}

[$(1)\Rightarrow(3)$ and $(2)\Rightarrow(3)$] 
Theorem~\ref{nonrestrictive_thm}.

[$(3)\Leftrightarrow(4)$] Lemma~\ref{nonrestrictive}.

[$(3)\Rightarrow(5)$] Theorem~\ref{pointed_polynomial}
and Corollary~\ref{pointed_polynomial_cor}.

[$(5)\Rightarrow(6)$] $d_{\m a}(n) = 2^n$ is not bounded
above by a polynomial.

[$(6)\Rightarrow(1)$ and $(6)\Rightarrow(2)$] 
If (1) or (2) fails then $\Sigma$ is
nonrestrictive (for finite algebras).
Thus, there exists a (finite) algebra 
$\m A$ realizing $\Sigma$ with the same growth
rate $d_{\m A}(n)=2^n$ as the 
$2$-element set equipped with no operations. Hence (6) fails.
\end{proof}

\subsection{Finite algebras with polynomial growth}\label{poly_growth}

In this subsection we prove that the
bound on growth
rates for finite algebras with $1$-pointed
$k$-cube terms, established in
Corollary~\ref{pointed_polynomial_cor},
is sharp. 

\begin{thm}\label{example}
For each $k\geq 2$ there is a finite algebra 
with a $1$-pointed $k$-cube term whose growth rate
satisfies $d_{\m a}(n)\in \Theta(n^{k-1})$.
\end{thm}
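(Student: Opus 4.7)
My plan is to construct, for each $k\geq 2$, a $3$-element algebra $\m a_k$ on the universe $\{0,1,2\}$ with $0$ as a constant symbol and one $k$-ary basic operation $F$, and then verify both the upper and lower bounds separately. I would define $F$ so that $F(a_1,\ldots,a_k)=x$ whenever exactly one $a_i$ equals $0$ and all the other $a_j$ equal a common element $x\in\{1,2\}$, and $F(x,\ldots,x)=x$ for $x\in\{1,2\}$; on every other input I set $F=0$. By construction the row identities $F(0,x,\ldots,x)\approx x$, $F(x,0,x,\ldots,x)\approx x$, $\ldots$, $F(x,\ldots,x,0)\approx x$ hold, so the matrix with $0$'s on the diagonal and $x$'s off it witnesses that $F$ is a $1$-pointed $k$-cube term. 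Corollary~\ref{pointed_polynomial_cor} then delivers the upper bound $d_{\m a_k}(n)\in O(n^{k-1})$ for free, as soon as we know $\m a_k^{k}$ is finite, which it is.

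The real work is the matching lower bound $d_{\m a_k}(n)\in\Omega(n^{k-1})$. My plan is to single out a family $T_n\subseteq A^n$ of ``critical'' tuples, namely those whose support is a $(k-1)$-element subset $I\subseteq[n]$ carrying a coloring $c\colon I\to\{1,2\}$ that uses both values (so $c$ is nonconstant). There are $\binom{n}{k-1}(2^{k-1}-2)\in\Theta(n^{k-1})$ such tuples, and I will show that every tuple in $T_n$ must appear in \emph{any} generating set of $\m a_k^n$. This is enough to finish the theorem, because combined with the $O(n^{k-1})$ upper bound already in hand it gives $d_{\m a_k}(n)\in\Theta(n^{k-1})$.

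The key technical lemma, and the main obstacle, is the irreducibility claim: if $\wec{t}\in T_n$ and $\wec{t}=F^{\m a_k^n}(\wec{b}_1,\ldots,\wec{b}_k)$, then some $\wec{b}_j$ is already $\wec{t}$. The plan is a pigeonhole argument on the $(k-1)$-element support $I$ of $\wec{t}$. At each coordinate $i\in I$ with $t_i=x\in\{1,2\}$, the definition of $F$ forces at least $k-1$ of the columns $(\wec{b}_1)_i,\ldots,(\wec{b}_k)_i$ to equal $x$ and the remaining one to equal $0$ or $x$; equivalently, at most one of the indices $j\in[k]$ can carry a value different from $x$ at coordinate $i$. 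Since $|I|=k-1<k$, counting the ``bad'' indices over all $i\in I$ yields some $j^*\in[k]$ whose tuple $\wec{b}_{j^*}$ agrees with $\wec{t}$ on every coordinate of $I$, hence carries the same mixed coloring $c$. I then need a second, parallel, argument at coordinates outside $I$: since $t_j=0$ there, and since $\wec{b}_{j^*}$ already witnesses a non-constant coloring on $I$, the ``otherwise $F=0$'' clause must be used, and checking the cases shows $\wec{b}_{j^*}$ must also be $0$ off $I$. Thus $\wec{b}_{j^*}=\wec{t}$, so $\wec{t}$ is irreducible.

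The role of the third element $2$ is essential and is what I expect to be the subtle point: over the two-element universe $\{0,1\}$ with the analogous ``OR-like'' operation, the growth is only linear because a tuple $e_I\in\{0,1\}^n$ with support $I$ of size $k-1$ can be written as an $F$-combination of the $k-1$ singleton-support tuples $e_i$ ($i\in I$) together with $\hat 0$. Forcing a \emph{non-monochromatic} coloring of $I$ by $\{1,2\}$ is precisely what defeats that factorization: the third value $2$ prevents combining smaller-support tuples to recover the mixed pattern at all $k-1$ positions simultaneously. I expect this mixing phenomenon to be exactly where the counting $\binom{n}{k-1}\in\Theta(n^{k-1})$ enters the lower bound, confirming that Corollary~\ref{pointed_polynomial_cor} is sharp.
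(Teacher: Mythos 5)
Your algebra and the upper bound are fine: the cube identities do hold (including at $x=0$, where every row evaluates to $0$), so $F$ is a $1$-pointed $k$-cube term and Corollary~\ref{pointed_polynomial_cor} gives $d_{\m a_k}(n)\in O(n^{k-1})$. The lower bound, however, collapses because your key irreducibility lemma is false. Take $k=3$ and $\wec{t}=(1,2,0,0,\ldots,0)\in T_n$. Then
\[
F\bigl((1,2,1,\ldots,1),\,(1,2,2,\ldots,2),\,(1,2,1,\ldots,1)\bigr)=(1,2,0,\ldots,0)=\wec{t},
\]
since the column at each coordinate $i\geq 3$ is $(1,2,1)$, which contains no $0$ and is not constant, hence falls under your ``otherwise $F=0$'' clause. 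None of the three arguments equals $\wec{t}$, so $A^n\setminus\{\wec{t}\}$ is closed under $F$'s failure to be closed — that is, $\wec{t}$ is generated without itself and is not an essential generator. The precise point of failure is your sentence ``checking the cases shows $\wec{b}_{j^*}$ must also be $0$ off $I$'': at a coordinate where $t_i=0$, the equation $F(b_{1,i},\ldots,b_{k,i})=0$ is satisfied by many columns with no zero entry at all, so nothing forces $b_{j^*,i}=0$ there. Worse, the same trick shows that in your algebra \emph{no} tuple is an essential generator once $n\geq k$ (tuples containing a $0$ factor as above through full-support tuples; tuples in $\{1,2\}^n$ factor by zeroing out $k$ disjoint nonempty blocks of coordinates), so no lower bound whatsoever can be extracted by counting essential generators. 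A separate, smaller defect: for $k=2$ your set $T_n$ is empty, since $2^{k-1}-2=0$.

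The paper's proof shows how to repair this. It keeps $F$ a genuinely \emph{partial} operation, defined only on the near-unanimity tuples whose lone dissenter is the default element, and from that partiality deduces the support identity $\supp(F(\wec{g}_1,\ldots,\wec{g}_k))=\bigcup_i\supp(\wec{g}_i)$ — precisely the statement that producing the default value in a coordinate forces every input to carry the default value there. That identity is what makes the tuples of support at most $k-1$ the unique minimal generating set. Totality is recovered afterwards by the one-point completion (Theorem~\ref{partial}, Corollary~\ref{partial_cor}), which routes undefined inputs to a \emph{new} absorbing element and adds a meet operation, and provably preserves minimal generating sets; the pointing constant is adjoined last. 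Your shortcut of routing the undefined inputs to the existing element $0$ — the very element serving as dissenter in the cube identities — is exactly what destroys the support argument. Note also that the two-color ``mixing'' is not the mechanism: the paper's construction already achieves $\Theta(n^{k-1})$ with a single nonzero value ($q=1$), where all $\sum_{i\le k-1}\binom{n}{i}$ essential generators are monochromatic.
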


\begin{proof}
We shall first construct a partial algebra 
with the desired growth rate, 
then modify it slightly to obtain a total algebra
satisfying the hypotheses of the theorem.

The universe of the partial
algebra will be $A = \{a_1,\ldots,a_q,1\}$.
We equip this set with a partial
$k$-ary operation $F$ which satisfies
\[
F^{\m a}(1,x,\ldots,x,x) =
F^{\m a}(x,1,\ldots,x,x) = 
\cdots =
F^{\m a}(x,x,\ldots,x,1) = x
\]
for each $x\in A$, and which is undefined otherwise.
Thus, $F^{\m a}$ is a partial near unanimity operation
that is defined only on the nearly unanimous tuples
where the lone dissenter is $1$ and on the tuple
whose entries are unanimously $1$.
Set $\m a = \langle A; F\rangle$. 

We shall prove the exact formula
\begin{equation}\label{d_formula}
d_{\m a}(n)=
\binom{n}{0} + q\binom{n}{1} + q^2\binom{n}{2} + \cdots + q^{k-1}\binom{n}{k-1}
\end{equation}
for this partial algebra,
which is a polynomial in $n$ of degree $k-1$, since 
$k = {\rm arity}(F)$ and 
$q=|A|-1$ are fixed. 
This will show that $\m a$ is a $(q+1)$-element
partial algebra with $d_{\m a}(n)\in\Theta(n^{k-1})$.

Choose and fix $n$. Define
the \emph{support} of a tuple $\wec{a}\in A^n$ to be the 
subset $\supp(\wec{a})\subseteq [n]$ 
consisting of indices $s$ where $a_s\neq 1$.
The proof involves showing that the set of all tuples
whose support has size at most $k-1$
is the unique minimal generating set for $\m a^n$.
To set up language for the argument, call a tuple 
$\wec{b}\in A^n$ an \emph{essential generator}
if it is contained in any generating set for $\m a^n$.

\begin{clm}\label{support}
If $S\subseteq [n]$ and $G\subseteq A^n$, then let $G_S$ denote the 
set of tuples
in $G$ that have support contained in $S$.
If $\wec{a}\in \lb G\rb$ has support in $S$, 
then $\wec{a}\in \lb G_S\rb$.
\end{clm}

\cproof
In $\m a$, we have 
\[
F^{\m a}(x_1,\ldots,x_k) = 1\Longleftrightarrow x_1 = x_2 = \cdots = x_k = 1.
\]
Hence, in $\m a^n$, 
if $F^{\m a^n}(\wec{g}_1,\ldots,\wec{g}_k)$
is defined and equal to $\wec{b}$,
then $i\notin \supp(\wec{b})$ if and only if $i\notin \supp(\wec{g}_i)$
for any $\wec{g}_i$. Equivalently, 
\begin{equation}\label{generate}
\supp(F^{\m a^n}(\wec{g}_1,\ldots,\wec{g}_k)) = \bigcup_{i=1}^k \supp(\wec{g}_i)
\end{equation}
whenever $F^{\m a^n}(\wec{g}_1,\ldots,\wec{g}_k)$ is defined.
Now let $G(0) = G$, $G_S(0)=G_S$,
$G(j+1) = G(j)\cup F^{\m a^n}(G(j),\ldots,G(j))$, and
$G_S(j+1) = G_S(j)\cup F^{\m a^n}(G_S(j),\ldots,G_S(j))$.
By induction on $j$, using (\ref{generate}),
it can be shown that any tuple in $G(j)$ that has support
in $S$ lies in $G_S(j)$. Since $\lb G\rb = \bigcup_j G(j)$
and $\lb G_S\rb = \bigcup_j G_S(j)$, 
any tuple in $\lb G\rb$ with support in $S$ lies in $\lb G_S\rb$.
\cqed

\medskip

\begin{clm}\label{empty}
The tuple $\hat{1}=[1,1,\ldots,1]^{\mathsf T}$ of empty support
is an essential generator.
\end{clm}

\cproof
This follows immediately from Claim~\ref{support}.
\cqed

\medskip

\begin{clm}\label{supp}
Any tuple whose support has size at most $k-1$
is an essential generator of $\m a^n$.
\end{clm}

\cproof
Let $\wec{b}\in A^n$ be a tuple of support $S$
where $1\leq |S|\leq k-1$. Without loss of generality,
$S = [\ell]  =\{1,\ldots,\ell\}$ for some $1\leq \ell\leq k-1$. 
In order to obtain a contradiction to the claim,
assume that $\wec{b}$ is not an essential generator.
Then $\wec{b}$ can be generated by elements different from
$\wec{b}$, so
the equation $F^{\m a^n}(\wec{x}_1,\ldots,\wec{x}_k) = \wec{b}$
can be solved for the $\wec{x}_i$ in such a way
that $\wec{b}\notin \{\wec{x}_1,\ldots,\wec{x}_k\}$.
Moreover, by (\ref{generate}), the $\wec{x}_i$'s must be taken
from the tuples whose support is contained in $S$.
The equation to be solved is therefore:
\begin{equation}\label{dividers}
\small{
F^{\m a^n}(\wec{x}_1,\ldots,\wec{x}_k)=
F^{\m a^n}\left( 
\left[
\begin{array}{c}
x_{1,1}\\
\vdots\\
\underline{\;\;x_{\ell,1}\;\;}\\
1\\
\vdots\\
1\\
\end{array}
\right],
\ldots,
\left[
\begin{array}{c}
x_{1,k}\\
\vdots\\
\underline{\;\;x_{\ell,k}\;\;}\\
1\\
\vdots\\
1\\
\end{array}
\right]
\right) = 
\left[
\begin{array}{c}
b_1\\
\vdots\\
\underline{\;\;\;b_{\ell}\;\;\;}\\
1\\
\vdots\\
1\\
\end{array}
\right]=\wec{b}.
}
\end{equation}
We have introduced horizontal segments
as dividers separating the coordinates
in $S = [\ell]$ from the
remaining coordinates in order
to make the argument clearer.
Since $F^{\m a^n}(\wec{x}_1,\ldots,\wec{x}_k)$ is defined,
every row above the dividers
is a nearly unanimous row with 
exactly one $1$. Hence there are exactly
$\ell$ $1$'s above the dividers.
This means that there are at most $\ell$ columns which
contain a $1$ above the dividers. Since there are
$k$ such columns, and $k>\ell$, there is a column
$\wec{x}_j$ that contains no $1$ above the dividers.
Since the $i$-th row above the dividers is nearly
unanimous with majority value $b_i$, the column $\wec{x}_j$
which contains no $1$'s above the dividers
is exactly $\wec{b}$. This contradicts the assumption that
$\wec{b}\notin \{\wec{x}_1,\ldots,\wec{x}_k\}$,
showing that $\wec{b}$ is indeed an essential generator.
\cqed

\medskip

\begin{clm}
$\m a^n$ is generated by the tuples whose support 
has size at most $k-1$.
\end{clm}

\cproof
It is enough to show that if $\wec{b}$ has support
$S$ of size $\ell \geq k$, then $\wec{b}$ can be generated
from tuples whose support is properly contained in $S$.
It is enough to prove this in the case where $S = [\ell]$.
For this we must explain how to solve
\begin{equation}\label{dividers2}
\small{
F^{\m a^n}(\wec{x}_1,\ldots,\wec{x}_k)=
F^{\m a^n}\left( 
\left[
\begin{array}{c}
x_{1,1}\\
\vdots\\
\underline{\;\;x_{\ell,1}\;\;}\\
1\\
\vdots\\
1\\
\end{array}
\right],
\ldots,
\left[
\begin{array}{c}
x_{1,k}\\
\vdots\\
\underline{\;\;x_{\ell,k}\;\;}\\
1\\
\vdots\\
1\\
\end{array}
\right]
\right) = 
\left[
\begin{array}{c}
b_1\\
\vdots\\
\underline{\;\;\;b_{\ell}\;\;\;}\\
1\\
\vdots\\
1\\
\end{array}
\right]=\wec{b}
}
\end{equation}
when $\ell \geq k$ in such a way that every 
column contains at least one $1$ above the dividers
and the $i$-th row above the dividers is nearly unanimously
equal to $b_i$. This is easy to do.
Set $x_{1,1} = \cdots = x_{k,k} = 1$, then 
put exactly one $1$ arbitrarily in each 
of rows $k+1$ to $\ell$, then fill
in the remaining entries above the dividers
so that the $i$-th row above the dividers is nearly unanimously
equal to $b_i$. 
\cqed

\medskip

We have established up to this point that the set of tuples
of support of size at most $k-1$ is the unique minimal
generating set for $\m a^n$.
To complete the proof that the partial algebra $\m a$ has the specified
growth rate, observe that the number of tuples with 
support $S$ is $(|A|-1)^{|S|} = q^{|S|}$, so the number of tuples
whose support has size $i$ is $q^i\binom{n}{i}$.
This yields the formula $d_{\m a}(n)=\sum_{i=0}^{k-1} q^i\binom{n}{i}$. 

The one-point completion, $\m a_0$, is a total algebra
with the same growth rate as $\m a$. Let $\m b$
be the expansion of $\m a_0$ by one constant symbol
$1$ whose interpretation is $1^{\m b} = 1$.
The operation $F^{\m b}$ still
satisfies
\[
F^{\m b}(1,x,\ldots,x,x) =
F^{\m b}(x,1,\ldots,x,x) = 
\cdots =
 F^{\m b}(x,x,\ldots,x,1) = x
\]
for each $x\in A_0$, so it is a $1$-pointed
$k$-cube term for $\m b$.

By Theorem~\ref{partial}
$\m a^n$ and $\m a_0^n$ have the same unique minimal generating
set, $G$, which is the set of all tuples with support at most $k-1$; 
this set contains $\hat{1}$.
The algebra $\m b$ must also have a unique minimal generating
set, namely the set obtained from $G$
by deleting $\hat{1}=1^{\m b^n}$.
Thus $d_{\m b}(n) = d_{\m a}(n)-1 = \sum_{i=1}^{k-1} q^i\binom{n}{i}\in O(n^{k-1})$. 
\end{proof}

\subsection{Pointed cube polynomials can be avoided}\label{avoid}

We have established that if $\m a$ is an algebra whose
$d$-function assumes only finite values, and
$\m a$ has a pointed cube term
(or pointed cube polynomial for that matter),
then $d_{\m a}(n)$ is bounded above by a polynomial function of $n$.
The same growth rate can be obtained without 
a pointed cube term (or polynomial), as we show next.

\begin{thm}\label{avoid_thm}
Let $\m a$ be an algebra with $|A|>1$ whose
$d$-function assumes only finite values.
There
is an algebra $\m b$ such that $d_{\m b}(n) = d_{\m a}(n)$ for all $n$, and
\begin{enumerate}
\item[(1)] 
the universe of $\m b$ is $B:=A\cup\{0,z\}$ where
$0\neq z$ and $0, z\notin A$, 
\item[(2)] $\m b$ has a meet semilattice term operation, $\wedge$, 
with respect to which
$\m b$ has height one and least element $0$, and
\item[(3)] if $p(x,\wec{y})$ is an $m$-ary polynomial of 
$\m b$ in which $x$ actually appears and
$p(z,\wec{b}) = z$ for some $\wec{b}\in B^{m-1}$,
then either $p(x,\wec{y}) \approx x$
or else
$p(x,\wec{y}) \approx x\wedge q(\wec{y})$ for some
polynomial $q$ in which $x$ does not appear.
\end{enumerate}
In particular, $\m b$ does not have a pointed cube polynomial.
\end{thm}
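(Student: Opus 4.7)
The plan is to construct $\m b$ explicitly and then verify each of the stated properties. A natural attempt sets $B=A\cup\{0,z\}$, takes $\wedge$ to be the height-one meet semilattice with $0$ as bottom and $A\cup\{z\}$ as an antichain of atoms, and extends each basic operation $F$ of $\m a$ to $F^{\m b}$ by putting $F^{\m b}(\wec{b})=F^{\m a}(\wec{b})$ on $A^m$ and forcing the extension to take values in $A\cup\{0\}$ (for instance the value $0$) on every other input. Items (1) and (2) are immediate from this choice, and the invariant maintained is that no basic operation other than $\wedge$ ever outputs $z$, while $\wedge$ outputs $z$ only when both its arguments equal $z$.

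For (3), I would analyse an arbitrary polynomial $p$ of $\m b$ by putting it into a ``top-level meet normal form'' $p=p_1\wedge\cdots\wedge p_r$ in which no $p_i$ is itself a top meet. Each $p_i$ is then a variable, a constant, or an expression whose outermost symbol is an $F^{\m b}$. Because any $F^{\m b}$-expression evaluates into $A\cup\{0\}$, its meet with a $z$-valued factor in the height-one semilattice collapses to $0$, so the hypothesis $p(z,\wec{b})=z$ rules out any $F^{\m b}$-factor and forces every $p_i$ to be either a variable that evaluates to $z$ or the constant $z$. Hence $p$ is equivalent to a meet of variables and constants, and since $x$ appears this meet includes $x$; absorbing the remaining factors into $q(\wec{y})$ yields $p\approx x\wedge q(\wec{y})$, which reduces to $p\approx x$ when no non-$x$ factor survives.

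The ``no pointed cube polynomial'' consequence then follows by contradiction. Suppose $p$ were such a polynomial, witnessed by a $k\times m$ matrix $M$ with a non-$x$ entry in every column. Substituting $x$ and every other variable in each row identity by $z$ makes the right-hand side $z$, so $p$ takes value $z$ on the resulting tuple, and by (3) we may write $p$ as a meet of variables and constants. Re-reading the row identity $p(\text{row } i)\approx x$ with $p$ in this meet form, and using the fact that a meet of atoms of the height-one semilattice equals a fixed atom identically only when every meet factor is that same atom, we conclude that $p$ contains no constants and that every variable slot of $p$ receives $x$ in every row of $M$. This contradicts the requirement that each column of $M$ contain a non-$x$ symbol.

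The main obstacle is establishing $d_{\m b}(n)=d_{\m a}(n)$. The direction $d_{\m a}(n)\le d_{\m b}(n)$ is handled by intersecting a minimum generating set of $\m b^n$ with $A^n$ and adapting the argument of Theorem~\ref{partial}. The delicate direction is to lift a minimum generating set of $\m a^n$ to one of $\m b^n$ of the same size; tuples of $B^n$ with $z$-entries cannot be produced from $A$-valued generators using only $\wedge$ and the $F^{\m b}$'s, so the construction has to be enriched with further operations whose ranges lie in $\{0,z\}$. Such operations can be chosen so that, viewed as polynomials, the distinguished variable $x$ does not actually appear in them, which keeps them outside the hypothesis of (3). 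The bulk of the technical work lies in designing these auxiliary operations precisely, verifying that they neither introduce a pointed cube polynomial nor cause (3) to fail, and showing that every $z$-containing tuple of $B^n$ is then generable from a minimum $\m a^n$-generating set.
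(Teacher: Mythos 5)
Your verification of items (2)--(3) and the deduction that $\m b$ has no pointed cube polynomial are in the right spirit, but the theorem is not proved because the algebra $\m b$ is never actually constructed: you concede that ``the bulk of the technical work'' of designing the auxiliary operations remains. Worse, the completion strategy you sketch cannot work. If the basic operations are $\wedge$, the extensions $F^{\m b}$ (with ranges in $A\cup\{0\}$), and auxiliary operations with ranges in $\{0,z\}$, then no tuple of $B^n$ having a $z$ in one coordinate and an element of $A$ in another is ever generated from $A$-valued generators: the last operation applied would have to be $\wedge$ (any other choice forces the output, coordinatewise, into $A\cup\{0\}$ or into $\{0,z\}$), and since $\wedge$ returns $z$ in a coordinate only when both arguments are $z$ there and returns $a\in A$ only when both arguments are $a$ there, one of the two arguments would already have to be such a mixed tuple --- an infinite regress. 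So $d_{\m b}(n)>d_{\m a}(n)$ for your $\m b$ no matter how the auxiliary operations are chosen within that constraint. The operations producing the $z$-containing tuples must be allowed to output both $z$ and elements of $A$ on different inputs, and then keeping item (3) true is exactly the delicate point you were hoping to sidestep.

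The paper resolves this by discarding the operations of $\m a$ altogether rather than extending them. Fix a least-size generating set $G(n)=\{\wec{g}_{n,1},\ldots,\wec{g}_{n,d(n)}\}\subseteq A^n$ for each $n$, form the one-point completion $\m a_z$, and for every $\wec{a}\in (A_z)^n$ introduce a $d(n)$-ary \emph{partial} operation $F_{\wec{a}}$ defined only on the $n$ rows of the generator matrix and satisfying $F_{\wec{a}}(\wec{g}_{n,1},\ldots,\wec{g}_{n,d(n)})=\wec{a}$ coordinatewise; these are restrictions of term operations of $\m a_z$, so they add no generating power, yet by design $G(n)$ still generates the $n$-th power, including every $z$-containing tuple. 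A second one-point completion by $0$ then yields $\m b$. The ``Key Fact'' driving item (3) is that $z$ never occurs in any generator tuple, hence never in the domain of any $F_{\wec{a}}$, so every basic operation $(F_{\wec{a}})_0$ sends any input containing $z$ to $0$; the induction you sketched for (3) then goes through. To salvage your approach you would essentially have to import this device, at which point you are reproducing the paper's construction.
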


\begin{proof}
Let $G(n):=\{\wec{g}_{n,1},\ldots,\wec{g}_{n,d(n)}\}$
be a least size generating set for $\m a^n$.
Let $\m a_z$ be the one-point
completion of $\m a$ with the element $z\;(\notin A)$ taken to be the
new point added. According to Theorem~\ref{partial}, the set
$G(n)$ is also a least size generating set for $\m a_z$.
Next, copying the idea of the construction
in Theorem~\ref{big}, for each $\wec{a}\in (A_z)^n$ 
introduce a partial 
operation $F_{\wec{a}}(x_1,\ldots,x_{d(n)})$ 
on $A_z$ 
with the properties that
(i)~the vector equation 
\begin{equation}\label{definition_Fb}
F_{\wec{a}}(\wec{g}_{n,1},\ldots,\wec{g}_{n,d(n)}) = \wec{a}
\end{equation}
holds coordinatewise, 
and (ii)~$F_{\wec{a}}$ is defined only on those tuples
required to make this equation hold.
Let $\overline{\m A}_z$ be the set $A_z$ 
equipped with these partial operations.
The partial algebra $\overline{\m A}_z$ is a reduct of 
$\m a_z$, so in passing from $\m a_z$ to 
$\overline{\m a}_z$ we may have lost
but not gained some generating subsets of powers.
On the other hand,
our choice of the partial operations guarantees that
$G(n)$ still generates the $n$-th power of
$\overline{\m A}_z$. This implies 
that $G(n)$ is a least size generating set for 
$\overline{\m A}_z^n$ for each $n$, and hence that 
the $d$-functions of $\overline{\m A}_z$ and
$\m a_z$ are the same.
Finally, let $\m b=(\overline{\m a}_{z})_{0}$ be the one-point completion of 
$\overline{\m A}_z$ with 
the element $0\;(\notin A\cup\{z\})$ taken to be the
new point added. 
With this choice the universe of $\m B$ is $B=A\cup\{0,z\}$.
Again citing Theorem~\ref{partial}, we see
that $G(n)$ is a least size generating set for
$\m b^n$.

At this point we have that $d_{\m b}(n) = d_{\m a}(n)$ for
all $n$, and also, by construction, that Items (1) and (2) hold.
(Here the meet operation referred to in Item (3) is the 
one introduced in the second one-point completion, the one
used to construct $\m b$ from $\overline{\m a}_z$.)

Let's prove that Item (3) holds.
Our argument depends on a Key Fact:
$z$ does not appear in any coordinate of any
tuple in $G(n)$ for any $n$, hence 
$z$ does not appear in any tuple
in the domain of any partial
operation of the form $F_{\wec{a}}$.
This implies that any basic operation of $\m b$ of the form
$(F_{\wec{a}})_0$ (Definition~\ref{one-point}) 
assigns the value $0$ to any tuple containing a $z$ (or a $0$).

We first prove that if $p(x,\wec{y})$ is an $m$-ary polynomial of $\m b$
in which $x$ appears and $\wec{b}\in B^{m-1}$, then 
$p(z,\wec{b})\in \{0,z\}$.  Arguing by induction
on the complexity of $p$, we need to consider the cases 
were $p$ is a constant, a variable, or of the form 
\begin{equation}\label{inductive_arg}
p(x,\wec{y}) 
= F(p_1(x,\wec{y}) ,\ldots,p_{\ell}(x,\wec{y}))
\end{equation}
where $F = (F_{\wec{a}})_0$ or $F = \wedge$.
The polynomial $p$ cannot be a constant, since $x$ appears in $p$.
If $p$ is a variable, it must be $x$, since $x$ appears in $p$.
In this case $p(z,\wec{b})=z\in \{0,z\}$, as claimed.
If (\ref{inductive_arg}) holds in the case where 
$F = (F_{\wec{a}})_0$, then by induction we have
$p_i(z,\wec{b})\in \{0,z\}$ for at least one $i$, hence by 
the Key Fact we obtain that
\[
p(z,\wec{b}) 
= (F_{\wec{a}})_0(p_1(z,\wec{b}) ,\ldots,p_{\ell}(z,\wec{b})) = 0\in\{0,z\},
\]
as claimed.
If (\ref{inductive_arg}) holds in the case where 
$F = \wedge$, then 
by induction we have
$p_i(z,\wec{b})\in \{0,z\}$ for at least one $i$,
hence $p_i(z,\wec{b})\leq z$. 
It follows that $p(z,\wec{b}) = p_1(z,\wec{b})\wedge p_2(z,\wec{b})\leq z$,
so, since 
$\langle B;\wedge\rangle$ has height one, we get that 
$p(z,\wec{b}) \in \{0,z\}$.

Now we prove Item (3) by induction on the complexity of $p$.
Under the assumptions of Item (3) 
the polynomial $p$ cannot be a constant, since $x$ appears in $p$.
If $p$ is a variable, it must be $x$, since $x$ appears in $p$,
in which case $p(x,\wec{y}) = x$ for all 
$x$ and $\wec{y}$, and Item (3) holds.
Now assume that (\ref{inductive_arg}) holds in the case where 
$F = (F_{\wec{a}})_0$, and fix a tuple $\wec{b}\in B^{m-1}$  
satisfying $p(z,\wec{b})=z$ (the existence of such a $\wec{b}$
is assumed in Item~(3)).
Since $x$ appears in $p$, 
by the induction hypothesis we have
$p_i(x,\wec{y}) = x$ or $x\wedge q_i(\wec{y})$ for some 
$i$ and some polynomial $q_i$.
In either case, $p_i(z,\wec{b})\in \{0,z\}$ by the result
of the preceding paragraph, and this gives us the right hand
equality (the only nontrivial equality) in:
\[
z = p(z,\wec{b}) = 
(F_{\wec{a}})_0(p_1(z,\wec{b}) ,\ldots,p_{\ell}(z,\wec{b})) = 0.
\]
This is a contradiction, which shows that this case cannot occur.
Finally, if 
$
p(x,\wec{y}) = p_1(x,\wec{y}) \wedge p_2(x,\wec{y})
$
and $\wec{b}\in B^{m-1}$ is such that
$p(z,\wec{b}) = z$, then $p_i(z,\wec{b})=z$ for $i=1,2$,
since $z$ is meet irreducible in $\langle B; \wedge\rangle$.
If $x$ appears in both $p_1(x,\wec{y})$ and 
$p_2(x,\wec{y})$, then by induction
both have the form $x$ or $x\wedge q_i(\wec{y})$.
Hence $p(x,\wec{y})$ has the form 
\[
x\wedge x,\quad x\wedge (x\wedge q_2(\wec{y})),\quad
(x\wedge q_1(\wec{y}))\wedge x,\;\quad\textrm{or}\quad\;
(x\wedge q_1(\wec{y}))\wedge (x\wedge q_2(\wec{y})),
\]
each of which has the form $x$ or $x\wedge q(\wec{y})$ for
some polynomial $q$. A similar conclusion is reached
if $x$ appears in one of the polynomials $p_i(x,\wec{y})$ 
but not the other. Hence Item (3) holds.

To complete the proof of the theorem
we argue that $\m b$ does not have a pointed cube
polynomial. By way of contradiction, assume that $p(x_1,\ldots,x_m)$
is such a polynomial and that $M$ is a $k\times m$
matrix of variables and constants such that
$p(M)\approx [x,\ldots,x]^{\mathsf T}$ and every 
column of $M$ contains at least one entry that is not $x$.
In fact, as we have seen before, by substituting 
constants for the variables different from $x$ we may assume that
the entries of $M$ are constants or $x$ and that each column
contains at least one constant. We may also assume that $p$
depends on all of its variables,
hence that each of $x_1,\ldots, x_m$ appears in $p$.

Here are some elementary consequences of our assumptions.
\begin{enumerate}
\item[(a)]
Each row of $M$ must contain at least one $x$, since otherwise
we may derive from the associated cube identity that $x\approx y$
holds in $\m b$. By permuting columns of $M$
(hence reordering the variables of $p$), we
assume that the first entry of the first row is $x$.
\item[(b)] 
The first column of $M$ contains a constant, which cannot
be in the first row. By permuting the later rows of $M$
(hence reordering the cube identities), we assume
that the first entry of the second row of $M$ is a constant.
There is an $x$ somewhere on the second row, by (a),
and permuting the later columns we may assume that it is in the second
position of the second row.
\end{enumerate}
These consequences mean that the first two cube identities
look like $p(x,b_2,\wec{b})\approx x$ and 
$p(c_1,x,\wec{c})\approx x$ where all $b_i, c_j\in B\cup \{x\}$
and $c_1$ is constant.
If we substitute $z$ for each $x$ in these equations we get
$p(z,b_2',\wec{b}') = z$ and 
$p(c_1',z,\wec{c}') = z$, where the primes on
elements and tuples indicate that 
the $x$'s in the string have been replaced 
by $z$'s and constants remain the same. Applying Item (3) 
of this theorem to these equalities 
we obtain that 
\[
p(x_1,x_2,\wec{y}) = x_1\wedge q_1(x_2,\wec{y}) = x_2\wedge q_2(x_1,\wec{y}),
\]
where $x_i$ does not appear in $q_i$. By meeting $p$ with itself
we obtain that
\[
p(x_1,x_2,\wec{y}) = 
(x_1\wedge q_1(x_2,\wec{y}))\wedge(x_2\wedge q_2(x_1,\wec{y})).
\]
Now the second
cube identity may be written 
\[
x=p(c_1,x,\wec{c})=
(c_1\wedge q_1(x,\wec{c}))\wedge(x\wedge q_2(c_1,\wec{c}))\leq c_1.
\]
This implies $x\leq c_1$ for all $x\in B$, and
therefore that the element $c_1\in B$ is the largest
element of $\langle B;\wedge\rangle$. But this semilattice
has no largest element, since it has at least 4 elements
and has height 1.
This contradiction proves that $\m b$ has no 
pointed cube polynomial.
\end{proof}

\subsection{Exponential growth}\label{exp}

If $\m a$ has exponential growth and $\m b$ has arbitrary growth,
then $\m a\times \m b$ has exponential growth
according to Theorem~\ref{basic_estimates}~(2). Hence
it is probably unrealistic to expect any 
meaningful classification of algebras with exponential
growth. This subsection will therefore be limited to 
identifying one property that forces exponential
growth. We will use the property to show that
the variety generated by the 2-element implication algebra,
$\lb \{0,1\}; \to \rb$, contains a chain of finite
algebras $\m a_1\leq \m a_2\leq \cdots$, each one
a subalgebra of the next, where 
$\m a_i$ has logarithmic growth when $i$ is odd
and exponential growth when $i$ is even.

We explore a simple idea:
Suppose that $\m a$ is finite and 
$u$ and $v$ are distinct elements of $A$.
If every element of $\{u, v\}^n$ is an essential
generator of $\m a^n$ for each $n$, then the growth rate
of $\m a$ must be at least $2^n$. A
way to force some tuple $\wec{t}\in \{u, v\}^n$
to be an essential generator of $\m a^n$ is to 
arrange that $A^n\setminus\{\wec{t}\}$
is a subuniverse of $\m a^n$. This 
can be accomplished by imposing 
an irreducibility condition on each coordinate $t$ of $\wec{t}$,
or equivalently by requiring that the complementary
set $A\setminus\{t\}$ behaves like an ideal.
For this to work it is enough that 
$A\setminus\{t\}$ behaves like a 1-sided semigroup-theoretic
ideal, so we introduce a definition that captures
this notion for an arbitrary algebraic signature.

\begin{df}
Let $\sigma=(F,\alpha)$ be an algebraic signature.
I.e., let $F$ be a set (of operation symbols)
and let $\alpha\colon F\to \omega$ be a function
(assigning arity). Let $F_0\subseteq F$ be the set
consisting of those $f\in F$ such that $\alpha(f)>0$.
($F_0$ is the set of nonnullary symbols.)
A \emph{selector}
for $\sigma$ is a function $\phi\colon F_0\to \omega$ 
such that $1\leq \phi(f)\leq \alpha(f)$ for each $f\in F_0$.
($\phi$ selects one of the places of the function
symbol $f$.)

If $\phi$ is a selector for $\sigma$
and $\m a$ is an algebra of signature $\sigma$,
then a \emph{$\phi$-irreducible} subset of $\m a$
is a subset $U\subseteq A$ such that whenever 
$\alpha(f)=n$ and $\phi(f)=i$ 
one has
\[
f^{\m a}(a_1,\ldots,a_n)\in U\Rightarrow a_i\in U.
\]

The complement of a $\phi$-irreducible subset
is called a \emph{$\phi$-ideal}. Explicitly, 
$I\subseteq A$ is a $\phi$-ideal if whenever
$\alpha(f)=n$,
$\phi(f)=i$ and $a_i\in I$, then 
$f^{\m a}(a_1,\ldots,a_n)\in I$.
\end{df}

In this terminology, 
a left ideal of a semigroup
with multiplication represented by the symbol $m$
would be a $\phi$-ideal for the function
$\phi\colon \{m\}\to \{1,2\}\colon m\mapsto 2$, while 
a right ideal would be a $\phi$-ideal for the function
$\phi\colon \{m\}\to \{1,2\}\colon m\mapsto 1$.

\begin{thm}\label{ideal}
Let $\m a$ be an algebra of signature $\sigma$
and let $\phi$ be a selector for $\sigma$.
If $\m a$ is the union of finitely many proper $\phi$-ideals,
then $d_{\m a}(n)\geq 2^n$.
\end{thm}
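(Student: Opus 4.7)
The plan is to reduce to the case where $\m a$ is a union of exactly two proper $\phi$-ideals, and then to exhibit $2^n$ pairwise disjoint proper $\phi$-ideals of $\m a^n$ that any generating set of $\m a^n$ must avoid. For the reduction I would pass from the given cover $\m a = I_1 \cup \cdots \cup I_k$ by proper $\phi$-ideals to a minimal sub-cover. A minimal sub-cover of size one would force its unique member to equal $\m a$, contradicting properness, so the minimal sub-cover has at least two members. After relabeling, set $I = I_1$ and $J = I_2 \cup \cdots \cup I_k$; both are $\phi$-ideals, since unions of $\phi$-ideals are $\phi$-ideals (immediate from the definition), and $I \cup J = \m a$. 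Minimality prevents $J$ from equalling $\m a$ (otherwise $I_1$ would be redundant), so $J$ is proper. Hence we may assume $\m a = I_1 \cup I_2$ with $I_1, I_2$ proper.

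Set $U_j = A \setminus I_j$. Both $U_j$ are nonempty, and the key identity $U_1 \cap U_2 = A \setminus (I_1 \cup I_2) = \emptyset$ is immediate. For each $\sigma \colon [n] \to \{1,2\}$ I define
\[
J_\sigma = \bigcup_{\ell=1}^n \{\wec{a} \in A^n : a_\ell \in I_{\sigma(\ell)}\}.
\]
Each piece $\{\wec{a} : a_\ell \in I_{\sigma(\ell)}\}$ is a $\phi$-ideal of $\m a^n$ by a one-line coordinatewise check, and unions of $\phi$-ideals are $\phi$-ideals, so $J_\sigma$ is a $\phi$-ideal of $\m a^n$. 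In particular $J_\sigma$ is a subuniverse of $\m a^n$ (if all inputs to an operation lie in a $\phi$-ideal, the $\phi$-selected input in particular does, so the output does), and its complement $\prod_{\ell=1}^n U_{\sigma(\ell)}$ is nonempty, so $J_\sigma$ is proper.

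To conclude, any generating set $G$ of $\m a^n$ must meet every complement $\prod_\ell U_{\sigma(\ell)}$, since otherwise $G \subseteq J_\sigma$ gives $\lb G\rb \subseteq J_\sigma \subsetneq A^n$. For distinct $\sigma, \sigma'$, pick $\ell$ with $\sigma(\ell) \neq \sigma'(\ell)$; the $\ell$th factor of the intersection $\prod_\ell U_{\sigma(\ell)} \cap \prod_\ell U_{\sigma'(\ell)}$ then becomes $U_1 \cap U_2 = \emptyset$, so the $2^n$ product sets are pairwise disjoint. Hence $|G| \geq 2^n$, yielding $d_{\m a}(n) \geq 2^n$. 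The only verifications required are that unions of $\phi$-ideals are $\phi$-ideals and that the ``fiber'' sets $\{\wec{a} : a_\ell \in I\}$ are $\phi$-ideals of $\m a^n$ when $I$ is a $\phi$-ideal of $\m a$, both of which follow directly from the definition; I do not anticipate any real obstacle beyond the reduction step and the disjointness observation that relies essentially on having cut $k$ down to $2$.
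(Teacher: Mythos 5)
Your proof is correct and follows essentially the same route as the paper's: reduce to a cover by two proper $\phi$-ideals, observe that each of the $2^n$ products of the complements is $\phi$-irreducible (equivalently, that its complement $J_\sigma$ is a $\phi$-ideal, hence a proper subuniverse of $\m a^n$), and conclude from pairwise disjointness of the products. The only differences are cosmetic --- the paper works with the irreducible product sets directly while you work with their complementary ideals --- and your minimal-subcover argument nicely supplies a detail the paper leaves implicit when it asserts that the cover can be cut down to two proper ideals.
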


\begin{proof}
The union of $\phi$-ideals is again a $\phi$-ideal,
so if $\m a$ is the union of $k\geq 2$ proper $\phi$-ideals
then it can be expressed as the union $I\cup J$ of 2
proper $\phi$-ideals. The complements $I':=A\setminus I$
and $J':=A\setminus J$ are disjoint $\phi$-irreducible sets.
Any product $T:=X_1\times \cdots \times X_n$, with
$X_i=I'$ or $J'$ for all $i$,
is a $\phi$-irreducible subset of $A^n$.
Each such set must contain at least one element of any 
generating set, since the $\phi$-irreducibility
of $T$ implies that $A^n\setminus T$ is a subuniverse
of $\m a^n$. Since there are $2^n$ products
of the form $X_1\times \cdots \times X_n$ with
$X_i=I'$ or $J'$, and they are pairwise disjoint,
any generating set for $\m a^n$ must contain 
at least $2^n$ elements.
\end{proof}

\begin{exmp}
In this example, $\m 2$ is the 2-element Boolean algebra
and ${\m 2}^{\circ} = \lb \{0,1\}; \to\rb$ is the reduct
of $\m 2$ to the operation $x\to y = x'\vee y$.
The variety $\vr V$ generated by $\m 2^{\circ}$
is called the variety of implication algebras.
This variety is congruence
distributive and has $\m 2^{\circ}$ as its unique subdirectly
irreducible member. Each finite algebra in $\vr V$
may be viewed as an order filter in a finite Boolean algebra:
if $\m a\in {\vr V}_{\textrm{fin}}$, then an
irredundant subdirect representation $\m a\leq (\m 2^{\circ})^k$
may be viewed as a representation of $\m a$ as a subset
of $\m 2^k$ closed under $\to$; such subsets of $\m 2^k$
are order filters.

Considering an algebra $\m a\in {\vr V}_{\textrm{fin}}$
to be an order filter in $\m 2^k$, each 
order filter contained within $\m a$
is a left ideal in $\m a$ with respect to the operation $\to$.
By Theorem~\ref{ideal},
if $\m a$ is the union of its proper order filters,
its growth rate is exponential.
This case must occur unless $\m a$ itself
is a principal order filter in $\m 2^k$. Since we represented
$\m a$ irredundantly, $\m a$ is a principal order
filter in $\m 2^k$ only when it is the improper filter,
i.e., $\m a = (2^{\circ})^k$. In this situation 
$\m a$ is polynomially equivalent to the Boolean
algebra $\m 2^k$. It follows from Theorem~\ref{basic_estimates}~(1)
and the fact that $\m 2$ is primal that $\m 2^k$
has logarithmic growth rate.
In summary, a finite implication algebra has 
logarithmic growth rate if it has a least element
and has exponential growth rate otherwise.

Now, it is easy to produce a chain of implication
algebras $\m a_1\leq \m a_2\leq \cdots$, each one
a subalgebra of the next, where 
$\m a_i$ has logarithmic growth when $i$ is odd
and exponential growth when $i$ is even.
One simply chooses larger and larger 
Boolean order filters which are
principal only when $i$ is odd.
The following figure shows how the chain might begin.
\bigskip

\begin{center}
\vbox{
\begin{picture}(300,90)
\setlength{\unitlength}{1mm}

\put(0,20){\circle*{1}}
\put(0,30){\circle*{1}}

\put(0,20){\line(0,1){10}}

\put(20,20){\circle*{1}}
\put(30,20){\circle*{1}}
\put(30,30){\circle*{1}}

\put(20,20){\line(1,1){10}}
\put(30,20){\line(0,1){10}}

\put(50,10){\circle*{1}}
\put(50,20){\circle*{1}}
\put(60,20){\circle*{1}}
\put(60,30){\circle*{1}}

\put(50,10){\line(1,1){10}}
\put(50,10){\line(0,1){10}}
\put(50,20){\line(1,1){10}}
\put(60,20){\line(0,1){10}}

\put(80,10){\circle*{1}}
\put(80,20){\circle*{1}}
\put(90,20){\circle*{1}}
\put(90,30){\circle*{1}}
\put(100,20){\circle*{1}}

\put(80,10){\line(1,1){10}}
\put(80,10){\line(0,1){10}}
\put(80,20){\line(1,1){10}}
\put(90,20){\line(0,1){10}}
\put(100,20){\line(-1,1){10}}

\put(-2,2){$\m a_1$}
\put(23,2){$\m a_2$}
\put(53,2){$\m a_3$}
\put(88,2){$\m a_4$}

\put(11,2){$\leq$}
\put(38,2){$\leq$}
\put(71,2){$\leq$}
\put(103,2){$\cdots$}

\end{picture}
\medskip

{{\small\sc Figure: 
A chain of implication algebras.}}
}
\end{center}
\end{exmp}

\section{Problems}\label{problems}

In this paper, we have filled in
one gap in knowledge about
the spectrum of possible growth rates of finite algebras
by producing examples 
with superlinear polynomial growth rates. There is an interesting gap
in knowledge that remains between logarithmic and linear growth rates.

\medskip
\noindent
{\bf Problem 6.1.}
Is there a finite algebra $\m a$ where
$d_{\m a}(n)\notin \Omega(n)$ and
$d_{\m a}(n)\notin O(\log(n))$? 

\medskip

A special case that might be tractable is the 
following.

\medskip
\noindent
{\bf Problem 6.2.}
Is there a 2-element partial algebra $\m a$ where
$d_{\m a}(n)\notin \Omega(n)$ and
$d_{\m a}(n)\notin O(\log(n))$? 

\medskip

We know that no finite algebra with a $0$-pointed cube term can
have growth rate between logarithmic and linear,
but do not know the situation for pointed 
cube terms. The following seems to be the most interesting
special case.

\medskip
\noindent
{\bf Problem 6.3.}
Is it true that a finite algebra with a $2$-sided
unit for some binary term has logarithmic or linear growth?

\medskip

There is also a possible gap near the exponential end
of the spectrum.

\medskip
\noindent
{\bf Problem 6.4.}
Is there a finite algebra $\m a$ where
$d_{\m a}(n)\notin 2^{\Omega(n)}$ and
$d_{\m a}(n)\notin O(n^k)$ for any $k$? 

\bibliographystyle{plain}

\end{document}